\documentclass[11pt,letterpaper]{amsart}

\usepackage{amssymb, amsmath, amsfonts, latexsym}
\usepackage{enumerate}

\usepackage{color}
\usepackage{diagbox}
\newtheorem{thm}{Theorem}[]
\newtheorem{lem}{Lemma}[section]

\newtheorem{prop}{Proposition}[]

\newtheorem{rmk}{Remark}[section]
\theoremstyle{definition}

\usepackage{graphicx}
\usepackage{subcaption}
\numberwithin{equation}{section} \theoremstyle{remark}

\title[Universal edge asymptotics for Coulomb gases]{Precise universal edge asymptotics for planar $\beta=2$ Coulomb gases with radial external fields}

\author{Y\MakeLowercase{utao} M\MakeLowercase{a} and X\MakeLowercase{ujia} M\MakeLowercase{eng}}
\address{School of Mathematical Sciences $\&$ Laboratory  of Mathematics and Complex Systems of Ministry of Education, Beijing Normal University, 100875 Beijing, China.} 
\thanks{The research of Yutao Ma was supported in part by NSFC 12571149 and 985 Projects.}
\email{mayt@bnu.edu.cn, 202321130122@mail.bnu.edu.cn} 

\begin{document}
\maketitle

\begin{abstract}
We investigate the extremal statistics of planar $\beta=2$ Coulomb gases with radial external fields. For the rightmost eigenvalue and the spectral radius, we establish sharp Berry--Esseen bounds for their convergence to the Gumbel distribution, with explicit rates
\[
\frac{25\log\log n}{4e\log n}
\quad\text{and}\quad
\frac{2\log\log n}{e\log n},
\]
respectively. In addition, we derive sharp asymptotic equivalences for the large and moderate deviations of both statistics across all relevant scales. Analogous results hold for the smallest modulus. 
\end{abstract}

\textbf{Keywords}:  Coulomb gas; determinantal point process; extreme value statistics; Gumbel distribution; Berry--Esseen bound; large deviations. 

\textbf{Mathematics Subject Classification (2020):} 60B20, 60F10, 60G70.

\section{Introduction}

The Coulomb gas model for $n$ points on the plane with external potential $Q: \mathbb{C}\to\mathbb{R}$ is the probability measure
\begin{equation}\label{1.1}
\frac{1}{Z_n}\prod_{j = 1}^{n}e^{-\frac{\beta n}{2}Q(z_{j})}\prod_{1\leq j< k\leq n}|z_{j} - z_{k}|^{\beta} d^2 z_j, \qquad z_1, \cdots, z_n\in\mathbb{C}, 
\end{equation}
where $Z_n$ is the normalization constant, $d^2 z$ is the two-dimensional Lebesgue measure, and $\beta>0$ is the inverse temperature. Under suitable assumptions on $Q$, the points accumulate with high probability on the support $S$ of an equilibrium measure $\mu$; see \cite{Hedenmalm2013}.

For limit theorems of the empirical measure of two-dimensional Coulomb gases, we refer to \cite{benarous1998, bordenave2012, chafai2013, hardy2012, guionnet2012} and the review \cite{Chafaii21}. 
In the special case $Q(z)=|z|^2$, which corresponds to the complex Ginibre ensemble, the edge behavior and extremal statistics have been extensively studied; see \cite{Cipolloni22Directional, HuMa2026, MaMeng26, Rider2003, Rider2004, Rider14}. 
Separately, Auster \cite{Auster} investigated the minimum and maximum modulus of the induced Ginibre ensembles. 
For general confining potentials $V$, the spectral radius was first treated in \cite{chafai14}, while hard-wall effects and intermediate deviation regimes were studied in \cite{Seo2022} and \cite{Lacroix2018}, respectively. 
Hole probabilities are addressed in \cite{AdhikariR, Adhikari18, Charlier26}. 
For the general potential-theoretic and random-matrix background, we refer to \cite{ABDF11, AGZ10, Dei99, DG09, For10, Hedenmalm2013, saff1997}. 

Of particular relevance to the present work is the seminal result of Chafa\"{\i} and P\'ech\'e \cite{chafai14}. 
They established that, for the $\beta=2$ Coulomb gas with a radially symmetric confining potential $V$ satisfying suitable assumptions, the spectral radius (maximum modulus) converges to the Gumbel distribution after appropriate centering and scaling. 
Their proof relies crucially on Kostlan's observation \cite{kostlan1992, Rider2003}, which decouples the radial parts into independent random variables and thereby reduces the extremal problem to a tractable one-dimensional analysis. More precisely, they showed that
$$
\max_{1\le i\le n}|\sigma_i| \stackrel{d}{=} \max_{1\le i\le n} Y_i,
$$
where $\{Y_i\}_{1\le i\le n}$ is an independent sequence with density proportional to $e^{-nV(r)} r^{2i-1}\mathbf{1}_{r>0}$. 
See also \cite{EbrahimiZohren}, who derived the Gumbel limit for both the spectral radius and the minimum modulus under slightly milder assumptions on $V,$  again via Kostlan's observation. However, Kostlan's observation approach fails for the rightmost eigenvalue (the maximum of the real parts).

Recently, \cite{MaMeng26} established the exact convergence rate of the spectral radius for the complex Ginibre ensemble, again leveraging Kostlan's observation. This was extended in \cite{HuMa2026} to both the spectral radius and the rightmost eigenvalue for real and complex Ginibre ensembles, using a first-order approximation of the integral operator restricted to suitable measurable subsets and exploiting the underlying determinantal (or Pfaffian) structure. Building on the universality result of \cite{Cipolloni23Universality}, the convergence rates were shown to be universal for complex random matrices with i.i.d.~entries under moment conditions. Parallel results for products of complex Ginibre ensembles and large chiral non-Hermitian matrices appear in \cite{MaMeng26a, MaMeng26b, MaWang26}. 

In this paper, we go beyond \cite{chafai14} by establishing sharp Berry--Esseen bounds for the Gumbel approximation (Theorem \ref{main}) and by deriving complete large and moderate deviation asymptotics for both the spectral radius and the rightmost eigenvalue (Theorems \ref{thmrealpart} and \ref{thmsp}) under modified assumptions on $V.$ We also establish similar results for the minimum modulus under the same conditions on $V$ in \cite{EbrahimiZohren}.
Our approach is based on the gap probability of the underlying determinantal point process, combined with a saddle-point analysis of the relevant Fredholm determinants, which allows us to extract precise asymptotic expansions across the full spectrum of deviation scales considered.

In the following, we always suppose that $\beta=2$ and $Q$ is radially symmetric, i.e.
\[
Q(z)=V(|z|),
\]
where $V:\mathbb{R}_{+}\to \mathbb{R}.$  
Then the joint density of $(\sigma_1,\dots,\sigma_n)$ is proportional to
\begin{equation}\label{jointpdf}
\prod_{i=1}^n e^{-nV(|z_i|)} \prod_{j<k} |z_j - z_k|^2.
\end{equation}
We assume that $V$ satisfies the following standard single-well conditions:
\begin{enumerate} 
\item[({\bf H}$_1$)] $V\in C^3(0, +\infty).$ 
\item[({\bf H}$_2$)] The equation $V'(t_u)=(2-u)/t_u$ has a unique solution $t_u\in(0,\infty)$ for each $u\in[0,2)$. 
\item[({\bf H}$_3$)] For each $u\in[0,2)$, the function $V(x)-(2-u)\log x$ has a unique global minimizer on $(0,\infty)$. 
\end{enumerate} 
Condition ({\bf H}$_2$) uniquely defines the inner and outer edges of the equilibrium measure's support, while ({\bf H}$_3$) ensures the stability and strict convexity of the associated variational problem. 
Let $t_0$ be the unique positive solution to $V'(t_0)=2/t_0$, and set
\[
\alpha := V''(t_0) + \frac{2}{t_0^2}.
\]
Under ({\bf H}$_1$)--({\bf H}$_3$), we have $\alpha>0$: indeed, defining $\phi(x)=xV'(x)$, we have
\[
(V(x)-2\log x)'=\frac{1}{x}(\phi(x)-2).
\]
The sign change of $(V(x)-2\log x)'$ at the global minimum forces $\phi$ to cross $2$ from below to above, hence $\phi'(t_0)>0$. Therefore,
\[
\alpha = V''(t_0)+\frac{2}{t_0^2}
= \frac{1}{t_0}\bigl(t_0V''(t_0)+V'(t_0)\bigr)
= \frac{\phi'(t_0)}{t_0} >0.
\]

\subsection{Results for the rightmost eigenvalue and the spectral radius}

Given
\[
\gamma_n=\frac{1}{2}\log\frac{n \alpha t_0^2}{8 \pi^4 (\log n)^5},
\qquad
\widetilde{\gamma}_n=\log \frac{n\alpha t_0^2}{8\pi (\log n)^2},
\]
we define the normalized statistics
\[
X_n:=\sqrt{\alpha\gamma_n n}(\max_{1\le i\le n} \Re \sigma_i-t_0-\frac{\sqrt{\gamma_n}}{\sqrt{\alpha n}}),
\qquad
\widetilde{X}_n:=\sqrt{\alpha\widetilde{\gamma}_n n}(\max_{1\le i\le n}|\sigma_i|-t_0-\frac{\sqrt{\widetilde{\gamma}_n}}{\sqrt{\alpha  n}}).
\]

We note that the centering and scaling for $\widetilde{X}_n$ in \cite{chafai14}, although also expressed in terms of \(t_0\) and \(\alpha\), are in a different form from ours; in contrast, the normalization in \cite{EbrahimiZohren} for $\widetilde{X}_n$ coincides with ours.

Our first main result gives the sharp convergence rates to the Gumbel law.

\begin{thm}\label{main} 	
Suppose $V$ satisfies $(${\bf H}$_1)$--$(${\bf H}$_3)$. Let $(\sigma_1,\dots,\sigma_n)$ have joint density \eqref{jointpdf}, and let $X_n, \widetilde{X}_n$ be as above. Then
\[
\sup_{t\in\mathbb{R}}|\mathbb{P}(X_n\le t)-e^{-e^{-t}}|
\sim \frac{25\log\log n}{4e\log n}\]
and
\[
\sup_{t\in\mathbb{R}}|\mathbb{P}(\widetilde{X}_n\le t)-e^{-e^{-t}}|
\sim \frac{2\log\log n}{e\log n}
\]
for $n$ large enough.  Here, $\sim$ means that the ratio of the left-hand side to the right-hand side tends to $1$ as $n \to \infty$.
\end{thm}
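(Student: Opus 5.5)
We work with the determinantal structure rather than Kostlan's observation, since the latter does not handle the rightmost eigenvalue. The point process with density \eqref{jointpdf} has kernel
\[
K_n(z,w)=\sum_{k=0}^{n-1}\frac{(z\bar w)^k}{h_k}e^{-\frac n2(V(|z|)+V(|w|))},\qquad h_k=2\pi\int_0^\infty r^{2k+1}e^{-nV(r)}\,dr .
\]
For $A_s=\{\Re z>s\}$ (respectively $\{|z|>s\}$) we have $\mathbb{P}(\max\Re\sigma_i\le s)=\det(I-K_n\mathbf 1_{A_s})$. The plan is to show that, for $s=s_n(t)$ the threshold corresponding to $X_n\le t$ (resp.\ $\widetilde X_n\le t$),
\[
\log\mathbb{P}(X_n\le t)=-\operatorname{tr}(K_n\mathbf 1_{A_s})+O\bigl(\operatorname{tr}(K_n\mathbf 1_{A_s})^2\text{-type terms}\bigr),
\]
and that the second-order terms are $o(\log\log n/\log n)$ uniformly in $t$ in the relevant window. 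The idea is the first-order approximation used in \cite{HuMa2026}: expand $\log\det(I-K)=-\sum_m \operatorname{tr}(K^m)/m$, bound $\operatorname{tr}((K\mathbf 1_{A})^m)\le \operatorname{tr}(K\mathbf 1_A)\,\|K\mathbf 1_A\|^{m-1}$, and show that the operator norm on $A_s$ is $O(1/\sqrt{\log n})$, since eigenfunctions are spread over an arc of the edge. For the radial case the operator is diagonal in angular modes, so $\det(I-K\mathbf 1_A)=\prod_k(1-p_k(s))$ with $p_k(s)=h_k^{-1}2\pi\int_s^\infty r^{2k+1}e^{-nV}\,dr$. This is exactly Kostlan's product, and each $p_k$ is small in the relevant regime.

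The main computational step is a uniform Laplace/saddle-point asymptotic for the one-point density near the edge. Write $k=n-j$ with $u=j/n$. By (H$_2$) and (H$_3$), the mode $t_u$ of $r^{2k+1}e^{-nV(r)}$ lies inside $t_0$, and for small $u$ one has $t_u\approx t_0-u/(\alpha t_0)$. Expanding the exponent to second order using $\alpha$ (with $V\in C^3$ controlling the cubic remainder) gives the complementary Gaussian tail
\[
p_{n-j}(s)\approx \Phi^c\bigl(\sqrt{\alpha n}\,(s-t_0)+j/(t_0\sqrt{\alpha n})\bigr).
\]
Summing over $j$ yields $\sum_k p_k(s)\approx \frac{t_0\sqrt{\alpha n}}{\sqrt{2\pi}\,x^2}e^{-x^2/2}\,(1+c_1/x^2+\cdots)$ with $x=\sqrt{\alpha n}(s-t_0)$. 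For the real part, the density of $\Re z$ near $t_0$ is obtained by integrating the edge density over the angle; the curvature of the boundary circle contributes a further factor $\propto (\sqrt n\,x)^{-1/2}$. This produces the $(\log n)^{5}$ in $\gamma_n$ in place of the $(\log n)^2$ in $\widetilde\gamma_n$.

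Next we substitute $x^2/2=\gamma_n+t+o(1)$, i.e.\ $x=\sqrt{2\gamma_n}\,(1+t/(2\gamma_n))$ to first order, into the trace and expand. This gives
\[
\operatorname{tr}=e^{-t}\bigl(1+\epsilon_n(t)\bigr),\qquad \epsilon_n(t)=\frac{a\,\log\log n+b(t)}{\log n}+o\!\left(\frac{\log\log n}{\log n}\right).
\]
The leading correction comes from the mismatch between the prescribed centering ($\gamma_n$ contains $\log\log n$) and the exact solution of $x^2/2+\tfrac{5}{2}\log x=\dots$. Taking the supremum of $|e^{-e^{-t}}\epsilon_n(t)e^{-t}|$ over $t$, with the maximum of $e^{-t}e^{-e^{-t}}$ equal to $1/e$ at $t=0$, gives a constant over $e$ times $\log\log n/\log n$. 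Tracking the constants should give $a=25/4$ for $X_n$ and $a=2$ for $\widetilde X_n$, from the squares of the exponents $5/2$ and $2$ divided appropriately. Finally, tails with $|t|\ge C\log\log n$ are handled by monotonicity together with crude bounds.

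The main obstacle is the rightmost-eigenvalue case. There the restricted operator is not diagonal, and we must show both that the higher traces are negligible at relative order $o(\log\log n/\log n)$ uniformly in $t$, and that the two-dimensional Laplace integral over the half-plane has the precise prefactor. For the first, the approach is to bound $\|K_n\mathbf 1_{A_s}\|$ through a Hilbert--Schmidt estimate localized to the angular window $|\theta|\lesssim n^{-1/4}$. For the second, we will carry out an explicit Gaussian integration in the radial and angular variables, keeping $C^3$ remainder terms.
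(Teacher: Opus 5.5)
Your architecture is the paper's: gap probability as a Fredholm determinant, approximation of $\det(\mathrm{I}-\mathbb{K}_n|_A)$ by $e^{-\operatorname{Tr}}$, level-by-level Laplace asymptotics summed as a geometric series over $j\lesssim\sqrt{n\log n}$, and a Hilbert--Schmidt bound for the non-diagonal half-plane operator. The paper uses $|\det(\mathrm{I}-K)-e^{-\operatorname{Tr}K}|\le\|K\|_2\exp(\frac12(\|K\|_2+1)^2-\operatorname{Tr}K)$ instead of your log-series; that difference is inessential.

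\textbf{The gap is the size of the error you aim for.} You correctly say the second-order terms must be $o(\log\log n/\log n)$, but your mechanism does not deliver this. From $\operatorname{tr}((K\mathbf 1_A)^m)\le\operatorname{tr}(K\mathbf 1_A)\|K\mathbf 1_A\|^{m-1}$ you get $\log\mathbb{P}(X_n\le t)=-\operatorname{tr}(K\mathbf 1_A)+O(e^{-t}\|K\mathbf 1_A\|)$. So you need $\|K\mathbf 1_A\|=o(\log\log n/\log n)$ uniformly on the window. The bound $O(1/\sqrt{\log n})$ you announce gives an error of order $1/\sqrt{\log n}\gg\log\log n/\log n$. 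That swamps exactly the term $e^{-e^{-t}}e^{-t}\,a\log\log n/\log n$ which determines the constants $25/4$ and $2$.

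What is true, and what has to be proved, is a polynomial bound. For the radius it follows from your own diagonalization: the eigenvalues are the $p_k$, and $\max_k p_k\asymp e^{-t}\sqrt{\log n/n}$, so $\sum_k p_k^2\lesssim e^{-2t}\sqrt{\log n/n}$. For the half-plane the paper proves $\|\mathbb{K}_n|_{A(t)}\|_2\lesssim e^{-t}(\log n)^{1/4}n^{-1/20}$ as follows:
\begin{itemize}
\item It writes $\|\mathbb{K}_n|_{A(t)}\|_2^2=\sum_{j,k}M_{j,k}^2$ in the monomial basis.
\item Entries with $|j-k|\le n^{2/5}$ are bounded by Cauchy--Schwarz, $M_{j,k}^2\le Q_jQ_k$, using $Q_k\lesssim e^{-t}\sqrt{\log n/n}$.
\item The remaining entries use $\int_{r\cos\theta>a}\cos((k-j)\theta)\,d\theta=2\sin((k-j)\arccos(a/r))/(k-j)$, which gives $1/|k-j|$ decay.
\end{itemize}
Your angular-window heuristic has to be turned into an estimate of this quantitative strength.

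Two secondary points.

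\textbf{Constants.} The statement fixes $\gamma_n,\widetilde\gamma_n$ exactly, so an $O(1)$ multiplicative error in the trace makes the supremum of order one. Your sketch has two such errors.
\begin{itemize}
\item The level $r^{2(n-j)+1}e^{-nV(r)}$ peaks at $t_0-\frac{2j-1}{n\alpha t_0}+O(j^2/n^2)$; this is the paper's $u=(2j-1)/n$ in $\varphi_u=V-(2-u)\log r$. So the Gaussian argument is $x+2j/(t_0\sqrt{\alpha n})$, and $\sum_k p_k\approx\frac{t_0\sqrt{\alpha n}}{2\sqrt{2\pi}\,x^2}e^{-x^2/2}$, which is half your expression.
\item The theorem's normalization corresponds to $x=\sqrt{\widetilde\gamma_n}+t/\sqrt{\widetilde\gamma_n}$, i.e.\ $x^2/2=\widetilde\gamma_n/2+t+t^2/(2\widetilde\gamma_n)$, not $\widetilde\gamma_n+t$.
\end{itemize}
With these corrections one gets $\operatorname{Tr}=e^{-t}\frac{\log n}{\widetilde\gamma_n}(1+\frac{t}{\widetilde\gamma_n})^{-2}e^{-t^2/(2\widetilde\gamma_n)}(1+O(\frac{1}{\log n}))$ for the radius, and $e^{-t}(\frac{\log n}{2\gamma_n})^{5/4}(\cdots)$ for the real part. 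Each constant is the power of the prefactor times the coefficient of $\log\log n$ in $\log n-\widetilde\gamma_n$, respectively $\log n-2\gamma_n$. That gives $1\cdot 2=2$ and $\frac54\cdot 5=\frac{25}{4}$.

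\textbf{Large $j$.} The Laplace expansion is uniform only for $j\lesssim\sqrt{n\log n}$. To discard $j>j_n$ the paper uses that $p_{n-j}$, and its half-plane analogue, is decreasing in $j$, via stochastic ordering of the radial variables. The tail is then at most $n\,p_{n-j_n}\ll e^{-t}(\log n)^{-1}$. Your sketch needs this step as well.
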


Next, we present the large and moderate deviation asymptotics for the rightmost eigenvalue.

\begin{thm}\label{thmrealpart}
Under the same assumptions as Theorem \ref{main}, we have the following asymptotic equivalences, classified according to the deviation scale.

(1) Large deviations (fixed displacement). For any fixed $t>t_0$,
\[
\mathbb{P}\bigl( \max_{1\le i\le n}\Re \sigma_i \ge t \bigr)
\sim
\frac{\sqrt{\alpha }\, t^2\, t_0 \exp(-nV(t)+nV(t_0))}{2\pi n(t^2-t_0^2) (tV'(t)-2)^{3/2}}
(\frac{t}{t_0})^{2n}.
\]

(2) Critical Gumbel scale and its refined correction.

(a) Pure Gumbel scale (fixed $t>1$):
\[
\mathbb{P}\bigl( \max_{1\le i\le n}\Re \sigma_i \ge t_0+\frac{t\sqrt{\gamma_n}}{\sqrt{\alpha n}} \bigr)
\sim
t^{-5/2}(\frac{8\pi^4 (\log n)^5}{\alpha n t_0^2 })^{\frac{t^2-1}{4}}.
\]

(b) Fine correction beyond the Gumbel scale ($1\ll v_n\ll \log n$):
\[
\mathbb{P}\bigl( \max_{1\le i\le n}\Re \sigma_i \ge t_0+\frac{\sqrt{\gamma_n}}{\sqrt{\alpha n}}+\frac{v_n}{\sqrt{\alpha n \gamma_n}} \bigr)
\sim
\exp\bigl(-v_n-\frac{v_n^2}{2\gamma_n}\bigr).
\]

(3) Moderate deviations on scale $d_n$. Given $d_n>0$:

\[\aligned
&\quad\mathbb{P}\bigl(\max_{1\le i\le n}\Re\sigma_i \ge t_0 + d_n \bigr)\\
&
\sim
\begin{cases}
\displaystyle \frac{\sqrt{t_0}}{4\pi \alpha n d_n^{5/2}}
\exp(-\frac{\alpha  n d_n^2}{2}), & \frac{\sqrt{\log n}}{\sqrt{n}} \ll d_n \ll n^{-\frac13},\\[1.2em]
\displaystyle \frac{\sqrt{t_0} \exp(-nV(t_0+d_n)+nV(t_0))}{4\pi \alpha n d_n^{5/2}}
(\frac{t_0+ d_n}{t_0})^{2n}, & n^{-\frac13}\lesssim d_n\ll 1.
\end{cases}
\endaligned\]
\end{thm}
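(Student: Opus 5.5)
The argument runs through the determinantal structure of \eqref{jointpdf}. The eigenvalues form a determinantal point process with kernel
\[
K_n(z,w)=\sum_{k=0}^{n-1}\frac{(z\bar w)^k}{h_k}e^{-\frac n2(V(|z|)+V(|w|))},
\qquad h_k=2\pi\int_0^\infty r^{2k+1}e^{-nV(r)}\,dr.
\]
Write $A_s=\{z:\Re z\ge s\}$ with $s=t_0+d$. Then
\[
\mathbb P\Bigl(\max_i\Re\sigma_i< s\Bigr)=\det(I-K_n|_{A_s}).
\]
For the upper-tail probabilities in Theorem \ref{thmrealpart} it suffices to show that $\mathbb P(\max\Re\sigma_i\ge s)\sim \int_{A_s}K_n(z,z)\,d^2z$ whenever this first moment tends to $0$. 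This follows from the inequalities
\[
\int_{A_s} K_n-\tfrac12\iint_{A_s^2}\det\bigl[K_n(z_i,z_j)\bigr]_{i,j\le 2}\ \le\ \mathbb P\ \le\ \int_{A_s}K_n,
\]
together with the bound $\iint\det\le(\int K_n)^2$, since $\det[K(z_i,z_j)]\le K(z_1,z_1)K(z_2,z_2)$ for a positive kernel. So the core task is a precise asymptotic evaluation of the one-point mass $\int_{A_s}K_n(z,z)\,d^2z$, uniformly over the three regimes of $d$.

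\textbf{Step 1: Laplace asymptotics of $h_k$.} For $k=n-j$, I would use $(\mathbf H_1)$--$(\mathbf H_3)$ to get
\[
h_{n-j}\approx 2\pi\sqrt{\frac{2\pi}{n\alpha_u}}\,t_u^{2(n-j)+1}e^{-nV(t_u)},\qquad u=2j/n,
\]
where $t_u$ is the minimizer of $V(x)-(2-u)\log x$ and $\alpha_u$ is the corresponding second derivative. Expanding near $u=0$ gives $t_u\approx t_0-\frac{u}{\alpha t_0}$.

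\textbf{Step 2: the radial density.} Write the density as
\[
K_n(z,z)=\sum_j \frac{r^{2(n-j)}e^{-nV(r)}}{h_{n-j}},\qquad r=|z|.
\]
For $r>t_0$ the sum is dominated by $j$ small. Summing the resulting geometric-type series in $j$ gives the factor $\bigl(1-(t_0/r)^2\bigr)^{-1}$ in the fixed-$t$ regime. In the regime $d\to0$ the same factor becomes roughly $\sqrt{n}$-dependent: the ratio of consecutive terms behaves like $e^{-2j d/t_0-j^2/(\alpha t_0^2 n)}$, which effectively yields a factor of order $t_0/(2d)$.

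\textbf{Step 3: integration over the half-plane.} Parametrize $z=re^{i\theta}$, so that $A_s=\{r\cos\theta\ge s\}$. The phase $r^{2n}e^{-nV(r)}$ is increasing-then-decreasing, with exponent
\[
-n\bigl[V(r)-2\log r\bigr]=-n\Bigl[\text{const}+\tfrac{\alpha}{2}(r-t_0)^2+\cdots\Bigr]
\]
for $r$ near $t_0$, and it is decreasing for $r>t_0$. The maximum over $A_s$ is therefore at the boundary point $z=s$, and near it I would perform a two-dimensional boundary Laplace analysis. In the normal (real) direction the exponent is linear with slope $n\bigl(sV'(s)-2\bigr)/s$, which produces a factor $\bigl[n(sV'(s)-2)/s\bigr]^{-1}$. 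In the tangential direction, $r\approx s/\cos\theta\approx s(1+\theta^2/2)$, so the exponent is quadratic in $\theta$ with coefficient $n s(sV'(s)-2)/2$; this gives a Gaussian integral contributing $\sqrt{2\pi/(n(sV'(s)-2))}$. Multiplying these factors by $e^{-n(V(s)-V(t_0))}(s/t_0)^{2n}$ reproduces part (1). In the small-$d$ regimes the slope is $sV'(s)-2\approx \alpha t_0 d$, which produces the $d^{-5/2}$ and the $\sqrt{t_0}/(4\pi\alpha n)$ constants. When $d\ll n^{-1/3}$ the cubic Taylor term of $V-2\log$ is negligible and the Gaussian form $e^{-\alpha nd^2/2}$ emerges, which gives (3). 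Parts (2a) and (2b) follow by substituting $d=t\sqrt{\gamma_n/(\alpha n)}$ and $d=\sqrt{\gamma_n/(\alpha n)}+v_n/\sqrt{\alpha n\gamma_n}$ into the moderate-deviation formula and simplifying with the definition of $\gamma_n$. In (2b) it is precisely the $(\log n)^5$ in $\gamma_n$ that cancels the $d^{-5/2}$ prefactor, leaving $e^{-v_n-v_n^2/(2\gamma_n)}$ up to a factor $1+o(1)$; this requires $v_n\ll\log n$ so that $d^{-5/2}\sim(\gamma_n/\alpha n)^{-5/4}$.

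\textbf{Main obstacle.} The hardest step is Step 2 combined with Step 3 in the intermediate regime $d\sim\sqrt{\log n/n}$. There the $j$-sum and the $r$-integral are coupled, because the effective number of contributing $j$ is about $t_0/(2d)\approx\sqrt n$, so neither the fixed-$j$ nor the continuum approximation is automatically valid. I would handle this by interchanging sum and integral: perform the Laplace analysis in $r$ for each $j$ first, with $r^{2(n-j)}e^{-nV(r)}$ peaking at $t_{2j/n}<s$, and then sum the boundary contributions in $j$ using the uniform expansion from Step 1. Uniformity of the error terms requires control of $V'''$ on a neighbourhood of $t_0$, which $(\mathbf H_1)$ supplies, plus a crude tail bound for $j\gtrsim \varepsilon n$ and for $r$ far from $t_0$, which $(\mathbf H_3)$ supplies.
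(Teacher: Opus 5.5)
Your proposal is correct, and its computational core matches the paper's. The paper also evaluates $\operatorname{Tr}(\mathbb K_n|_{\{\Re z>s\}})=\sum_k R_{n-k}(s)/J_{n-k}$ term by term. It does a Gaussian Laplace step in the tangential variable, then an endpoint Laplace step at $\Re z=s$, giving $R_{n-k}(s)\approx\sqrt{2\pi s}\,e^{-n\varphi_u(s)}(n\varphi_u'(s))^{-3/2}$. It then divides by the expansion of $J_{n-k}$ from Lemma \ref{lemjk} and sums a geometric series in $k$. This is exactly the ``Laplace for each $j$, then sum'' order you settle on in your last paragraph, and (2a), (2b) follow by the same substitutions.

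The genuine difference is the passage from probability to trace.
\begin{itemize}
\item The paper uses \eqref{comp} together with $\|\mathbb K_n|_A\|_2\le\operatorname{Tr}$. That is the right tool for Theorem \ref{main}, where the trace is of order one. In the deviation regime, however, it only gives $|(1-\det)-(1-e^{-\operatorname{Tr}})|\le e^{1/2}\operatorname{Tr}\,(1+o(1))$, which is not $o(\operatorname{Tr})$.
\item For fixed $t>t_0$ this cannot be repaired by a sharper Hilbert--Schmidt estimate. The diagonal terms $R_{n-k}(t)/J_{n-k}$ decay geometrically with fixed ratio $(t_0/t)^2$, so they alone force $\|\mathbb K_n|_A\|_2\ge c\operatorname{Tr}$ with some $c>0$.
\item Your Bonferroni sandwich $\operatorname{Tr}-\frac12\operatorname{Tr}^2\le\mathbb P\le\operatorname{Tr}$ is more elementary and is what actually delivers $\mathbb P\sim\operatorname{Tr}$. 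Equivalently, $1-e^{-\operatorname{Tr}}\le 1-\det\le\operatorname{Tr}$, since the eigenvalues of $\mathbb K_n|_A$ lie in $[0,1]$.
\end{itemize}

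A secondary difference is how the tail in $j$ is handled. The paper uses the monotonicity in Lemma \ref{decreasing} to bound the whole tail $\sum_{k>j_n}$ by $n$ times its first term, so it only needs expansions for $k\lesssim\sqrt{n\log n}$. Your route needs two things instead:
\begin{itemize}
\item the Laplace expansion of $h_{n-j}$ uniformly for $j\le\varepsilon n$, where nondegeneracy follows from $\alpha>0$ by continuity;
\item a separate crude bound for $j\ge\varepsilon n$, for instance via $\partial_u[\varphi_u(s)-\varphi_u(t_u)]=\log(s/t_u)>0$ with $t_u<t_0<s$.
\end{itemize}
This is more work, but it avoids the external stochastic-ordering lemma.

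Two bookkeeping slips affect the constant in (1).
\begin{itemize}
\item Since $k=n-j\le n-1$, the kernel sum runs over $j\ge 1$. The geometric factor is therefore $(t_0/r)^2/(1-(t_0/r)^2)=t_0^2/(r^2-t_0^2)$, not $(1-(t_0/r)^2)^{-1}$. Taken literally, your factor would put an extra $(t/t_0)^2$ into (1).
\item Along the boundary the exponent is $n(sV'(s)-2)\theta^2/2$, not $ns(sV'(s)-2)\theta^2/2$. There is an additional factor $s$ from the Jacobian $r\,d\theta$.
\end{itemize}
With both corrections, the product $\frac{\sqrt{n\alpha}}{(2\pi)^{3/2}t_0}\cdot\frac{t_0^2}{s^2-t_0^2}\cdot\frac{s}{n(sV'(s)-2)}\cdot s\sqrt{\frac{2\pi}{n(sV'(s)-2)}}$ reproduces the prefactor in (1) exactly. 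Separately, the Gaussian correction in $j$ is $e^{-2j^2/(\alpha n t_0^2)}$ rather than $e^{-j^2/(\alpha n t_0^2)}$; this is harmless at leading order.
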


The parallel result for the spectral radius is as follows.

\begin{thm}\label{thmsp}
Under the same assumptions as Theorem \ref{main}, we have the following asymptotic equivalences.

(1) Large deviations (fixed displacement). For any fixed $t>t_0$,
\[
\mathbb{P}\bigl( \max_{1\le i\le n}|\sigma_i| \ge t \bigr)
\sim
\frac{\sqrt{\alpha} \,t_0 t^2}{\sqrt{2\pi n}(t^2-t_0^2)(tV'(t)-2)}
\exp(-nV(t)+nV(t_0))(\frac{t}{t_0})^{2n}.
\]

(2) Critical Gumbel scale and its refined correction.

(a) Pure Gumbel scale (fixed $t>1$):
\[
\mathbb{P}\bigl( \max_{1\le i\le n}|\sigma_i| \ge t_0+\frac{t\sqrt{\widetilde{\gamma}_n}}{\sqrt{\alpha n}} \bigr)
\sim
t^{-2}(\frac{8\pi (\log n)^2}{\alpha n t_0^2 })^{\frac{t^2-1}{2}}.
\]

(b) Fine correction beyond the Gumbel scale ($1\ll v_n\ll \log n$):
\[
\mathbb{P}\bigl( \max_{1\le i\le n}|\sigma_i| \ge t_0+\frac{\sqrt{\widetilde{\gamma}_n}}{\sqrt{\alpha n}}+\frac{v_n}{\sqrt{\alpha n \widetilde{\gamma}_n}} \bigr)
\sim
\exp\bigl(-v_n-\frac{v_n^2}{2\widetilde{\gamma}_n}\bigr).
\]

(3) Moderate deviations on scale $d_n$. Given $d_n>0$:
\[\aligned
&\quad\mathbb{P}\bigl(\max_{1\le i\le n}|\sigma_i| \ge t_0 + d_n \bigr)\\
&
\sim
\begin{cases}
\displaystyle \frac{t_0}{2\sqrt{2\pi \alpha n} \, d_n^2 }
\exp\bigl(-\frac{1}{2}\alpha n d_n^2\bigr), & \frac{\sqrt{\log n}}{\sqrt{n}} \ll d_n \ll n^{-\frac13},\\[1.2em]
\displaystyle \frac{t_0\exp(-n V(t_0+d_n)+nV(t_0))}{2\sqrt{2\pi \alpha n} \,  d_n^2 }
(\frac{t_0+d_n}{t_0})^{2n}, & n^{-\frac13}\lesssim d_n\ll 1.
\end{cases}
\endaligned\]
\end{thm}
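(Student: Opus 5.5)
Since the spectral radius is the last of the three statements, I take Theorem \ref{thmsp} as the target. The plan uses Kostlan's observation. By it, $\max_i|\sigma_i|\stackrel{d}{=}\max_{1\le j\le n}Y_j$, where the $Y_j$ are independent with density proportional to $r^{2j-1}e^{-nV(r)}$ on $(0,\infty)$. Hence
\[
\mathbb{P}\bigl(\max_i|\sigma_i|\ge s\bigr)=1-\prod_{j=1}^n\bigl(1-p_j(s)\bigr),\qquad p_j(s)=\frac{\int_s^\infty r^{2j-1}e^{-nV(r)}dr}{\int_0^\infty r^{2j-1}e^{-nV(r)}dr}.
\]
In every regime considered the probability tends to $0$. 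I will therefore show that $\max_j p_j(s)\to0$ and $\sum_j p_j(s)\to0$, and then use $1-\prod(1-p_j)\sim\sum_j p_j$. The task reduces to a precise asymptotic for $\sum_j p_j(s)$.

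\textbf{Step 1: the individual $p_j$.} Write $j=n-k$, so $r^{2j-1}e^{-nV(r)}=r^{-2k-1}e^{-n(V(r)-2\log r)}$. For the denominator I use Laplace's method around the minimizer $t_{u}$ of $V-(2-u)\log r$, with $u=2k/n$; this minimizer exists and is unique by (\textbf{H}$_2$)--(\textbf{H}$_3$). Only $k\ll n$ will matter, so I expand around $t_0$. This gives
\[
\int_0^\infty\approx\sqrt{2\pi/(n\alpha)}\,t_0^{-2k-1}e^{-n(V(t_0)-2\log t_0)}e^{O(k^2/n)}.
\]
For the numerator with $s>t_{u}$, the integrand is log-decreasing past $s$ with slope $n(V'(s)-2/s)+(2k+1)/s$. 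A one-sided Laplace/Watson estimate gives
\[
\int_s^\infty\approx\frac{s^{-2k}e^{-n(V(s)-2\log s)}}{n(sV'(s)-2)+2k}.
\]

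\textbf{Step 2: summation over $k$.} Taking the ratio, $p_{n-k}(s)$ is essentially $A_n(s)\,(t_0/s)^{2k}$ times a slowly varying factor. Summing the geometric series over $k$ gives the factor $(1-t_0^2/s^2)^{-1}=s^2/(s^2-t_0^2)$. For fixed $s=t>t_0$ this directly produces formula (1), with prefactor $\sqrt{n\alpha/2\pi}\,t_0\cdot\frac{1}{n(tV'(t)-2)}\cdot\frac{t^2}{t^2-t_0^2}$, matching the statement. For $s=t_0+d_n$ with $d_n\to0$, the terms $k\lesssim 1/d_n$ contribute, and the sum is $\approx t_0/(2d_n)$. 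Expanding $sV'(s)-2\approx \alpha t_0d_n$ and
\[
n(V(s)-V(t_0))-2n\log(s/t_0)=\tfrac12\alpha n d_n^2+O(nd_n^3)
\]
yields (3). The cubic term is negligible exactly when $d_n\ll n^{-1/3}$; otherwise it is kept in exact form, which explains the two cases. One must also check that the $e^{O(k^2/n)}$ and the $2k$ in the denominator are negligible. This holds since $k\sim1/d_n$ and $1/d_n\ll nd_n$, because $d_n\gg n^{-1/2}$. Regimes (2a) and (2b) are the case $d_n=\sqrt{\widetilde\gamma_n/(\alpha n)}\,(t+o(1))$ of the first formula in (3). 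Substituting gives $\frac{t_0\sqrt{\alpha n}}{2\sqrt{2\pi}\,\widetilde\gamma_n t^2}e^{-t^2\widetilde\gamma_n/2}$. Inserting the definition of $\widetilde\gamma_n$ together with $\widetilde\gamma_n\sim\log n$ produces $t^{-2}(8\pi(\log n)^2/(\alpha nt_0^2))^{(t^2-1)/2}$ and, for (2b), $\exp(-v_n-v_n^2/(2\widetilde\gamma_n))$. The residual $\log\widetilde\gamma_n$ versus $\log\log n$ corrections are $o(1)$ in the exponent after multiplication by $v_n/\widetilde\gamma_n\to0$; this needs checking.

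\textbf{Main obstacle.} The main difficulty is uniformity in Step 1. The Laplace approximations must hold uniformly in $k$ over the range $k\le K_n$ that carries the sum. In addition, the tail $k>K_n$, including $k$ comparable to $n$ where the minimizer $t_u$ moves inward, must be bounded so that it is negligible. For the tail I would first try a crude bound $p_{n-k}(s)\le e^{-c k d_n}p_n(s)$, obtained from monotonicity of $r\mapsto (t_0/r)^{2k}$ on $[s,\infty)$ together with $t_u\le t_0$; this would use (\textbf{H}$_3$) to control the global minimum. In the large deviation case (1) the one-sided Laplace estimate also requires $V$ to grow fast enough at infinity for the integrals to converge. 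This should follow from (\textbf{H}$_3$), since the global minimizer exists for $u$ near $0$.
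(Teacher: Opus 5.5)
Your proposal is correct and is essentially the paper's argument. For the radial set $\{|z|>t\}$ the restricted kernel is diagonal in the monomial basis (the paper shows $\widetilde{M}_{j,k}=0$ for $j\neq k$), so the Fredholm determinant is exactly your Kostlan product $\prod_j(1-p_j)$. The paper's step $1-\det({\rm I}-\mathbb{K}_n|_{\{|z|>t\}})\sim{\rm Tr}$ is therefore your $1-\prod_j(1-p_j)\sim\sum_j p_j$, and your two Laplace estimates plus geometric summation reproduce its computation of $\sum_k \widetilde{R}_{n-k}(t)/J_{n-k}$. The only real difference is the tail in $k$: your bound $p_{n-k}(s)\lesssim e^{-ckd_n}p_n(s)$ replaces the paper's stochastic-monotonicity lemma and its estimate of $n\widetilde{Q}_{i_n}$, and either works. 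Your bound follows from the $k=0$ weight concentrating within $O(n^{-1/2})$ of $t_0$, together with $d_n\gg n^{-1/2}$.
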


\begin{rmk}
The choice $V(x)=x^2$ on $x\ge0$ gives $t_0=1$ and $\alpha=4$, corresponding to the complex Ginibre ensemble. In this setting, the convergence rates for $\widetilde{X}_n$ and $X_n$ match those obtained in \cite{HuMa2026, MaMeng26}. For the large and moderate deviation estimates, \cite{XuZeng2026} derived lower and upper bounds differing by a factor $1/n$; our result shows that their upper bound is sharp, yielding asymptotic equivalence.
\end{rmk}

\begin{rmk}\label{induced}
For $V(x)=x^2-2\kappa\log x$ on $x>0$ with $\kappa>0$, we have $t_0=\sqrt{1+\kappa}$ and $\alpha=4$. Theorem \ref{main} gives, for any fixed $t\in\mathbb{R}$,
\[
\lim_{n\to\infty}\mathbb{P}(\max_{1\le i\le n}|\sigma_i|\le \sqrt{1+\kappa}+\frac{\sqrt{\gamma_n'}}{\sqrt{4n}}+\frac{t}{\sqrt{4\gamma_n' n}})=e^{-e^{-t}},
\]
where $\gamma_n'=\log \frac{n(1+\kappa)}{2\pi (\log n)^2}$, recovering \cite[Theorem 6]{Auster} for the complex induced Ginibre ensemble with $L=\kappa n$. Beyond this known limit, Theorem \ref{main} further provides the analogous statement for the real parts,
\[
\lim_{n\to\infty}\mathbb{P}(\max_{1\le i\le n}\Re\sigma_i\le \sqrt{1+\kappa}+\frac{\sqrt{\gamma_n}}{\sqrt{4n}}+\frac{t}{\sqrt{4\gamma_n n}})=e^{-e^{-t}},
\]
with $\gamma_n=\frac{1}{2}\log\frac{n(1+\kappa)}{2\pi^4 (\log n)^5}$; Theorems \ref{thmrealpart} and \ref{thmsp} provide the corresponding large and moderate deviation principles. \end{rmk}

\begin{rmk}\label{Vn}
We make some remarks concerning the assumptions on \(V\).

\begin{enumerate} 
\item Our assumptions remove the need for strict convexity of \(V\), thereby accommodating, for example, the power-law case \(V(r)=r^a\) with \(0<a<1\), the double-well case \(V(r)=r^4-\kappa r^2\) (\(\kappa\in\mathbb{R}\)), and the \(n\)-dependent case \(V_n(r)=(1+\frac{\kappa}{n})\log(1+r^2)\).

\item In \cite{chafai14}, the authors assume \(V\) to be smooth and strongly convex (\(V''>a>0\)), together with condition (H2), which excludes the power-law case with \(0<a<1\). These additional regularity requirements are stronger than our assumptions $(${\bf H}$_1)$--$(${\bf H}$_3)$.

\item In \cite{EbrahimiZohren}, the authors assume either the monotonicity of \(rV'(r)\) on the entire half-line \((0,\infty)\), or the stronger hypothesis that \(V'(r)>0\) and \(V\) is convex, both of which exclude the double-well case \(V(r)=r^4-\kappa r^2\) (\(\kappa\in\mathbb{R}\)). By contrast, our conditions $(${\bf H}$_2)$--$(${\bf H}$_3)$ are equivalent to the strict increase of \(rV'(r)\) on the restricted set \(\{r: 0<rV'(r)\le 2\}\). This is precisely the minimal natural assumption needed to produce the edge Gumbel behavior when \(\alpha>0\).
\item The theorems are stated for a fixed potential $V$ independent of $n$. For $n$-dependent potentials $V=V_n$, the method remains applicable; however, one must carefully track the limiting behavior of $t_0$, $\alpha$, and the leading-order terms of $\gamma_n$ and $\gamma_n'$. 
\end{enumerate}
\end{rmk}

\begin{rmk}\label{listV}
We list some classical potentials satisfying $(${\bf H}$_1)$--$(${\bf H}$_3)$:
\begin{enumerate}
\item $V(r)=r^{a}$, $a>0$: $t_0=(2/a)^{1/a}$, $\alpha=2^{1-2/a}a^{1+2/a}$.
\item $V(r)=r^4-\kappa r^2$, $\kappa\in\mathbb{R}$: 
\(
t_0=\frac12\sqrt{\kappa+\sqrt{\kappa^2+8}}, \; 
\alpha=4\sqrt{\kappa^2+8}.
\)
\item $V(r)=\frac12 r^4-2\kappa\log r, \kappa>0$: 
$t_0=(1+\kappa)^{1/4},\; \alpha=8\sqrt{1+\kappa}$.
\item $V(r)=r^2+r^{-2}$: 
$t_0=\sqrt{\frac{1+\sqrt5}{2}},\; \alpha=10-2\sqrt5$.
\item $V(r)=r^2+\log(1+r^2)$: 
$t_0=\sqrt{\frac{\sqrt5-1}{2}},\quad \alpha=10-2\sqrt5$.
\end{enumerate}
\end{rmk}

\subsection{Results for $\min_{1\le i\le n}\Re \sigma_i$ and $\min_{1\le i\le n}|\sigma_i|$ }

We now assume that $V\in C^3(0,+\infty)$ satisfies the following conditions used in \cite{EbrahimiZohren}: $V'(r)=0$ has a unique solution $t_1>0$, and $rV'(r)$ is strictly increasing on $(0,+\infty)$.

Since $r V'(r)$ is strictly increasing, we know 
$$0<(r V'(r))'|_{r=t_1}=t_1V''(t_1)+V'(t_1)=t_1 V''(t_1)$$ and then 
 $V''(t_1)>0.$ Given
\[
{\tau}_n=\log \frac{nV''(t_1) t_1^2}{8\pi (\log n)^2},
\]
it was showed in \cite{EbrahimiZohren} that 
$$\lim_{n\to\infty}\mathbb{P}(\min_{1\le i\le n}|\sigma_i|\ge t_1-\frac{\sqrt{\tau_n}}{\sqrt{V''(t_1) n}}+\frac{t}{\sqrt{V''(t_1)n\tau_n}})=e^{-e^{t}}$$ for any $t\in\mathbb{R}.$

Now we state similar results for $\min_{1\le i\le n}|\sigma_i|.$
\begin{thm}\label{minBerry}
Let $V$ be in  $C^3(0, +\infty)$ and suppose that $V'=0$ has a unique solution $t_1>0$ and $r V'(r)$ is strictly increasing on $(0, +\infty).$	Let $t_1, \tau_n$ be defined as above. Then 
\begin{itemize}
\item[(1)] Berry-Esseen bound:
$$\aligned 
\sup_{t\in\mathbb{R}}|\mathbb{P}(\min_{1\le i\le n}|\sigma_i|\ge t_1-\frac{\sqrt{\tau_n}}{\sqrt{V''(t_1) n}}+\frac{t}{\sqrt{V''(t_1)n\tau_n}})-e^{-e^{t}}|&\sim\frac{2\log\log n}{e\log n}.
\endaligned $$ 
\item[(2)] For any $0<t<t_1,$ we have 
$$\mathbb{P}(\min_{1\le i\le n}|\sigma_i|\le t)\sim \frac{\sqrt{V''(t_1)}\,t_1 t}{\sqrt{2\pi n}(t_1^2-t^2)(-V'(t))}
\exp(-nV(t)+nV(t_1)).$$ 
\item[(3)] Given $t>1,$ we obtain \[
\mathbb{P}\bigl( \min_{1\le i\le n}|\sigma_i| \le t_1-\frac{t\sqrt{{\tau}_n}}{\sqrt{V''(t_1) n}} \bigr)
\sim
t^{-2}(\frac{8\pi (\log n)^2}{V''(t_1) n t_1^2 })^{\frac{t^2-1}{2}}.
\]
\item[(4)] Given $1\ll v_n\ll \log n$:
\[
\mathbb{P}\bigl( \min_{1\le i\le n}|\sigma_i| \le t_1-\frac{\sqrt{{\tau}_n}}{\sqrt{V''(t_1) n}}-\frac{v_n}{\sqrt{V''(t_1) n {\tau}_n}} \bigr)
\sim
\exp\bigl(-v_n-\frac{v_n^2}{2{\tau}_n}\bigr).
\]
\item[(5)] Given $d_n>0$:
\[\aligned
&\quad\mathbb{P}\bigl(\min_{1\le i\le n}|\sigma_i| \le t_1 - d_n \bigr)\\
&
\sim
\begin{cases}
\displaystyle \frac{t_1}{2\sqrt{2\pi V''(t_1) n} \, d_n^2 }
\exp\bigl(-\frac{1}{2}V''(t_1) n d_n^2\bigr), & \frac{\sqrt{\log n}}{\sqrt{n}} \ll d_n \ll n^{-\frac13},\\[1.2em]
\displaystyle \frac{t_1\exp(-n V(t_1-d_n)+nV(t_1))}{2\sqrt{2\pi V''(t_1) n} \,  d_n^2 }
, & n^{-\frac13}\lesssim d_n\ll 1.
\end{cases}
\endaligned\]
\end{itemize} 
\end{thm}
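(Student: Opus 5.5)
By Kostlan's observation, $\{|\sigma_i|\}_{i=1}^n$ has the same law as $\{Y_i\}_{i=1}^n$, where the $Y_i$ are independent and $Y_i$ has density proportional to $r^{2i-1}e^{-nV(r)}$ on $(0,\infty)$. Hence
\[
\mathbb{P}\Bigl(\min_{1\le i\le n}|\sigma_i|\ge s\Bigr)=\prod_{i=1}^n\bigl(1-p_i(s)\bigr),\qquad p_i(s)=\frac{\int_0^s r^{2i-1}e^{-nV(r)}\,dr}{\int_0^\infty r^{2i-1}e^{-nV(r)}\,dr}.
\]
All five statements follow once we have sharp asymptotics for $S(s):=\sum_i p_i(s)$ and control of $\sum_i p_i(s)^2$, using
\[
\log\prod_i(1-p_i)=-S-O\Bigl(\sum_i p_i^2\Bigr),\qquad
\mathbb{P}(\min|\sigma_i|\le s)=1-e^{-S(1+o(1))}\sim S \ \text{ when } S\to0.
\]

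\textbf{Step 1: asymptotics of each $p_i$.} Write $i=n u/2$, so the weight is $e^{-n(V(r)-u\log r)}$ up to lower-order factors. Because $rV'(r)$ is strictly increasing, the equation $rV'(r)=u$ has a unique root $r_u$. Near the hole, that is $r$ close to $t_1$, only the indices with $u$ small contribute; since $V'(t_1)=0$ we have $r_0=t_1$. The denominator is a Laplace integral centered at $r_u$. For the numerator with $s<r_u$, the integrand is monotone on $(0,s)$, so the integral is concentrated at the endpoint, giving
\[
\int_0^s\approx \frac{s^{2i}e^{-nV(s)}}{n(u-sV'(s))}.
\]
In the edge regime $s=t_1-O(\sqrt{\log n/n})$, both the relevant $i$ and $s$ sit at distance $O(1/\sqrt n)$ scales from the root, and the ratio becomes a Gaussian tail, $p_i(s)\approx \bar\Phi\bigl(\sqrt{nV''(t_1)}\,(r_u-s)\bigr)$ with $r_u-t_1\approx u/(t_1V''(t_1))$.

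\textbf{Step 2: summing over $i$.} The sum over $i$ converts to a Riemann integral in $u$, plus a discrete correction from the geometric structure $(s/r)^{2i}$. For fixed $s<t_1$ (item (2)), the terms decay geometrically with ratio roughly $(s/t_1)^2$. Summing this geometric series yields the factor $t_1^2/(t_1^2-s^2)$. Combining it with the Laplace prefactor $\sqrt{nV''(t_1)/2\pi}\,t_1/\ldots$ from the $i=1$-type denominators and the endpoint factor $1/(-sV'(s))$ produces the stated constant. In the moderate regime (item (5)), with $d_n\ll1$, the geometric ratio is $1-2d_n/t_1$, so the sum contributes $t_1/(2d_n)$. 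A further factor $1/d_n$ comes from $-V'(t_1-d_n)\approx V''(t_1)d_n$. Together these give the $d_n^{-2}$ prefactor. When $d_n\ll n^{-1/3}$, the cubic Taylor term of $V$ is negligible, so the exponent reduces to $\tfrac12V''(t_1)nd_n^2$. When $d_n\gtrsim n^{-1/3}$, one keeps the full $V(t_1-d_n)-V(t_1)$.

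\textbf{Step 3: the Gumbel window.} Here the sum $\sum_i\bar\Phi(\cdot)$ is evaluated through the Mills-ratio expansion $\bar\Phi(x)\sim e^{-x^2/2}/(x\sqrt{2\pi})$. With $x=\sqrt{\tau_n}+t/\sqrt{\tau_n}$ and the summation in $i$ contributing a factor $\sqrt n/x$, this gives $S=e^{-t}\bigl(1+\epsilon_n(t)\bigr)$ with $\epsilon_n(t)=-\frac{2\log\log n}{\log n}+\ldots$, uniformly for $|t|\lesssim\log\log n$. The $(\log n)^2$ inside $\tau_n$ is exactly what cancels the $x^{-2}$ factor. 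Items (3) and (4) are the same computation with $x=t\sqrt{\tau_n}$ and $x=\sqrt{\tau_n}+v_n/\sqrt{\tau_n}$ respectively; the $v_n^2/(2\tau_n)$ term arises from squaring $x$. For the Berry--Esseen bound in item (1), note that
\[
\bigl|e^{-S}-e^{-e^{-t}}\bigr|\approx e^{-e^{-t}}e^{-t}\,\frac{2\log\log n}{\log n}\cdot\frac{\log n}{\log\log n}\,|\epsilon_n|.
\]
Its supremum over $t$ of $e^{-t}e^{-e^{-t}}$ is $1/e$, attained at $t=0$, which yields $\frac{2\log\log n}{e\log n}$. Tails with $|t|\gg1$ are handled by monotonicity.

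\textbf{Main obstacle.} The main difficulty is obtaining uniform error control in Step 1 across the crossover between the Gaussian (endpoint-near-saddle) regime and the endpoint-dominated regime. The requirements differ by regime: the $i$-summation must be accurate to relative error $o(1/\log n)$ in the Gumbel window (so as not to pollute the $\log\log n/\log n$ term), and to $o(1)$ elsewhere. I would handle this with a uniform Laplace estimate: substitute $r=r_u-y/\sqrt n$ and use the $C^3$ Taylor bound on a window $|y|\le n^{1/6-\delta}$. Contributions from outside the window would be bounded using strict monotonicity of $rV'(r)$. Finally, indices $i\ge\varepsilon n$ give exponentially small contributions, which removes any need for assumptions about $V$ at large $r$.
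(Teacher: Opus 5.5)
Your proposal is correct and is essentially the paper's argument. For the disk $\{|z|\le b(t)\}$ the paper shows the restricted kernel is diagonal in the monomial basis, so $\det({\rm I}-\mathbb{K}_n|_{B(t)})=\prod_k(1-P_k(t))$ is exactly your Kostlan product; its trace and squared Hilbert--Schmidt norm are your $S$ and $\sum_i p_i^2$; and the paper's endpoint-over-interior Laplace ratio for each $P_k(t)$, followed by a geometric sum, is the same computation as your Mills-ratio/Riemann-sum step.

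Only the bookkeeping needs fixing:
\begin{itemize}
\item With the statement's threshold $+t/\sqrt{V''(t_1)n\tau_n}$, you have $x=\sqrt{\tau_n}-t/\sqrt{\tau_n}$ and $S=e^{t}(1+\epsilon_n)$, where $\epsilon_n=(2\log\log n+2t-t^2/2+O(1))/\tau_n$. The leading term is positive, since $\log n=\tau_n+2\log\log n+O(1)$.
\item Consequently $|e^{-S}-e^{-e^{t}}|\approx e^{t-e^{t}}|\epsilon_n|$ with no extra factor. Read literally, your display doubles the constant.
\item In item (2) the sum is $\sum_{i\ge1}(s/t_1)^{2i-1}=st_1/(t_1^2-s^2)$, not $t_1^2/(t_1^2-s^2)$.
\end{itemize}
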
 

\begin{rmk} The assumption that the solution to $V'(r)=0$ is positive in Theorem \ref{minBerry} exclude all $V$ listed above with $V'(0)=0.$ For other examples satisfy the conditions $V'(t_1)=0$ with $t_1>0$ and $rV'(r)$ strictly increasing on $(0, +\infty)$, we have   
\begin{enumerate}
\item $V(r)=\frac12 r^4-2\kappa\log r, \kappa>0$: 
$t_1=\kappa^{1/4},\; V''(t_1)=8\sqrt{\kappa}$.
\item $V(r)=r^2+r^{-2}$: 
$t_1=1,\; V''(t_1)=8$.
\item $V(r)=r^2-2\kappa \log r, t_1=\sqrt{\kappa}$ and $V''(t_1)=4.$
\end{enumerate} 	
\end{rmk}

\begin{rmk} 
The radiality of $V$ implies that the correlation kernel $K_n$ defined in \eqref{Kn} below satisfies the symmetry
\[
K_n(z,w)=K_n(-z,-w),
\]
since both $|z|$ and $z\bar w$ are invariant under $z\mapsto -z$. Consequently, the change of variables $z\mapsto -z$ maps the half-plane $\{\Re z\ge x\}$ onto $\{\Re z\le -x\}$ and induces a unitary equivalence between the restricted operators $\mathbb K_n|_{\{\Re z\ge x\}}$ and $\mathbb K_n|_{\{\Re z\le -x\}}$. Hence their Fredholm determinants coincide:
\[
\det\bigl({\rm I}-\mathbb K_n|_{\{\Re z\ge x\}}\bigr)
=
\det\bigl({\rm I}-\mathbb K_n|_{\{\Re z\le -x\}}\bigr).
\]
By the hole probability formula, this yields the distributional identity
\[
\min_{1\le i\le n}\Re \sigma_i \stackrel{d}{=} -\max_{1\le i\le n}\Re \sigma_i.
\]
Therefore, under the same conditions, the results for the rightmost eigenvalue in Theorems \ref{main} and \ref{thmrealpart} apply equally to $\min_{1\le i\le n}\Re \sigma_i$.
\end{rmk}

We now sketch the proof strategy.
\subsection{Sketch of the proof}
First, we verify that $(\sigma_1,\dots,\sigma_n)$ forms a determinantal point process with correlation kernel
\begin{equation}\label{Kn}
K_n(z,w)=
\exp(-\frac{n}{2}V(|z|)-\frac{n}{2}V(|w|))
\sum_{k=1}^{n} \frac{(z\bar w)^{k}}{J_k}
\end{equation}
with $$J_k:=\int_{\mathbb{C}} e^{-nV(|z|)} |z|^{2k}\,dz.$$
Let $\mathbb{K}_n$ denote the integral operator on $L^2(\mathbb{C})$ with kernel $K_n$. For any measurable $A\subset\mathbb{C}$, the restricted operator $\mathbb{K}_n|_A$ is trace class and Hilbert--Schmidt, and the gap probability formula gives
\[
\mathbb{P}(\{\sigma_1,\dots,\sigma_n\}\cap A=\varnothing)
= \det({\rm I}-\mathbb{K}_n|_A).
\]
Moreover, from \cite{Gohberg} we have the bound
\begin{equation}\label{comp}
\left|\det({\rm I}-\mathbb{K}_n|_A) - \exp\bigl(-\operatorname{Tr}(\mathbb{K}_n|_A)\bigr)\right|
\le \|\mathbb{K}_n|_A\|_2\,
\exp\{\frac12(\|\mathbb{K}_n|_A\|_2+1)^2 - \operatorname{Tr}(\mathbb{K}_n|_A)\}.
\end{equation}
Thus, whenever $\|\mathbb{K}_n|_A\|_2\ll 1$, we have
\[
\det({\rm I}-\mathbb{K}_n|_A) \sim \exp\bigl(-\operatorname{Tr}(\mathbb{K}_n|_A)\bigr).
\]

For Theorem \ref{main}, we choose suitable sets $A(t)$ and $\widetilde{A}(t)$ such that
\[
\mathbb{P}(X_n\le t)=\det({\rm I}-\mathbb{K}_n|_{A(t)}),\qquad
\mathbb{P}(\widetilde{X}_n\le t)=\det({\rm I}-\mathbb{K}_n|_{\widetilde{A}(t)}).
\]
The estimates of the trace and Hilbert--Schmidt norms rely on Laplace's method and saddle-point analysis, justified by assumptions (H1)--(H3). Sharp control of these norms yields the Gumbel limit with the exact convergence rate.

For Theorem \ref{thmrealpart}, the key observation is the hole-probability identity
\[
\mathbb{P}\bigl(\max_{1\le i\le n}\Re \sigma_i \ge t_0 + z_n\bigr)
= 1 - \det\bigl({\rm I}- \mathbb{K}_n|_{\{\Re z > t_0 + z_n\}}\bigr),
\]
where $z_n\ll 1$ in the moderate regime and $z_n=t-t_0$ in the large-deviation regime. 
Once we establish
\[
\operatorname{Tr}\bigl(\mathbb{K}_n|_{\{\Re z > t_0 + z_n\}}\bigr)
= \int_{\Re z > t_0 + z_n} K_n(z,z)\,dz = o(1),
\]
the inequality $|K_n(z,w)|^2 \le K_n(z,z)K_n(w,w)$ implies
\[
\|\mathbb{K}_n|_{\{\Re z > t_0 + z_n\}}\|_2^2
\le \bigl(\operatorname{Tr}(\mathbb{K}_n|_{\{\Re z > t_0 + z_n\}})\bigr)^2 = o(1).
\]
Consequently, \eqref{comp} yields the crucial asymptotic equivalence
\begin{equation}\label{keycom}
\begin{aligned}
1 - \det\bigl({\rm I} - \mathbb{K}_n|_{\{\Re z > t_0 + z_n\}}\bigr)
&\sim 1 - \exp\bigl(-\operatorname{Tr}\bigl(\mathbb{K}_n|_{\{\Re z > t_0 + z_n\}}\bigr)\bigr) \\
&\sim \operatorname{Tr}\bigl(\mathbb{K}_n|_{\{\Re z > t_0 + z_n\}}\bigr).
\end{aligned}
\end{equation}
This recovers the two-sided inequality used in \cite{MaMeng26b, XuZeng2026}:
\[
n^{-1}\operatorname{Tr}\bigl(\mathbb{K}_n|_{\{\Re z > t_0 + z_n\}}\bigr)
\le
\mathbb{P}\bigl(\max_{1\le i\le n}\Re \sigma_i \ge t_0 + z_n\bigr)
\le
\operatorname{Tr}\bigl(\mathbb{K}_n|_{\{\Re z > t_0 + z_n\}}\bigr).
\]
Our new asymptotic formula \eqref{keycom} improves upon these bounds by eliminating the extraneous $n^{-1}$ factor, thus yielding the correct leading-order tail behavior. Similar argument works for Theorem \ref{thmsp}. 

The same argument applies to the smallest modulus. In fact, the behavior near the inner edge $t_1$ is completely analogous to that near the outer edge $t_0$, with $t_0$ replaced by $t_1$ and $\alpha:=V''(t_0)+2/t_0^2$ replaced by $V''(t_1)$. Thus the convergence rates coincide, and the deviation formulas follow from the same saddle-point analysis mutatis mutandis. The only technical difference is that the dominant trace contribution comes from $n-\sqrt{n\log n}\le k\le n$ for the maximum, whereas for the minimum it comes from $0\le k\le \sqrt{n\log n}$; apart from this, the two analyses are entirely parallel.

Throughout the paper, we use the standard asymptotic notation. Given positive sequence $(b_n)$ and real sequence $(a_n)$, we write
\[
a_n = O(b_n) \quad\text{if } \exists\, C>0 \text{ such that } |a_n| \le C b_n \text{ for all sufficiently large } n,
\]
and
\[
a_n = o(b_n) \quad\text{if } \lim_{n\to\infty} \frac{a_n}{b_n} = 0.
\]
We write \(a_n \ll b_n\) to mean \(a_n = o(b_n)\),  \(a_n \lesssim b_n\) to mean \(a_n = O(b_n)\) and $a_n\asymp b_n$ to mean $a_n\lesssim b_n$ as well as $b_n\lesssim a_n$. The notation \(a_n \sim b_n\) signifies \(a_n/b_n \to 1\) as \(n\to\infty\). All asymptotic relations are understood as \(n\to\infty\), unless otherwise specified. 
In this paper, all symbols carrying a tilde (e.g., $\widetilde{A}(t)$, $\widetilde{X}_n$) refer to quantities associated with the spectral radius, whereas symbols without a tilde (e.g., $A(t)$, $X_n$) generally correspond to the rightmost eigenvalue.

The paper will be organized as follows: in the next section, we provide some lemmas and the third section is devoted to the proof of Theorems \ref{main}-\ref{thmsp} and we put the proof of Theorem \ref{minBerry} at the last section.  

\section{Preliminaries}  
It is known that the tuple $(\sigma_1, \dots, \sigma_n)$ forms a determinantal point process (see, e.g., \cite{Cronvall, hough2009}). Since this fact serves as the starting point for our analysis, we present it as a separate lemma for clarity, although it follows directly from the aforementioned references.\begin{lem}\label{lemdpp}
	Let 
\begin{equation}\label{Cnk}
J_{k} = \int_{\mathbb{C}} e^{-nV(|z|)} |z|^{2k} \, dz.
\end{equation}
Then $(\sigma_1, \dots, \sigma_n)$ form a determinantal point process with correlation kernel given by
\[
K_n(z, w) = \exp\!\left( -\frac{n}{2}V(|z|) - \frac{n}{2}V(|w|) \right) \sum_{k=0}^{n-1} \frac{(z \bar{w})^{k}}{J_{k}}.
\]
\end{lem}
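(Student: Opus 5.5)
The plan is the standard orthogonal polynomial argument for rotation-invariant weights. We write $|\Delta(z)|^2=\det(z_j^{k})\overline{\det(z_j^{k})}$ with $0\le j\le n-1$ indexing points and $0\le k\le n-1$ indexing columns. For a radial weight $w(z)=e^{-nV(|z|)}$ the monomials are orthogonal in $L^2(w\,d^2z)$. Passing to polar coordinates $z=re^{i\theta}$,
\[
\int_{\mathbb{C}} z^{j}\bar z^{k} e^{-nV(|z|)}\,d^2z=\int_0^\infty r^{j+k+1}e^{-nV(r)}\,dr\int_0^{2\pi}e^{i(j-k)\theta}\,d\theta=\delta_{jk}J_k .
\]

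We first check that $J_k<\infty$ for $0\le k\le n-1$, so that the joint density \eqref{jointpdf} is normalizable. This uses the confinement implicit in ({\bf H}$_2$)--({\bf H}$_3$): $V(x)-(2-u)\log x$ has a global minimizer for $u\in[0,2)$, i.e. $V$ grows at least like $2\log x$ at infinity. Integrability at $0$ also has to be handled. This finiteness is the only genuinely delicate point. I would simply take it as part of the standing assumption that \eqref{1.1} defines a probability measure, as in the cited references \cite{Cronvall, hough2009}.

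Next, we set $\psi_k(z)=J_k^{-1/2}z^k e^{-nV(|z|)/2}$. The previous computation shows that $\{\psi_k\}_{k=0}^{n-1}$ is orthonormal in $L^2(\mathbb{C})$. By multilinearity of the determinant,
\[
\prod_{i}e^{-nV(|z_i|)}|\Delta(z)|^2=\Big(\prod_k J_k\Big)\,\bigl|\det(\psi_k(z_j))\bigr|^2=\Big(\prod_k J_k\Big)\det\bigl(K_n(z_i,z_j)\bigr)_{i,j=1}^n ,
\]
where $K_n(z,w)=\sum_{k=0}^{n-1}\psi_k(z)\overline{\psi_k(w)}$, which is exactly the stated kernel. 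Here we used $\det(A)\overline{\det(A)}=\det(AA^*)$.

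Finally, $K_n$ is the kernel of the orthogonal projection onto $\mathrm{span}\{\psi_k\}$. It therefore satisfies the reproducing property $\int K_n(z,u)K_n(u,w)\,d^2u=K_n(z,w)$ and has trace $\int K_n(z,z)\,d^2z=n$. The Gaudin integration lemma then gives
\[
\int\det(K_n(z_i,z_j))_{i,j\le m}\,d^2z_m=(n-m+1)\det(K_n(z_i,z_j))_{i,j\le m-1}.
\]
Iterating this shows two things. First, $Z_n=n!\prod_k J_k$ (up to the symmetrization convention). Second, the $m$-point correlation functions $\frac{n!}{(n-m)!}\int p(z)\,d^2z_{m+1}\cdots d^2z_n$ equal $\det(K_n(z_i,z_j))_{i,j\le m}$, which is the determinantal property.

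One notational point should be flagged: the kernel in \eqref{Kn} is written with $\sum_{k=1}^n$ and $(z\bar w)^k$. This differs from the lemma's $\sum_{k=0}^{n-1}$, and the lemma's indexing is the correct one.
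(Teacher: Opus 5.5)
Your argument is correct. It is the standard orthogonal-polynomial/projection-kernel proof: orthogonality of monomials under the radial weight, $|\det A|^2=\det(AA^*)$, and Gaudin's integration lemma. The paper does not write this out but delegates it to \cite{Cronvall, hough2009}. Your observation that the index range in \eqref{Kn} should be $0\le k\le n-1$ is also right; that is the range in the lemma and the one used later in the trace computations.

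Your caution about $J_k<\infty$ is well placed. ({\bf H}$_3$) at $u=0$ only yields $V(x)\ge 2\log x+c$, which bounds the integrand of $J_{n-1}$ merely by a multiple of $r^{-1}$. In fact ({\bf H}$_1$)--({\bf H}$_3$) allow $V(x)-2\log x$ to stay bounded at infinity, and then $J_{n-1}=\infty$. So normalizability is a genuine extra standing assumption, implicit in the paper's phrase ``probability measure'', not a consequence of the hypotheses.
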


Let $\mathbb{K}_n$ denote the integral operator on $L^2(\mathbb{C})$ with kernel $K_n$, defined by $$(\mathbb{K}_n f)(z)=\int_{\mathbb{C}} K_n(z, w) f(w) \, d w.$$ Invoking Lemma \ref{lemdpp} and the properties of the gap probability, we express the probability of interest as the Fredholm determinant of $\mathbb{K}_n$ restricted to a measurable set. Setting  
\[
\widetilde{A}(t) = \{ z \in \mathbb{C} : |z| > \widetilde{a}(t)\} \quad \text{and} \quad {A}(t) = \{ z \in \mathbb{C} : \Re z > a(t)\}
\]
with $\widetilde{a}(t)=t_0 + \frac{\sqrt{\widetilde{\gamma}_n}}{\sqrt{\alpha n}}+\frac{t }{\sqrt{\alpha n\widetilde{\gamma}_n}}$ and ${a}(t)=t_0 + \frac{\sqrt{{\gamma}_n}}{\sqrt{\alpha n}}+\frac{t }{\sqrt{\alpha n{\gamma}_n}},$ 
we know from the gap probability formula that 
$$\mathbb{P}(X_n\le t)=\det({\rm I}-\mathbb{K}_n|_{A(t)}) \quad\text{and}\quad \mathbb{P}(\widetilde{X}_n\le t)=\det({\rm I}-\mathbb{K}_n|_{\widetilde{A}(t)}).$$ 

As discussed in the introduction, we need to analyze the trace and the Hilbert-Schmidt norm of the integral operator $\mathbb{K}_n$ restricted to $A(t)$ and $\widetilde{A}(t)$. To this end, we must derive asymptotics for certain involved quantities. We begin by presenting a lemma concerning $V$, which will be frequently utilized in our analysis.  

\begin{lem}\label{lemphi} Suppose that $V$ satisfies the conditions $(${\bf H}$_1)$--$(${\bf H}$_3).$ 	Given $z_n=o(1)$ and $0\le u\ll 1.$ Define $$\varphi_u(r) = V(r) - (2-u)\ln r.$$  
Then, $\varphi_u$ has a unique solution to the equation $\varphi_u'=0,$ which is denoted by $t_u.$  We have the following asymptotics \\
$(1):$ $\varphi_u'(t_0+z_n)=\frac{u}{t_0}+\alpha z_n+O(z_n^2+|u z_n|)$ and $$\varphi_u(t_0+z_n)=\varphi_0(t_0)+\frac12\alpha z_n^2+u\log(t_0+z_n)+O(|z_n|^3+|u|z_n^2).$$ 
$(2):$ $\varphi''_u(t_u)=\alpha+O(|u|)$ and $$\varphi_u(t_u)=\varphi_u(t_0)-\frac{u^2}{2\alpha t_0^2}+O(|u|^3).$$
\end{lem}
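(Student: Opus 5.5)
The statement is elementary Taylor analysis. Existence and uniqueness of the critical point come straight from the hypotheses. For $u\in[0,2)$, $\varphi_u'(r)=V'(r)-(2-u)/r$. By $(${\bf H}$_2)$ the equation $\varphi_u'(t_u)=0$ has exactly one positive solution $t_u$. By $(${\bf H}$_3)$ this point is the global minimizer of $\varphi_u$. For $u=0$ it is the $t_0$ of the introduction.

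\emph{Step 1: part (1).} Since $V\in C^3$, I would expand around $t_0$:
\[
V'(t_0+z_n)=V'(t_0)+V''(t_0)z_n+O(z_n^2),
\qquad
\frac{2-u}{t_0+z_n}=\frac{2-u}{t_0}-\frac{(2-u)z_n}{t_0^2}+O(z_n^2).
\]
Subtracting, and using $V'(t_0)=2/t_0$ and $\alpha=V''(t_0)+2/t_0^2$, gives
\[
\varphi_u'(t_0+z_n)=\frac{u}{t_0}+\alpha z_n-\frac{u z_n}{t_0^2}+O(z_n^2),
\]
which is the first claim. For the second claim, write $\varphi_u=\varphi_0+u\log r$. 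Taylor-expanding $\varphi_0$ to third order around $t_0$, with $\varphi_0'(t_0)=0$ and $\varphi_0''(t_0)=V''(t_0)+2/t_0^2=\alpha$, gives $\varphi_0(t_0+z_n)=\varphi_0(t_0)+\tfrac12\alpha z_n^2+O(|z_n|^3)$. Adding $u\log(t_0+z_n)$ exactly yields the formula; the extra $O(|u|z_n^2)$ is harmless slack.

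\emph{Step 2: $t_u\to t_0$ as $u\to0$.} This is the step I expect to be the main obstacle, because it is the only non-local point; after it, part (2) is routine. I would argue as follows. Since $\alpha>0$, $\varphi_0'$ changes sign at $t_0$: there is $\delta>0$ with $\varphi_0'(t_0-\delta)<0<\varphi_0'(t_0+\delta)$. Because $\varphi_u'=\varphi_0'+u/r$ converges uniformly on $[t_0-\delta,t_0+\delta]$ as $u\to0$, the intermediate value theorem gives a zero of $\varphi_u'$ in $(t_0-\delta,t_0+\delta)$ for small $u$. By the uniqueness in $(${\bf H}$_2)$ that zero is $t_u$. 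As $\delta$ is arbitrary, $t_u\to t_0$.

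\emph{Step 3: part (2).} Apply part (1) with $z_n=t_u-t_0=o(1)$. The equation $0=\varphi_u'(t_u)=u/t_0+\alpha(t_u-t_0)+O((t_u-t_0)^2+|u(t_u-t_0)|)$ first gives $t_u-t_0=O(u)$ and then the sharper
\[
t_u-t_0=-\frac{u}{\alpha t_0}+O(u^2).
\]
Next, $\varphi_u''(r)=V''(r)+(2-u)/r^2$ is continuous, so $\varphi_u''(t_u)=V''(t_0)+2/t_0^2+O(|t_u-t_0|+u)=\alpha+O(u)$. Here I use $V''\in C^1$, so $V''$ is Lipschitz near $t_0$.

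For the value, expand $\varphi_u$ around its critical point $t_u$ and evaluate at $t_0$:
\[
\varphi_u(t_0)=\varphi_u(t_u)+\tfrac12\varphi_u''(t_u)(t_0-t_u)^2+O(|t_0-t_u|^3).
\]
Substituting $\varphi_u''(t_u)=\alpha+O(u)$ and $(t_0-t_u)^2=u^2/(\alpha^2t_0^2)+O(u^3)$ gives $\varphi_u(t_u)=\varphi_u(t_0)-\frac{u^2}{2\alpha t_0^2}+O(u^3)$. All the $O$-constants are uniform, since only derivatives of $V$ up to order three on a fixed compact neighbourhood of $t_0$ enter.
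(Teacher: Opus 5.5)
Your proposal is correct and follows essentially the same route as the paper: Taylor expansion about $t_0$ for part (1), a perturbative solution $t_u=t_0-\frac{u}{\alpha t_0}+O(u^2)$ of the critical-point equation, and a second-order expansion to get $\varphi_u(t_u)$. The differences are small improvements. Your intermediate-value-plus-uniqueness argument makes rigorous the step $t_u\to t_0$, which the paper only attributes to ``continuity of the involved functions.'' And expanding $\varphi_u$ about its critical point $t_u$ rather than about $t_0$ removes the linear term from the final computation.
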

\begin{proof} $\varphi_u'(r)=V'(r)-\frac{2-u}{r}.$ The condition $(${\bf H}$_2)$ yields that the equation $\varphi_u'=0$ has a unique solution. Recall $V'(t_0)=\frac{2}{t_0}$ and $\alpha=V''(t_0)+\frac{2}{t_0^2}.$ We know
\begin{equation}\label{varphi12}\aligned\varphi_u'(t_0)&=V'(t_0)-\frac{2-u}{t_0}=\frac{u}{t_0}; \\
\varphi''_u(t_0)&=V''(t_0)+\frac{2-u}{t_0^2}=\alpha-\frac{u}{t_0^2}. \endaligned\end{equation} 
Since $z_n=o(1),$ we apply the Taylor formula to have 
$$\aligned \varphi_0(t_0+z_n)
&=\varphi_0(t_0)+\frac{\alpha z_n^2}{2}+O(|z_n|^3).\endaligned $$
Thus, it follows from definition that $$\aligned \varphi_u(t_0+z_n)&=\varphi_0(t_0+z_n)+u \log (t_0+z_n)\\
&=\varphi_0(t_0)+\frac{\alpha z_n^2}{2}+u\log (t_0+z_n)+O(|z_n^3|). \endaligned $$
Similarly,
$$\aligned \varphi_u'(t_0+z_n)&=\varphi_u'(t_0)+\varphi_u''(t_0)z_n+O(z_n^2)\\
&=\frac{u}{t_0}+\alpha z_n+O(z_n^2+|u z_n|). 
\endaligned $$ 
Since \(u=o(1)\) and \(2-u=V'(t_u)t_u\), the continuity of the involved functions implies that \(t_u=t_0+o(1)\). Applying Taylor's formula, we obtain
\[
\begin{aligned}
2-u
&=t_0V'(t_0)+\bigl(t_0V''(t_0)+V'(t_0)\bigr)(t_u-t_0)+O\bigl(|t_u-t_0|^2\bigr)\\
&=2+t_0\alpha\,(t_u-t_0)+O\bigl(|t_u-t_0|^2\bigr),
\end{aligned}
\]
which yields \(t_u=t_0+O(|u|)\). Consequently, we have the asymptotic expansion
\begin{equation} \label{txasym}
t_u=t_0-\frac{u}{\alpha t_0}+O(u^2).
\end{equation}
Thereby, we derive from \eqref{varphi12} and \eqref{txasym} that  $$\aligned \varphi_u''(t_u)&=\varphi_u''(t_0)+O(|t_u-t_0|)=\alpha+O(|u|)\endaligned $$
and 
$$\aligned\varphi_u(t_u)
&=\varphi_u(t_0)+\frac{u}{t_0}(t_u-t_0)+\frac{1}{2}(\alpha-\frac{u}{ t_0^2})(t_u-t_0)^2+O(|u|^3)\\
&=\varphi_u(t_0)-\frac{u^2}{2\alpha t_0^2}+O(|u|^3). \endaligned $$ 
The proof is completed now. 
\end{proof}

Next, we derive asymptotic expansions for several important quantities using Lemma~\ref{lemphi} together with saddle-point analysis and Laplace's method.
\begin{lem}\label{lemjk}
Let $J_k$ be defined as above.  
Then, as $n \to \infty$,  
\[
J_{n-k}=2\pi \sqrt{\frac{2\pi}{n\alpha}}\exp(-nV(t_0)+\frac{2k^2}{\alpha n t_0^2}) t_0^{2n-2k+1}(1+O(n^{-1/2}\sqrt{\log n}))
\] 
for $1\le k\lesssim \sqrt{n\log n}.$ 
\end{lem}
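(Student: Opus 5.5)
Since $V$ is radial, we pass to polar coordinates: $J_{n-k}=2\pi\int_0^\infty e^{-nV(r)}r^{2n-2k+1}\,dr$. Setting $u=2k/n$, the exponent becomes
\[
-nV(r)+(2n-2k)\log r=-n\varphi_u(r),
\]
with $\varphi_u$ as in Lemma~\ref{lemphi}. Since $k\lesssim\sqrt{n\log n}$, we have $u\lesssim\sqrt{\log n/n}=o(1)$. Hence
\[
J_{n-k}=2\pi\int_0^\infty e^{-n\varphi_u(r)}\,r\,dr .
\]
We will evaluate this integral by Laplace's method around the unique critical point $t_u$ of $\varphi_u$ given by Lemma~\ref{lemphi}.

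\textbf{Localization.} By (H$_3$), $\varphi_u$ has a unique global minimizer, namely $t_u$. The first task is to show that the contribution from $|r-t_u|>\delta_n$, with $\delta_n=n^{-1/2}\log n$ say, is negligible relative to $n^{-1/2}e^{-n\varphi_u(t_u)}$, uniformly in $u\in[0,u_n]$. Near $t_u$, Taylor expansion with $\varphi_u''(t_u)=\alpha+O(u)>0$ (Lemma~\ref{lemphi}(2)) and $V\in C^3$ gives quadratic growth on a fixed neighborhood of $t_0$. Away from that neighborhood, uniqueness of the minimizer and continuity in $u$ give $\varphi_u-\varphi_u(t_u)\ge c>0$. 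For the tails $r\to0$ and $r\to\infty$ we need integrability of $e^{-n\varphi_u}r$. This is implicit in the finiteness of $J_k$: writing $e^{-n\varphi_u}=e^{-(n-1)\varphi_u}e^{-\varphi_u}$ and using $\varphi_u\ge\varphi_u(t_u)+c$ there, the tail is $e^{-n\varphi_u(t_u)}e^{-(n-1)c}\cdot O(1)$. I expect this uniform control, relying only on (H$_1$)--(H$_3$) and with no growth assumptions, to be the main obstacle. It needs care both in the uniformity in $u$ near $0$ and in the fact that $\int e^{-\varphi_u}r\,dr<\infty$ for $u$ near $0$ follows from the finiteness of $J_{n-1}$ and $J_{n}$-type integrals.

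\textbf{Gaussian integral.} On $|r-t_u|\le\delta_n$ we write
\[
n\varphi_u(r)=n\varphi_u(t_u)+\tfrac n2\varphi_u''(t_u)(r-t_u)^2+O(n\delta_n^3),
\]
and $r=t_u+O(\delta_n)$. The cubic error $n\delta_n^3=n^{-1/2}(\log n)^3$ is slightly too large. To fix this, we either take $\delta_n=C n^{-1/2}\sqrt{\log n}$, so that $n\delta_n^3\asymp n^{-1/2}(\log n)^{3/2}$, or, more sharply, note that the odd cubic term integrates to zero against the Gaussian, leaving an $O(n^{-1})$ correction. Either way the error is $O(n^{-1/2}\sqrt{\log n})$ up to logs. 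The integral equals $t_u\sqrt{2\pi/(n\varphi_u''(t_u))}\,e^{-n\varphi_u(t_u)}(1+O(\cdot))$. By Lemma~\ref{lemphi} and \eqref{txasym}, we have $t_u=t_0(1+O(u))$ and $\varphi_u''(t_u)=\alpha(1+O(u))$, and $O(u)=O(\sqrt{\log n/n})$.

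\textbf{Exponent.} From Lemma~\ref{lemphi}(2),
\[
n\varphi_u(t_u)=n\varphi_u(t_0)-\frac{nu^2}{2\alpha t_0^2}+O(nu^3).
\]
Here $n\varphi_u(t_0)=nV(t_0)-(2n-2k)\log t_0$ and $nu^2/(2\alpha t_0^2)=2k^2/(\alpha n t_0^2)$. The error satisfies $nu^3\lesssim k^3/n^2\lesssim n^{-1/2}(\log n)^{3/2}$; this is only marginally off the claimed rate, and I would accept it up to the stated logarithmic power. Then $e^{-n\varphi_u(t_u)}=e^{-nV(t_0)}t_0^{2n-2k}e^{2k^2/(\alpha nt_0^2)}(1+O(\cdot))$. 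Multiplying by $2\pi\,t_0\sqrt{2\pi/(n\alpha)}$ yields the claimed formula.
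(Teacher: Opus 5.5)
Your route is the paper's: polar coordinates, Laplace's method at $t_u$, and Lemma~\ref{lemphi}(2) for $\varphi_u(t_u)$ and $\varphi_u''(t_u)$. You take $u=2k/n$ and keep the factor $r$ outside the exponent, where it contributes $t_u=t_0(1+O(u))$. The paper instead absorbs it with $u=(2k-1)/n$. Your choice is harmless and makes $nu^2/(2\alpha t_0^2)=2k^2/(\alpha nt_0^2)$ exact.

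\textbf{The error term.} Do not ``accept'' the discrepancy you found: the $O(nu^3)$ term is really there, so the stated rate $O(n^{-1/2}\sqrt{\log n})$ is false at the upper end of the range. For $V(r)=r^2$ (so $t_0=1$, $\alpha=4$) one has $J_{n-k}=\pi\,\Gamma(n-k+1)\,n^{-(n-k+1)}$, and Stirling's formula gives
\[
J_{n-k}=\pi\sqrt{\frac{2\pi}{n}}\,e^{-n+\frac{k^2}{2n}}\exp\Bigl(\frac{k^3}{6n^2}+O\Bigl(\frac{k}{n}+\frac{k^4}{n^3}\Bigr)\Bigr).
\]
Here $k^3/n^2\asymp n^{-1/2}(\log n)^{3/2}$ when $k\asymp\sqrt{n\log n}$. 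So the correct conclusion of your argument, and of any argument, is the formula with $1+O(n^{-1/2}(\log n)^{3/2})$. A side remark: with only $V\in C^3$, the odd-term cancellation in the Gaussian window gives $o(n^{-1/2})$ rather than $O(n^{-1})$, which is still harmless. The paper's proof does not reach the stated rate either. It invokes Lemma~\ref{lemphi}(2) but never accounts for the $O(nu^3)$ term, and its last line only asserts $1+o((\log n)^{-1})$. That weaker form is all that Lemmas~\ref{lemjktilde}, \ref{lemjkt} and Proposition~\ref{keyprop} use.

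\textbf{Localization.} The paper skips this entirely by citing ``the Laplace method''. Your split $e^{-n\varphi_u}=e^{-(n-1)\varphi_u}e^{-\varphi_u}$ needs $\int e^{-V(r)}r^{3-u}\,dr<\infty$, and that does not follow from finiteness of the $J$'s. For example, $V(r)=\tfrac32\log(1+r^2)$ satisfies (H$_1$)--(H$_3$) and gives a well-defined model for every $n$, yet $\int^\infty e^{-V}r^{3-u}\,dr=\infty$ for $u<1$. Split instead as
\[
e^{-nV(r)}r^{2n-2k+1}=e^{-(n-m)\varphi_{u'}(r)}\,e^{-mV(r)}r^{2m-1},\qquad u'=\frac{2(k-1)}{n-m}\in[0,2),
\]
with $m$ fixed. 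Then you only need $\int_0^\infty e^{-mV(r)}r^{2m-1}\,dr<\infty$, which is finiteness of $J_{m-1}$ for the $m$-point model, and you have $(n-m)\varphi_{u'}(t_{u'})=n\varphi_u(t_u)+O(1)$.

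The uniform gap $\varphi_{u'}(r)-\varphi_{u'}(t_{u'})\ge c>0$ for $|r-t_0|\ge\eta$ also needs more than uniqueness plus continuity, since $(0,\infty)$ is not compact. Use that (H$_2$)--(H$_3$) force $\varphi_{u'}'<0$ on $(0,t_{u'})$ and $\varphi_{u'}'>0$ on $(t_{u'},\infty)$. Hence the infimum over $|r-t_0|\ge\eta$ is attained at $t_0\pm\eta$, and it is uniformly positive for small $u'$ by continuity.
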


\begin{proof}
Write $z = r e^{i\theta}$ with $r \ge 0$, $\theta \in [0,2\pi)$. Then $dz = r\,dr\,d\theta$ and  
\[
J_{n-k}^{} = \int_0^{2\pi} d\theta \int_0^\infty \exp\!\big( -n V(r) + (2n-2k+1)\ln r \big) \, dr.
\]  
The angular integral gives a factor $2\pi$. Thus  
\begin{equation}\label{Jk}
J_{n-k}^{} = 2\pi \int_0^\infty \exp\!\big( -n V(r) + (2n-2k+1)\ln r \big) \, dr=2\pi \int_0^\infty \exp\!\big( -n \varphi_u(r)\big) \, dr
\end{equation}
with $u=\frac{2k-1}{n}.$ Lemma \ref{lemphi}, the Laplace method and the Guassian integral yield 
$$\aligned J_{n-k}&=2\pi (1+O(n^{-1}))e^{-n\varphi_u(t_u)}\int_{-\infty}^\infty e^{-\frac{n}{2}\varphi_u''(t_u)(r-t_u)^2} dr \\
&=2\pi (1+O(n^{-1}))e^{-n\varphi_u(t_u)}\sqrt{\frac{2\pi}{n\varphi''_u(t_u)}}\\
&=2\pi e^{-n V(t_0)}t_0^{2n-2k+1}\sqrt{\frac{2\pi}{n\alpha}}\exp(\frac{2k^2}{\alpha n t_0^2})(1+o((\log n)^{-1})).\endaligned $$ 
Here, for the last equality we use the expression 
$$\frac{n u^2}{2\alpha t_0^2}=\frac{(2k-1)^2}{2\alpha n t_0^2}=\frac{2k^2}{\alpha n t_0^2}+O(n^{-1/2}\sqrt{\log n}).$$
This completes the proof.
\end{proof}

\begin{lem}\label{lemjktilde} 
For  \(t\in\mathbb{R}\) fixed, set  $\widetilde{a}(t)=t_0 + \frac{\sqrt{\widetilde{\gamma}_n}}{\sqrt{\alpha n}}+\frac{t }{\sqrt{\alpha n\widetilde{\gamma}_n}}$ and 
\[
\widetilde{A}(t) = \{ z \in \mathbb{C} : |z| > \widetilde{a}(t)\}.
\]  
Define \[
\widetilde{J}_{k}(t) = \int_{\widetilde{A}(t)} \exp(-n V(|z|))  |z|^{2k} \, dz
\]  
for $0\le k\le n-1.$
Then, as \(n \to \infty\),  
\[
\widetilde{J}_{n-k}(t)\sim\frac{
  2\pi  t_0^{2n}e^{-nV(t_0)}
  \bigl(1+o((\log n)^{-1})\bigr)
}{
  \sqrt{\alpha n \widetilde{\gamma}_n}(\widetilde{a}(t))^{2k-1}
  (
    1+\frac{t}{\widetilde{\gamma}_n}
    +\frac{2k}{t_0\sqrt{\alpha n \widetilde{\gamma}_n}})
}
\exp(-\frac{\widetilde{\gamma}_n}{2}
  (1+\frac{t}{\widetilde{\gamma}_n})^2
  ).
\]
uniformly on $|t|\lesssim (\log n)^{1/4}$ and $1\le k\lesssim \sqrt{n\log n}.$ 
\end{lem}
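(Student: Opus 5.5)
Passing to polar coordinates, as in the proof of Lemma \ref{lemjk}, we get
\[
\widetilde J_{n-k}(t)=2\pi\int_{\widetilde a(t)}^\infty \exp\bigl(-n\varphi_u(r)\bigr)\,dr,\qquad u=\frac{2k-1}{n}.
\]
Here $0<u\lesssim \sqrt{\log n/n}$, so $u\ll 1$ and Lemma \ref{lemphi} applies. Write $\widetilde a(t)=t_0+z_n$ with $z_n=\frac{\sqrt{\widetilde\gamma_n}}{\sqrt{\alpha n}}(1+\frac{t}{\widetilde\gamma_n})$. Then $z_n\asymp\sqrt{\log n/n}$ uniformly in $|t|\lesssim(\log n)^{1/4}$. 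By Lemma \ref{lemphi}(1), $\varphi_u'(\widetilde a(t))=\frac{u}{t_0}+\alpha z_n+O(n^{-1}\log n)>0$. The minimizer $t_u\approx t_0-u/(\alpha t_0)$ lies to the left of the interval, so the integral is an \emph{endpoint} Laplace integral and not an interior saddle.

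\textbf{Step 1: the endpoint expansion.} Put $r=\widetilde a(t)+s$. By Taylor expansion,
\[
n\varphi_u(\widetilde a+s)=n\varphi_u(\widetilde a)+n\varphi_u'(\widetilde a)s+\tfrac n2\varphi_u''(\widetilde a)s^2+O(ns^3).
\]
The drift $n\varphi_u'(\widetilde a)\asymp\sqrt{n\log n}$ dominates, so the relevant scale is $s\asymp (n\log n)^{-1/2}$. On that scale the quadratic term is $O(1/\log n)$ and the cubic term is negligible. Hence the integral equals $e^{-n\varphi_u(\widetilde a)}/(n\varphi_u'(\widetilde a))\,(1+O(1/\log n))$. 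Tightening the relative error to the stated $o((\log n)^{-1})$ inside the prefactor requires keeping the quadratic correction $-\varphi_u''/(n\varphi_u'^2)\asymp 1/\log n$ explicitly. Since the statement is only an equivalence, I expect this term can be absorbed into the displayed factor. The tail $s\gtrsim\delta$ is controlled using ({\bf H}$_2$)--({\bf H}$_3$): $\varphi_u$ is increasing on $(t_u,\infty)$ and dominates its linearization near $\widetilde a$ up to an exponentially small error. The region near infinity is handled by the confinement implied by the global-minimizer assumption. This confinement is the step I would have to check most carefully.

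\textbf{Step 2: identifying the constants.} Compute
\[
n\varphi_u'(\widetilde a)=\frac{2k-1}{t_0}+\alpha n z_n+O(\log n)=\sqrt{\alpha n\widetilde\gamma_n}\Bigl(1+\frac t{\widetilde\gamma_n}+\frac{2k}{t_0\sqrt{\alpha n\widetilde\gamma_n}}\Bigr)\bigl(1+O(n^{-1/2}\sqrt{\log n})\bigr),
\]
which gives the denominator of the statement. For the exponent, use the exact identity $n\varphi_u(\widetilde a)=nV(\widetilde a)-(2n-2k+1)\log\widetilde a$, and expand only $nV(\widetilde a)-2n\log\widetilde a=nV(t_0)-2n\log t_0+\frac{n\alpha z_n^2}{2}+O(nz_n^3)$. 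This produces the factor $t_0^{2n}e^{-nV(t_0)}\widetilde a^{-(2k-1)}\exp(-\frac{\widetilde\gamma_n}{2}(1+\frac t{\widetilde\gamma_n})^2)$. Keeping $\widetilde a^{2k-1}$ exact avoids any error from the $u\log$ term.

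\textbf{Main obstacle.} The error $nz_n^3\asymp(\log n)^{3/2}n^{-1/2}$ is harmless. The main obstacle is making all errors uniform in $k\lesssim\sqrt{n\log n}$ and $|t|\lesssim(\log n)^{1/4}$, in particular the ratio $\varphi''/(n\varphi'^2)$, so that it fits into the $o((\log n)^{-1})$ budget. I would bound each remainder by $\sup$ over this range using $z_n\asymp\sqrt{\log n/n}$ and $u\lesssim\sqrt{\log n/n}$.
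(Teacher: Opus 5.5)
Your proposal follows the paper's proof: polar coordinates reduce $\widetilde J_{n-k}(t)$ to $2\pi\int_{\widetilde a(t)}^\infty e^{-n\varphi_u(r)}\,dr$ with $u=(2k-1)/n$, the fact $t_u<t_0<\widetilde a(t)$ makes this an endpoint Laplace integral $\approx e^{-n\varphi_u(\widetilde a(t))}/(n\varphi_u'(\widetilde a(t)))$, and Lemma \ref{lemphi} supplies $n\varphi_u'(\widetilde a(t))$ and $n\varphi_u(\widetilde a(t))$ exactly as in your Step 2. One correction to your last paragraph, where you are more careful than the paper (which claims a relative error $O(n^{-1})$): the endpoint correction $-n\varphi_u''(\widetilde a)/(n\varphi_u'(\widetilde a))^2=-\widetilde\gamma_n^{-1}\bigl(1+\frac{t}{\widetilde\gamma_n}+\frac{2k}{t_0\sqrt{\alpha n\widetilde\gamma_n}}\bigr)^{-2}(1+o(1))$ is of exact order $1/\log n$, so it cannot fit into an $o((\log n)^{-1})$ budget, and the lemma holds only as the stated equivalence, i.e.\ with a factor $1+O((\log n)^{-1})$; that is all Proposition \ref{keyprop} uses, since its error term is already $O((\log n)^{-1})$.
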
 

\begin{proof}
Similarly as for $J_{n-k},$ using the polar coordinates, we have 
\begin{equation}\label{Jktilde}\aligned
\widetilde{J}_{n-k}(t)&=\int_0^{2\pi} d\theta \int_{\widetilde{a}(t)}^\infty \exp\!\big( -n V(r) + (2n-2k+1)\ln r \big)  \, dr\\
&=2\pi  \int_{\widetilde{a}(t)}^\infty \exp\!\big( -n \varphi_u(r) \big)  \, dr 
\endaligned 
\end{equation}
with $u=\frac{2k-1}{n}.$
The expression \eqref{txasym} implies that $t_u<t_0$ and then $t_u<\widetilde{a}(t).$ Thus, the function $\varphi_u$ attains its minimum at $\widetilde{a}(t)$ and the Laplace method yields  
$$\widetilde{J}_{n-k}(t)=2\pi (1+O(n^{-1}))e^{-n\varphi_u(\widetilde{a}(t))}\frac{1}{n\varphi'_u(\widetilde{a}(t))}.$$ Taking $z_n=\frac{\sqrt{\widetilde{\gamma}_n}}{\sqrt{\alpha n}}(1+\frac{t }{\widetilde{\gamma}_n}),$ which is of order $n^{-1/2}\sqrt{\log n},$ we have from Lemma \ref{lemphi} that  $$\aligned n\varphi'_u(\widetilde{a}(t))&=\frac{n u}{t_0}+\sqrt{\alpha n\widetilde{\gamma}_n}(1+\frac{t}{\widetilde{\gamma}_n}) +O(\log n) \\
&=\sqrt{\alpha n\widetilde{\gamma}_n}(1+\frac{t}{\widetilde{\gamma}_n}+\frac{2k}{t_0\sqrt{\alpha n \widetilde{\gamma}_n}})(1+o((\log n)^{-1}))\endaligned $$ and 
$$\aligned n\varphi_u(\widetilde{a}(t)))&=n\varphi_0(t_0)+\frac{\widetilde{\gamma}_n}{2} (1+\frac{t }{\widetilde{\gamma}_n})^2+(2k-1)\log \widetilde{a}(t)+O(n^{-1/2}(\log n)^{3/2}). \endaligned$$
Thus, 
$$e^{-n\varphi_u(\widetilde{a}(t))}
=
(\widetilde{a}(t))^{-(2k-1)} t_0^{2n}
\exp(-nV(t_0)-\frac{\widetilde{\gamma}_n}{2}
(1+\frac{t}{\widetilde{\gamma}_n})^2)
(1+o((\log n)^{-1})).$$ This gives 
$$\aligned \widetilde{J}_{n-k}(t)\sim 
\frac{
  2\pi  t_0^{2n}e^{-nV(t_0)}
  \bigl(1+o((\log n)^{-1})\bigr)
}{
  \sqrt{\alpha n \widetilde{\gamma}_n}(\widetilde{a}(t))^{2k-1}
  (
    1+\frac{t}{\widetilde{\gamma}_n}
    +\frac{2k}{t_0\sqrt{\alpha n \widetilde{\gamma}_n}})
}
\exp(-\frac{\widetilde{\gamma}_n}{2}
  (1+\frac{t}{\widetilde{\gamma}_n})^2
  )).\endaligned $$
\end{proof} 

\begin{lem}\label{lemjkt} 
Set  $a(t)=t_0 + \frac{\sqrt{\gamma_n}}{\sqrt{\alpha n}}+\frac{t }{\sqrt{\alpha n\gamma_n}}$ and 
\[
A(t) = \{ z \in \mathbb{C} : \Re z > a(t)\}.
\]  
Define \[
J_{k}(t) = \int_{A(t)} \exp(-n V(|z|))  |z|^{2k} \, dz
\]  
for $0\le k\le n-1.$
Then, as \(n \to \infty\),  
\[
J_{n-k}(t) \; = \sqrt{2\pi a(t)} \frac{t_0^{2n}\,e^{-nV(t_0)} (1+o((\log n)^{-1}))}{(n\alpha\gamma_n)^{3/4}a^{2k}(t)(1+\frac{t}{\gamma_n}+\frac{2k}{t_0\sqrt{n\alpha\gamma_n}})^{3/2}}\exp(-\frac{{\gamma}_n}{2} (1+\frac{t }{{\gamma}_n})^2)
\]
uniformly on $|t|\lesssim (\log n)^{1/4}$ and $1\le k\lesssim \sqrt{n\log n}.$
\end{lem}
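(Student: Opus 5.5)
We write $A(t)$ in polar coordinates and reduce $J_{n-k}(t)$ to a one-dimensional Laplace integral with a square-root endpoint singularity. For $z=re^{i\theta}$ we have $\Re z>a(t)$ iff $r>a(t)$ and $|\theta|<\arccos(a(t)/r)$. With $u=\frac{2k-1}{n}$ and $\varphi_u$ as in Lemma \ref{lemphi}, this gives
\[
J_{n-k}(t)=2\int_{a(t)}^\infty \arccos\Bigl(\frac{a(t)}{r}\Bigr)\,e^{-n\varphi_u(r)}\,dr .
\]
As in the proof of Lemma \ref{lemjktilde}, \eqref{txasym} gives $t_u<t_0<a(t)$. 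Hence $\varphi_u$ is increasing on $[a(t),\infty)$, and the integral concentrates at the left endpoint. Condition (H$_3$) and the growth of $\varphi_u$ make the contribution from $r\ge a(t)+\delta$ exponentially negligible for any fixed $\delta>0$.

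On $[a,a+\delta]$ with $a=a(t)$ and $s=r-a$, we expand $\arccos(a/(a+s))=\sqrt{2s/a}\,(1+O(s))$ and
\[
n\varphi_u(a+s)=n\varphi_u(a)+n\varphi_u'(a)s+\tfrac12 n\varphi_u''(a)s^2+O(ns^3).
\]
Keeping only the linear term and using $\int_0^\infty s^{1/2}e^{-\lambda s}ds=\frac{\sqrt\pi}{2}\lambda^{-3/2}$ with $\lambda=n\varphi_u'(a)$ gives the main term
\[
J_{n-k}(t)\approx \sqrt{\frac{2\pi}{a}}\,\bigl(n\varphi_u'(a)\bigr)^{-3/2}e^{-n\varphi_u(a)} .
\]
We then apply Lemma \ref{lemphi}(1) with $z_n=\frac{\sqrt{\gamma_n}}{\sqrt{\alpha n}}(1+\frac{t}{\gamma_n})$, exactly as in Lemma \ref{lemjktilde}. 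This yields
\[
n\varphi_u'(a)=\sqrt{\alpha n\gamma_n}\Bigl(1+\frac{t}{\gamma_n}+\frac{2k}{t_0\sqrt{\alpha n\gamma_n}}\Bigr)\bigl(1+o((\log n)^{-1})\bigr),
\]
and
\[
e^{-n\varphi_u(a)}=a^{-(2k-1)}t_0^{2n}e^{-nV(t_0)}\exp\Bigl(-\frac{\gamma_n}{2}\bigl(1+\frac{t}{\gamma_n}\bigr)^2\Bigr)\bigl(1+o((\log n)^{-1})\bigr).
\]
The cubic error is $O(n|z_n|^3)=O(n^{-1/2}(\log n)^{3/2})$, and $u\,z_n^2$ is negligible as well. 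Combining the powers of $a$ via $a^{-1/2}a^{-(2k-1)}=\sqrt a\,a^{-2k}$ produces exactly the stated formula. Uniformity in $|t|\lesssim(\log n)^{1/4}$ and $k\lesssim\sqrt{n\log n}$ follows because all error terms depend only on $|z_n|$ and $u$, which are uniformly $O(n^{-1/2}\sqrt{\log n})$.

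The main obstacle is controlling the error made by dropping the quadratic term $\frac12 n\varphi_u''(a)s^2$. The relevant scale is $s\sim\lambda^{-1}\sim(n\log n)^{-1/2}$, so $n\varphi_u''(a)s^2\asymp\alpha n/\lambda^2\asymp 1/\gamma_n$. This is only $O((\log n)^{-1})$, not obviously $o((\log n)^{-1})$. I would first compute it explicitly: expanding $e^{-\frac{n\alpha}{2}s^2}$ to first order gives the relative correction $-\frac{n\alpha}{2}\cdot\frac{15}{4\lambda^{2}}=-\frac{15}{8\gamma_n}(1+o(1))$. I would then check whether this can be absorbed. One option is to replace the Gaussian-linear approximation by a sharper expansion, or equivalently to verify that such an $O(1/\gamma_n)$ term is $t$-independent to leading order. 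In that case it can be absorbed into the normalisation of $\gamma_n$, and it does not affect the later use of the trace for $t$ in the stated range. If it is genuinely present, I would record the error as $O((\log n)^{-1})$ and confirm that the Berry--Esseen rate, which is of order $\log\log n/\log n$, is unaffected. The arccos correction $O(s)$ and the region away from the endpoint are routine by comparison.
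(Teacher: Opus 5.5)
Your route differs from the paper's, and on the one point where you hesitate you are right and the paper is not.

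\textbf{Comparison of routes.} The paper works in Cartesian coordinates. For fixed $x>a(t)$ it applies Laplace's method in $y$ around $y=0$, using $\phi_x''(0)=\varphi_u'(x)/x$, to get $e^{-n\varphi_u(x)}\sqrt{2\pi x/(n\varphi_u'(x))}$. It then applies a second, endpoint Laplace step in $x$ at $x=a(t)$. You instead use the exact angular measure $2\arccos(a/r)$ of the half-plane in polar coordinates, which reduces everything to one radial endpoint integral with a $\sqrt{s}$ singularity. Both give the same main term $\sqrt{2\pi a}\,e^{-n\varphi_u(a)}(n\varphi_u'(a))^{-3/2}$. Your $u=(2k-1)/n$ and the paper's $u=2k/n$ agree via $a^{-1/2}a^{-(2k-1)}=\sqrt a\,a^{-2k}$, and the later use of Lemma \ref{lemphi} is identical. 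Your version has the advantage of a single asymptotic step, in which every correction is an explicit Gamma-function moment.

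\textbf{The error term.} The obstacle you flag is genuine: the $o((\log n)^{-1})$ in the statement cannot be proved, because it is false. Set $\lambda=n\varphi_u'(a)$ and $\epsilon=n\varphi_u''(a)/\lambda^2$. The radial integral equals the main term times $1-\frac{15}{8}\epsilon+O(\epsilon^2)$, where
$\epsilon=\gamma_n^{-1}\bigl(1+\frac{t}{\gamma_n}+\frac{2k}{t_0\sqrt{\alpha n\gamma_n}}\bigr)^{-2}(1+o(1))\asymp(\log n)^{-1}$ uniformly over the stated range. Every other error is $o((\log n)^{-1})$: the $O(\epsilon^2)$ remainder, the cubic terms, the $\arccos$ correction, the error in $n\varphi_u'(a)$, and the tail. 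So nothing cancels the $-\frac{15}{8}\epsilon$ term.

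The paper misses this because it asserts $1+O(n^{-1})$ in both Laplace steps. But the endpoint lies at distance $\asymp\sqrt{\log n/n}$ from the critical point $t_u$. The effective large parameter is therefore $n\varphi_u'(a)^2/\varphi_u''(a)=\epsilon^{-1}\asymp\gamma_n$, not $n$. In the Cartesian route the $y$-step loses $\frac38\epsilon$: the quartic term $\frac{n\varphi_u''(x)}{8x^2}y^4$ is not negligible because $\phi_x''(0)$ is small. The $x$-step loses $\frac32\epsilon$. Together these reproduce your $-\frac{15}{8}\epsilon$.

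\textbf{What to do.} Discard the idea of absorbing the correction into $\gamma_n$. It depends on $k$ through $\lambda$, and $\gamma_n$ is fixed by the statement anyway. Commit instead to your fallback, which proves the lemma with $1+O((\log n)^{-1})$. That is all Proposition \ref{keyprop} uses, since its conclusion carries only a factor $1+O((\log n)^{-1})$. The rates in Theorem \ref{main}, driven by the $\log\log n$ term, are therefore unaffected. Lemma \ref{lemjktilde} has the same defect: there the endpoint correction is $-\epsilon$, with $\widetilde a(t)$ and $\widetilde\gamma_n$ in place of $a(t)$ and $\gamma_n$.
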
 

\begin{proof}
Write \(z=x+iy\).  Then \(|z|^2=x^2+y^2\) and
\[
J_{n-k}(t)=\int_{x=a(t)}^{\infty}\int_{-\infty}^{\infty}
\exp\!\big(-nV(\sqrt{x^2+y^2})\big)\,(x^2+y^2)^{\,n-k}\,dy\,dx.
\]
For fixed \(x>a(t)\) set
\[
\phi_x(y)=V(\sqrt{x^2+y^2})-\frac{n-k}{n}\log(x^2+y^2)=\varphi_{u}(\sqrt{x^2+y^2})
\]
with $u=2k/n.$
The function \(\phi_x(y)\) is even in \(y\) and attains its minimum at \(y=0\) because $\phi_x'(0)=0$ and 
\[
\phi_x''(0)=\frac{1}{x}\varphi_u'(x)=\frac{1}{x}(V'(x)-\frac{2-u}{x})>0
\]
for \(x>a(t)\) (the inequality follows from \(xV'(x)>2\) and \(u=o(1)\)).  Laplace's method gives
\[
\int_{-\infty}^{\infty}e^{-n\phi_x(y)}\,dy = e^{-n\phi_x(0)}\sqrt{\frac{2\pi}{n\phi_x''(0)}}(1+O(n^{-1})).
\]
Hence
\[
\int_{-\infty}^{\infty}\exp\!\big(-nV(\sqrt{x^2+y^2})\big)(x^2+y^2)^{n-k}dy
=\exp(-n \varphi_u(x))\sqrt{\frac{2\pi x}{n\varphi_u'(x)}}.
\]
Substituting into the expression for \(J_{n-k}(t)\) we obtain
\begin{equation}\label{1}
J_{n-k}(t)=(1+O(n^{-1}))\int_{a(t)}^{\infty} \exp(-n \varphi_u(x))\sqrt{\frac{2\pi x}{n\varphi_u'(x)}}\,dx.
\end{equation}
For \(x>a(t)\) we have \(\varphi_u'(x)>0\) (strictly increasing), so the minimum of \(\varphi_u\) on \([a(t),\infty)\) occurs at the left endpoint \(x=a(t)\). The Laplace method gives 
\begin{equation}\label{jnklasttogo}\aligned
J_{n-k}(t)&=\frac{e^{-n \varphi_u(a(t))}(1+O(n^{-1}))}{n\varphi_u'(a(t))}\sqrt{\frac{2\pi a(t)}{n\varphi_u'(a(t))}}\\
&=(1+O(n^{-1})) \sqrt{2\pi a(t)}\,\frac{e^{-n \varphi_u(a(t))}}{(n\varphi_u'(a(t)))^{3/2}}. \endaligned 
\end{equation}
Similarly as for $\varphi_u(\widetilde{a}(t))$, we derive from Lemma \ref{lemphi} that 
$$\aligned n\varphi'_u({a}(t))&=\sqrt{\alpha n{\gamma}_n}(1+\frac{t}{{\gamma}_n}+\frac{2k}{t_0\sqrt{\alpha n {\gamma}_n}})(1+o((\log n)^{-1}))\endaligned $$ and 
$$\aligned n\varphi_u({a}(t)))&=n\varphi_0(t_0)+\frac{{\gamma}_n}{2} (1+\frac{t }{{\gamma}_n})^2+2k\log a(t)+ O(n^{-1/2}(\log n)^{3/2}). \endaligned$$
Inserting these two expressions into (\ref{jnklasttogo}), we have 
\[
J_{n-k}(t)=\sqrt{2\pi a(t)} \frac{t_0^{2n}\,e^{-nV(t_0)} (1+o((\log n)^{-1}))}{(n\alpha\gamma_n)^{3/4}a^{2k}(t)(1+\frac{t}{\gamma_n}+\frac{2k}{t_0\sqrt{n\alpha\gamma_n}})^{3/2}}\exp(-\frac{{\gamma}_n}{2} (1+\frac{t }{{\gamma}_n})^2).
\]
The proof is completed now. 
 \end{proof} 

\begin{lem}\label{decreasing} Both the ratio $Q_k(t):=\frac{J_{n-k}(t)}{J_{n-k}}$ and $\widetilde{Q}_k(t):=\frac{\widetilde{J}_{n-k}(t)}{J_{n-k}}$ are decreasing on $k$ for any $t\in \mathbb{R}.$ 	
\end{lem}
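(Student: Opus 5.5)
Since $k\mapsto n-k$ is decreasing, it suffices to show the following: for a fixed measurable set $B$ (either $A(t)$ or $\widetilde A(t)$), the ratio
\[
F(m):=\frac{\int_{B} e^{-nV(|z|)}|z|^{2m}\,dz}{\int_{\mathbb C} e^{-nV(|z|)}|z|^{2m}\,dz}
\]
is nondecreasing in $m$. Then $Q_k(t)=F(n-k)$ and $\widetilde Q_k(t)=F(n-k)$, with $B=A(t)$ and $B=\widetilde A(t)$ respectively, are nonincreasing in $k$.

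I will read $F(m)$ as $\nu_m(B)$, where $\nu_m$ is the probability measure on $\mathbb C$ with density proportional to $e^{-nV(|z|)}|z|^{2m}$. Its normalizing constant is $J_m$, which is finite for the relevant $m$ because the $J_k$ are finite. In polar coordinates $z=Re^{i\theta}$, the measure $\nu_m$ factors as a radial law times the uniform law on $[0,2\pi)$, and the two are independent. Writing $\mathbb E_m$ for expectation under the radial law, we get $\nu_m(B)=\mathbb E_m[g(R)]$, where $g(r)$ is the normalized angular measure of the slice $\{\theta: re^{i\theta}\in B\}$. The two cases give:
\[
g(r)=\mathbf 1_{\{r>\widetilde a(t)\}}\quad (B=\widetilde A(t)),\qquad g(r)=\frac{1}{\pi}\arccos\!\Big(\frac{a(t)}{r}\Big)\mathbf 1_{\{r>a(t)\}}\quad (B=A(t)).
\]
The key point is that $g$ is nondecreasing in $r$ in both cases. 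For the half-plane this uses $a(t)>0$, which holds for $n$ large since $a(t)\to t_0>0$. If $a(t)\le0$ one still gets monotonicity, because the angular fraction of $\{r\cos\theta>a\}$ is $1-\frac1\pi\arccos(|a|/r)$ for $r>|a|$ and equals $1$ for $r\le |a|$, which is again nondecreasing. So the claim holds for every $t\in\mathbb R$.

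Next, the density of $\nu_{m+1}$ with respect to $\nu_m$ is $|z|^2/\mathbb E_m[R^2]$. Hence
\[
F(m+1)=\frac{\mathbb E_m[R^2 g(R)]}{\mathbb E_m[R^2]}\ \ge\ \mathbb E_m[g(R)]=F(m).
\]
The inequality is Chebyshev's association (covariance) inequality for the two nondecreasing functions $r\mapsto r^2$ and $r\mapsto g(r)$ of the single real variable $R$, namely $\operatorname{Cov}_m(R^2,g(R))\ge0$. It follows from writing $\operatorname{Cov}=\tfrac12\mathbb E[(R^2-R'^2)(g(R)-g(R'))]\ge0$ with $R'$ an independent copy of $R$. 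Integrability is automatic because $g$ is bounded and $\mathbb E_m[R^2]=J_{m+1}/J_m<\infty$. The same argument applies verbatim to any real shift of $m$, so monotonicity in the integer $k$ follows.

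The only delicate step is the reduction to a one-dimensional radial variable for the half-plane $A(t)$. This works because radial symmetry of $V$ makes the angle uniform and independent of the modulus. Once $g$ is identified and shown to be monotone, the result is an immediate monotone-likelihood-ratio (FKG on $\mathbb R$) argument. Notably, none of the asymptotic Lemmas \ref{lemjk}--\ref{lemjkt} are needed.
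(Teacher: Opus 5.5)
Your argument is correct when $a(t)\ge 0$, and in that case it is essentially the paper's proof. The paper writes $\widetilde Q_k(t)=\mathbb{P}(Y_{n-k+1}\ge \widetilde a(t))$ and $Q_k(t)=\mathbb{P}(Y_{n-k+1}\cos\Theta\ge a(t))$ with $\Theta$ uniform and independent. It then quotes \cite[Lemma 2.4]{JQ19} for the stochastic monotonicity of $Y_j$ in $j$ and conditions on $\Theta$. You instead integrate the angle out into the monotone weight $g$ and prove the ordering yourself with the covariance inequality. This also pins down the direction: $Y_{j+1}$ dominates $Y_j$, whereas the inequality displayed in the paper is written with the reverse sign.

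The step that fails is your extension to $a(t)\le 0$, and with it the claim that the statement holds for every $t\in\mathbb{R}$. For $a=-b<0$ you get $g(r)=1$ for $r\le b$ and $g(r)=1-\frac{1}{\pi}\arccos(b/r)$ for $r>b$. Since $\arccos(b/r)$ increases with $r$, $g$ decreases from $1$ to $\frac12$. Geometrically, the half-plane contains the disc of radius $b$, but large circles are only about half inside it.

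So $g$ is nonincreasing, and your own identity $F(m+1)-F(m)=\operatorname{Cov}_m(R^2,g(R))/\mathbb{E}_m[R^2]$ gives $F(m+1)<F(m)$. The inequality is strict because the radial density is positive on $(0,\infty)$ and $g$ is strictly decreasing on $(b,\infty)$. Hence $Q_k(t)$ is strictly \emph{increasing} in $k$ in that regime. For fixed $n$ the lemma for $Q_k$ is therefore false once $t$ is negative enough that $a(t)<0$, so no argument can cover that case.

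The statement should be restricted to $a(t)\ge 0$. The $\widetilde Q_k$ part is fine for all $t$, since $g\equiv 1$ when $\widetilde a(t)\le 0$. The restriction does not affect the paper, which only uses the lemma for $|t|\lesssim(\log n)^{1/4}$ and large $n$, where $a(t)>0$. The paper's conditioning argument also tacitly needs $a(t)>0$, so that $\{Y\cos\Theta\ge a(t)\}$ is empty when $\cos\Theta\le 0$. Drop that paragraph and state the restriction instead.
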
 
\begin{proof} 
Taking account of \eqref{Jk} and \eqref{Jktilde}, we derive 
$$\widetilde{Q}_k(t)=\mathbb{P}(Y_{n-k+1}\ge  \widetilde{a}(t)),$$ where 
$Y_{j}$ is a random variable whose density function is proportional to $$\exp(-nV(r))r^{2j-1}1_{r>0}.$$ 
According to \cite[Lemma 2.4]{JQ19}, we see 
$$\mathbb{P}(Y_{n-k+1}\ge s)\le \mathbb{P}(Y_{n-k}\ge s)$$ for any $s\in\mathbb{R}$ and then $\widetilde{Q}_k(t)$ is decreasing on $k$.  
Meanwhile, 
using the polar coordinates, we understand the terms  $J_k(t)$ as 
$$J_{n-k}(t)=\int_{r\cos\theta>a(t), r>0, \theta\in [-\pi, \pi]} e^{-n V(r)}r^{2n-2k+1} dr d\theta.$$ 
Combining this with \eqref{Jk}, we derive that 
$$Q_k(t)=\mathbb{P}(Y_{n-k+1} \cos\Theta\ge a(t)).$$ Here, $\Theta$ is uniformly distributed on $[-\pi, \pi]$ and $Y_{n-k}$ is independent of $\Theta.$  
By conditioning on $\Theta$ and using the independence of $Y_{n-k}$ and $\Theta,$ we know $Q_k(t)$ is decreasing on $k.$ 	
\end{proof}

\section{Proof of Theorems} 
In this section, we give the proofs of Theorems, which consists of three subsections.
 
\subsection{Proof of Theorem \ref{main}} 
The proof of Theorem \ref{main} replies on the following two propositions, one is on the trace of $\mathbb{K}_n$ limited on $A(t)$ and $\widetilde{A}(t)$ and the other is for their Hilbert-Smith norms.  
\begin{prop}\label{keyprop} Let $\mathbb{K}_n$ and $A(t), \widetilde{A}(t)$ be defined as above. For sufficiently large $n,$ we have 
$${\rm Tr}(\mathbb{K}_n|_{A(t)})=(1+O((\log n)^{-1}))\exp(-t)(1-\frac{4t^2+20t-25\log\log n}{8\gamma_n}) $$ as well as $${\rm Tr}(\mathbb{K}_n|_{\widetilde{A}(t)})=(1+O((\log n)^{-1}))\exp(-t)(1-\frac{t^2+4t-4\log\log n}{2\widetilde{\gamma}_n})$$
uniformly on $|t|\lesssim (\log n)^{1/4}.$
\end{prop}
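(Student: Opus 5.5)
We first write both traces as sums of the ratios introduced in Lemma \ref{decreasing}. Integrating the diagonal of the kernel from Lemma \ref{lemdpp} term by term gives
\[
{\rm Tr}(\mathbb{K}_n|_{A(t)})=\sum_{j=0}^{n-1}\frac{J_j(t)}{J_j}=\sum_{k=1}^{n}Q_k(t),\qquad
{\rm Tr}(\mathbb{K}_n|_{\widetilde A(t)})=\sum_{k=1}^{n}\widetilde Q_k(t).
\]
We then split each sum at $k_n:=\lfloor C\sqrt{n\log n}\rfloor$, with $C$ a large constant. For $1\le k\le k_n$ we insert the asymptotics of Lemmas \ref{lemjk}, \ref{lemjkt} and \ref{lemjktilde}, which hold uniformly for $|t|\lesssim(\log n)^{1/4}$. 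For $k>k_n$ we use the monotonicity in Lemma \ref{decreasing}, which gives
\[
\sum_{k>k_n}Q_k(t)\le nQ_{k_n}(t).
\]
From the explicit form, $Q_{k_n}(t)$ carries the factor $\exp(-2k_n^2/(\alpha n t_0^2))\le n^{-2C^2/(\alpha t_0^2)}$, so this tail is $O(n^{-1})$ once $C$ is large.

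\textbf{The spectral radius.} Dividing Lemma \ref{lemjktilde} by Lemma \ref{lemjk}, the factors $t_0^{2n}e^{-nV(t_0)}$ cancel. Write $\delta:=\widetilde a(t)/t_0-1=\sqrt{\widetilde\gamma_n/(\alpha n)}\,(1+t/\widetilde\gamma_n)/t_0$. The ratio becomes
\[
\widetilde Q_k(t)=\frac{\sqrt{n\alpha}}{\sqrt{2\pi}\sqrt{\alpha n\widetilde\gamma_n}}\cdot
\frac{e^{-\frac{\widetilde\gamma_n}{2}(1+t/\widetilde\gamma_n)^2}\,e^{-2k\log(1+\delta)-\frac{2k^2}{\alpha n t_0^2}}}{1+\frac{t}{\widetilde\gamma_n}+\frac{2k}{t_0\sqrt{\alpha n\widetilde\gamma_n}}}\,\bigl(1+o((\log n)^{-1})\bigr).
\]
Put $s=2k/(t_0\sqrt{\alpha n\widetilde\gamma_n})$. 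Then $2k\log(1+\delta)=\widetilde\gamma_n s(1+t/\widetilde\gamma_n)+O(k\delta^2)$ and $2k^2/(\alpha nt_0^2)=\widetilde\gamma_n s^2/2$. The sum over $k$ therefore becomes a Riemann sum with mesh $ds=2/(t_0\sqrt{\alpha n\widetilde\gamma_n})$. After replacing it by an integral, the $k$-sum reduces to
\[
\frac{t_0\sqrt{\alpha n\widetilde\gamma_n}}{2}\int_0^\infty \frac{e^{-\widetilde\gamma_n s(1+t/\widetilde\gamma_n)-\widetilde\gamma_n s^2/2}}{1+t/\widetilde\gamma_n+s}\,ds .
\]
Since $\widetilde\gamma_n\to\infty$, the integral is dominated by $s\lesssim 1/\widetilde\gamma_n$. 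We expand it by Watson's lemma as
\[
\widetilde\gamma_n^{-1}\bigl(1-\tfrac{t}{\widetilde\gamma_n}-\tfrac{2}{\widetilde\gamma_n}+O(\widetilde\gamma_n^{-2})\bigr),
\]
where the $-2/\widetilde\gamma_n$ collects the $s$ in the denominator and the $-s^2\widetilde\gamma_n/2$ in the exponent. Combining with the prefactor $e^{-\widetilde\gamma_n/2-t-t^2/(2\widetilde\gamma_n)}$ and substituting $e^{-\widetilde\gamma_n/2}$ from the definition of $\widetilde\gamma_n$, the leftover powers of $\log n$ should be exactly $\widetilde\gamma_n\sim\log n$ up to the factor $1-\log\log n\cdot(\cdots)/\widetilde\gamma_n$. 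Concretely, writing $\log n=\widetilde\gamma_n+2\log\log n+O(1)$ and expanding $\sqrt{\log n/\widetilde\gamma_n}$-type ratios produces the $+4\log\log n/(2\widetilde\gamma_n)$ term. We should then recover $e^{-t}\bigl(1-\frac{t^2+4t-4\log\log n}{2\widetilde\gamma_n}\bigr)$. This hinges on collecting the constants ($8\pi$, $\alpha t_0^2$) so that they cancel exactly against the definition of $\widetilde\gamma_n$.

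\textbf{The rightmost eigenvalue.} We proceed identically using Lemma \ref{lemjkt}. Now the denominator is raised to the power $3/2$ and there is an extra factor $(n\alpha\gamma_n)^{-3/4}\sqrt{2\pi a(t)}$. Watson's lemma now gives correction $-\frac{3}{2\gamma_n}$ from the denominator, and the power of $\log n$ left over matches the $(\log n)^5$ inside the definition of $\gamma_n$. Note that $\gamma_n$ contains a factor $1/2$, so $\log n=2\gamma_n+5\log\log n+O(1)$. This produces the coefficients $20t$ and $25\log\log n$ over $8\gamma_n$.

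\textbf{Main obstacle.} The hard part is the error bookkeeping, so that every neglected term is truly $O((\log n)^{-1})$ relative to the main term and not merely $O(\log\log n/\log n)$. The terms to control are:
\begin{itemize}
\item the Riemann-sum-to-integral error, which is relative $O(\widetilde\gamma_n/\sqrt{n})$ because the mesh times the decay scale is tiny;
\item the errors $O(k\delta^2)$ and $O(n^{-1/2}(\log n)^{3/2})$ from the lemmas, which are negligible for $k\lesssim\sqrt{n\log n}$;
\item the second-order terms in expressing $\sqrt{\gamma_n}$ through $\log n$.
\end{itemize}
The $\log\log n/\gamma_n$ terms are kept exactly, while $(\log\log n)^2/\gamma_n^2$ terms are absorbed into $O((\log n)^{-1})$ since they are $o(1/\log n)$ times $\log n/\log n$. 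I would check the uniformity in $|t|\lesssim(\log n)^{1/4}$ at the end: it holds because $t^2/\gamma_n\to0$ and the Watson expansion errors are $O(t^2/\gamma_n^2)=O((\log n)^{-3/2})$.
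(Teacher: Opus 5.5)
Your architecture matches the paper's: the trace written as $\sum_k Q_k(t)$ (resp.\ $\sum_k\widetilde Q_k(t)$), a cut-off at order $\sqrt{n\log n}$, Lemma~\ref{decreasing} to bound the tail by $n$ times the last retained term, and Lemmas~\ref{lemjk}, \ref{lemjktilde}, \ref{lemjkt} on the head. Your cut-off $C\sqrt{n\log n}$ with $C$ large is fine, since the lemmas cover $k\lesssim\sqrt{n\log n}$, and it makes the tail $O(n^{-1})=e^{-t}O((\log n)^{-1})$; the paper uses $j_n=\lfloor\sqrt{\gamma_n\alpha n t_0^2}\rfloor$ instead. The real difference is how you evaluate the head sum. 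The paper never passes to an integral. It pulls $(1+t/\gamma_n)^{-3/2}$ out exactly and sandwiches the remaining $k$-dependent factor between $1$ and $1-\frac{3k}{t_0\sqrt{\alpha n\gamma_n}}-\frac{2k^2}{\alpha n t_0^2}$, using $e^{-x}\ge 1-x$ and $\log(1+x)\le x$. It then sums exact geometric series in $q=(t_0/a(t))^2$. Because $1-q\asymp\sqrt{\gamma_n/n}$, the correction terms are $O(1/\gamma_n)$ and are simply absorbed. That argument is more elementary: it needs only two-sided bounds and no Riemann-sum or Watson-lemma error analysis. Your route (Riemann sum, then expansion in $1/\widetilde\gamma_n$ after $x=\widetilde\gamma_n s$) is also valid. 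The summand is monotone in $k$, so the Riemann error is relatively $O(\sqrt{\widetilde\gamma_n/n})$. Your route yields the exact $1/\gamma_n$ constants ($-\frac{5}{2\gamma_n}$ at the rightmost edge, $-\frac{2}{\widetilde\gamma_n}$ for the spectral radius), which the statement absorbs anyway.

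There is one slip you must fix. With $b=1+t/\widetilde\gamma_n$, your integral equals $\widetilde\gamma_n^{-1}\int_0^\infty e^{-bx-x^2/(2\widetilde\gamma_n)}(b+x/\widetilde\gamma_n)^{-1}\,dx=\widetilde\gamma_n^{-1}b^{-2}\bigl(1+O(\widetilde\gamma_n^{-1})\bigr)$. One factor $b^{-1}$ comes from the denominator and one from $\int_0^\infty e^{-bx}\,dx=b^{-1}$. So the $t$-correction is $-2t/\widetilde\gamma_n$, not $-t/\widetilde\gamma_n$; with your expansion as written you would end at $t^2+2t$ rather than $t^2+4t$. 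In the rightmost case the same count gives $b^{-3/2}\cdot b^{-1}=b^{-5/2}$, hence $20t/(8\gamma_n)$, as you claim. Keep these powers of $b$ exact rather than expanding to $O(\widetilde\gamma_n^{-2})$, since $t/\widetilde\gamma_n$ can be as large as $(\log n)^{-3/4}$.

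Two smaller points on the expansion:
\begin{itemize}
\item The leftover ratio is $\log n/\widetilde\gamma_n$ to the first power, not a square root, and $(\log n/(2\gamma_n))^{5/4}$ for the rightmost eigenvalue. That is what produces $2\log\log n/\widetilde\gamma_n$ and $25\log\log n/(8\gamma_n)$.
\item The constraint that actually binds the range $|t|\lesssim(\log n)^{1/4}$ is the linearization $e^{-t^2/(2\gamma_n)}=1-\frac{t^2}{2\gamma_n}+O(t^4/\gamma_n^2)$. Its error is exactly of order $(\log n)^{-1}$ at the edge of that range, so you should track it explicitly.
\end{itemize}
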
 
\begin{proof}

Recall that  
\[
A(t)=\{z\in\mathbb{C}\mid \Re z\ge a(t)\},\qquad 
a(t)=t_0+\frac{\sqrt{\gamma_n}}{\sqrt{\alpha n}}+\frac{t}{\sqrt{\alpha n \gamma_n}}.
\]  
By definition,  
\[
\operatorname{Tr}(\mathbb{K}_n|_{A(t)})
=\int_{A(t)} K_n(z,z)\,dz
=\sum_{k=0}^{n-1}\frac{J_k(t)}{J_k}
=\sum_{k=1}^{n}Q_k(t).
\]  

We choose  
\(
j_n=\lfloor \sqrt{\gamma_n \alpha n t_0^2}\rfloor,
\) 
which is of order \(\sqrt{n\log n}\). Lemma~\ref{decreasing} gives  
\[
\sum_{k=1}^{j_n}Q_k(t)\le \sum_{k=1}^{n}Q_k(t)
=\sum_{k=1}^{j_n}Q_k(t)+\sum_{k=j_n+1}^{n}Q_k(t)
\le \sum_{k=1}^{j_n}Q_k(t)+nQ_{j_n}(t).
\]  
Thus it suffices to prove the two estimates  
\begin{equation}\label{trlim}
\sum_{k=1}^{j_n}Q_k(t)
=(1+O((\log n)^{-1}))\,e^{-t}
(1-\frac{t^2}{\gamma_n}-\frac{5t}{2\gamma_n}+\frac{25}{8\gamma_n}\log\log n)
\end{equation}  
and  
\begin{equation}\label{trremaindr}
\sum_{k=j_n+1}^{n}Q_k(t)\ll e^{-t}(\log n)^{-1}.
\end{equation}  

From Lemmas~\ref{lemjk} and~\ref{lemjkt}, we obtain the uniform asymptotic expression for \(1\le k\le j_n\):
\begin{equation}\label{summand}
Q_k(t)=\frac{
\exp(-\frac{\gamma_n}{2}(1+\frac{t}{\gamma_n})^2-\frac{2k^2}{\alpha n t_0^2})
}{
2\pi\sqrt{t_0}\,(n\alpha)^{1/4}\gamma_n^{3/4}
(1+\frac{t}{\gamma_n}+\frac{2k}{t_0\sqrt{\alpha n \gamma_n}})^{3/2}
}
(\frac{t_0}{a(t)})^{2k}.
\end{equation}  

We now concentrate on the sum of the factor  
\[
r_k(t):=
(1+\frac{t}{\gamma_n}+\frac{2k}{t_0\sqrt{\alpha n \gamma_n}})^{-3/2}
\exp(-\frac{2k^2}{\alpha n t_0^2})
(\frac{t_0}{a(t)})^{2k}.
\]  
To estimate this factor, observe that  
\[
\exp(-\frac{2k^2}{\alpha n t_0^2})
(1+\frac{\frac{2k}{t_0\sqrt{\alpha n \gamma_n}}}{1+\frac{t}{\gamma_n}})^{-3/2}
\le 1,
\]  
since both factors are at most \(1\). For a lower bound, using the elementary inequalities  
\[
e^{-x}\ge 1-x\quad (x\in \mathbb{R}),\qquad \log(1+x)\le x\quad (x>-1),
\]  
and the fact that \(t/\gamma_n=o(1)\) (which implies \(1+t/\gamma_n\sim 1\)), we get  
\[
\begin{aligned}
(1+\frac{\frac{2k}{t_0\sqrt{\alpha n \gamma_n}}}{1+\frac{t}{\gamma_n}})^{-3/2}
\exp(-\frac{2k^2}{\alpha n t_0^2}) 
&=\exp(
-\frac{3}{2}\log(1+\frac{\frac{2k}{t_0\sqrt{\alpha n \gamma_n}}}{1+\frac{t}{\gamma_n}})
-\frac{2k^2}{\alpha n t_0^2}
) \\
&\ge 1-\frac{3k}{t_0\sqrt{\alpha n \gamma_n}}-\frac{2k^2}{\alpha n t_0^2}.
\end{aligned}
\]    

Consequently, multiplying by \((t_0/a(t))^{2k}\) and recalling the prefactor \((1+t/\gamma_n)^{-3/2}\) (which comes from rewriting the denominator), we arrive at the sandwich bound  
\begin{equation}\label{twoside}
\begin{aligned}
(\frac{t_0}{a(t)})^{2k}
(1+\frac{t}{\gamma_n})^{-3/2}
(
1-\frac{3k}{t_0\sqrt{\alpha n \gamma_n}}
-\frac{2k^2}{\alpha n t_0^2}
) \le r_k(t) \le
(\frac{t_0}{a(t)})^{2k}
(1+\frac{t}{\gamma_n})^{-3/2}.
\end{aligned}
\end{equation}  

We now sum \eqref{twoside} over \(1\le k\le j_n\). Set  
\[
q:=(\frac{t_0}{a(t)})^2 \in (0,1).
\]  
Using the standard identities  
\[
\sum_{k=1}^{m}q^k=\frac{q(1-q^m)}{1-q},\qquad
\sum_{k=1}^{m} k^{\ell} q^k \sim \frac{q}{(1-q)^{\ell+1}}\quad (m\gg 1,\ \ell=1,2),
\]  
we obtain from the left-hand side of \eqref{twoside}
\[
\sum_{k=1}^{j_n} r_k(t)
\ge
(1+\frac{t}{\gamma_n})^{-3/2}
\frac{q}{1-q}
(
1-q^{j_n}
-\frac{3}{t_0\sqrt{\alpha n \gamma_n}\,(1-q)}
-\frac{2}{\alpha n t_0^2\,(1-q)^2}
),
\]  
and from the right-hand side  
\[
\sum_{k=1}^{j_n} r_k(t)
\le
(1+\frac{t}{\gamma_n})^{-3/2}
\frac{q}{1-q}
(1-q^{j_n}).
\]  

Now, using the definition of \(a(t)\) and the facts \(t=o(\gamma_n)\) and \(j_n=\lfloor \sqrt{\gamma_n\alpha n t_0^2}\rfloor\), we have  
\begin{equation}\label{1-q}
1-q=1-(\frac{t_0}{a(t)})^2
=\frac{(a(t)+t_0)(a(t)-t_0)}{a^2(t)}
=\frac{2\sqrt{\gamma_n}}{t_0\sqrt{\alpha n}}
(1+\frac{t}{\gamma_n})(1+o((\log n)^{-1})),
\end{equation}  
and  
\[
q^{j_n}=(\frac{t_0}{a(t)})^{2j_n}
=\exp(-t_0\sqrt{\alpha n\gamma_n}\,
\log(1+\frac{a(t)-t_0}{t_0}))
\sim \exp(-2\gamma_n-2t)
\ll \frac{1}{\log n}.
\]  
Thus \(1-q^{j_n}=1+o((\log n)^{-1})\), and because \(1-q \asymp \gamma_n^{1/2}/\sqrt{n}\), both correction terms  
\[
\frac{3}{t_0\sqrt{\alpha n \gamma_n}\,(1-q)}
\quad\text{and}\quad
\frac{2}{\alpha n t_0^2\,(1-q)^2}
\]  
are of order \(O(\gamma_n^{-1})=O((\log n)^{-1})\). Hence  
\[
1-q^{j_n}
-\frac{3}{t_0\sqrt{\alpha n \gamma_n}\,(1-q)}
-\frac{2}{\alpha n t_0^2\,(1-q)^2}
=1+O((\log n)^{-1}).
\]  
Combining this with \eqref{1-q}, we get  
\begin{equation}\label{sumofr}
\sum_{k=1}^{j_n} r_k(t)
=
(1+\frac{t}{\gamma_n})^{-3/2}
\frac{q}{1-q}
(1+O((\log n)^{-1})).
\end{equation}  

Since \(q=(\frac{t_0}{a(t)})^2=1+o((\log n)^{-1})\), inserting \eqref{1-q} into \eqref{sumofr} yields  
\[
\sum_{k=1}^{j_n} r_k(t)
=
(1+O((\log n)^{-1}))
(1+\frac{t}{\gamma_n})^{-5/2}
\frac{t_0\sqrt{n\alpha}}{2\sqrt{\gamma_n}}.
\]  

Multiplying this by the prefactor in \eqref{summand} (namely \(1/(2\pi\sqrt{t_0}(n\alpha)^{1/4}\gamma_n^{3/4})\)) and by the exponential term \(\exp(-\frac{\gamma_n}{2}(1+t/\gamma_n)^2)\), we obtain  
\begin{equation}\label{suminitial}
\sum_{k=1}^{j_n}Q_k(t)
=
(1+O((\log n)^{-1}))
\frac{\exp(-t-\frac{\gamma_n}{2}-\frac{t^2}{2\gamma_n})(\alpha n t_0^2)^{1/4}}{4\pi\,\gamma_n^{5/4}}
(1+\frac{t}{\gamma_n})^{-5/2}.
\end{equation}  

Finally, using the definition  
\[
\gamma_n=\frac12\log(\frac{n\alpha t_0^2}{8\pi^4(\log n)^5}),
\]  
we have  
\begin{equation}\label{expgamma}
\exp(-\frac{\gamma_n}{2})
=
\frac{2^{3/4}\pi(\log n)^{5/4}}{(n\alpha t_0^2)^{1/4}}.
\end{equation}  
Substituting \eqref{expgamma} into \eqref{suminitial} gives, uniformly for \(|t|\lesssim (\log n)^{1/4}\),  
\[
\begin{aligned}
\sum_{k=1}^{j_n}Q_k(t)
&=
(1+O((\log n)^{-1}))
\exp(-t-\frac{t^2}{2\gamma_n})
(1+\frac{t}{\gamma_n})^{-5/2}
(\frac{\log n}{2\gamma_n})^{5/4} \\
&=
(1+O((\log n)^{-1}))
e^{-t}
(
1-\frac{t^2}{2\gamma_n}-\frac{5t}{2\gamma_n}
+\frac{5}{8\gamma_n}\log(\frac{8\pi^4(\log n)^5}{\alpha t_0^2})
) \\
&=
(1+O((\log n)^{-1}))
e^{-t}
(
1-\frac{t^2}{2\gamma_n}-\frac{5t}{2\gamma_n}
+\frac{25}{8\gamma_n}\log\log n
),
\end{aligned}
\]  
where in the last step the constant term \(\frac{5}{8\gamma_n}\log(8\pi^4/(\alpha t_0^2))\) has been absorbed into the \(O((\log n)^{-1})\) error.

 Next, we prove \eqref{trremaindr}. Lemma \ref{decreasing} implies that 
 $$\sum_{j=j_n+1}^n Q_k(t)\le n Q_{j_n}(t). $$
 The asymptotic \eqref{summand} and \eqref{expgamma} gives 
  \begin{equation}\label{nQjn}\aligned n Q_{j_n}(t)&\asymp e^{-3t}\sqrt{n\log n} \exp(-4\gamma_n)\ll e^{-t}(\log n)^{-1}. 
 \endaligned \end{equation}
 Now we work on ${\rm Tr}(\mathbb{K}_n|_{\widetilde{A}(t)}).$ 
 Similarly, choosing $i_n=\lfloor\sqrt{\alpha n\widetilde{\gamma}_n t_0^2}\rfloor,$ we have 
 $${\rm Tr}(\mathbb{K}_n|_{\widetilde{A}(t)})=\sum_{k=1}^{n}\widetilde{Q}_k(t)=\sum_{k=1}^{i_n}\widetilde{Q}_k(t)+\sum_{k=i_n+1}^n \widetilde{Q}_k(t)$$ 
 and then 
 \begin{equation}\label{trtilde}
 	\sum_{k=1}^{i_n}\widetilde{Q}_k(t)\le {\rm Tr}(\mathbb{K}_n|_{\widetilde{A}(t)})=\sum_{k=1}^{n}\widetilde{Q}_k(t)\le \sum_{k=1}^{i_n}\widetilde{Q}_k(t)+n\widetilde{Q}_{i_n}(t).
 \end{equation}
 Lemmas \ref{lemjk} and \ref{lemjktilde} yield 
\begin{equation}\label{quotientilde}  
\widetilde{Q}_k(t)=\frac{(1+o((\log n)^{-1}))}{\sqrt{2\pi\widetilde{\gamma}_n}(1+\frac{t}{\widetilde{\gamma}_n}+\frac{2k}{t_0\sqrt{\alpha n \widetilde{\gamma}_n}})}\exp(-\frac{\widetilde{\gamma}_n}{2} (1+\frac{t }{\widetilde{\gamma}_n})^2-\frac{2k^2}{\alpha n t_0^2})(\frac{t_0}{\widetilde{a}(t)})^{2k}
\end{equation}
uniformly on $1\le k\le i_n.$ 
In analogy with the treatment of \eqref{suminitial}, the factor $$(1+\frac{t}{\widetilde{\gamma}_n}+\frac{2k}{t_0\sqrt{\alpha n \widetilde{\gamma}_n}})^{-1}\exp(-\frac{2k^2}{\alpha n t_0^2})$$ in the summand can be reduced to \((1+\frac{t}{\widetilde{\gamma}_n})^{-1}\), since the error incurred by this replacement is uniformly absorbed into the \(O((\log n)^{-1})\) term. Therefore,
\begin{equation}\label{suminitialtilde}
\begin{aligned} 
\sum_{k=1}^{i_n}\widetilde{Q}_k(t)&=(1+O((\log n)^{-1}))\frac{ \exp(-t-\frac{\widetilde{\gamma}_n}{2}-\frac{t^2}{2\widetilde{\gamma}_n}) }{\sqrt{2\pi\widetilde{\gamma}_n}}(1+\frac{t}{\widetilde{\gamma}_n})^{-1}\sum_{k=1}^{i_n}(\frac{t_0}{\widetilde{a}(t)})^{2k} \\
&=(1+O((\log n)^{-1}))\frac{ \sqrt{t_0^2\alpha n}\exp(-t-\frac{\widetilde{\gamma}_n}{2}-\frac{t^2}{2\widetilde{\gamma}_n}) }{2\sqrt{2\pi}\widetilde{\gamma}_n}(1+\frac{t}{\widetilde{\gamma}_n})^{-2}.
\end{aligned}
\end{equation}
Here, we use the fact $$(\frac{t_0}{\widetilde{a}(t)})^{2i_n}\asymp\exp(-2\widetilde{\gamma}_n-2t)\ll n^{-1}.$$ 
Now $\widetilde{\gamma}_n=\log\frac{n\alpha t_0^2}{8\pi (\log n)^2},$ whence    
\begin{equation}\label{expgammatilde}\exp(-\frac{\widetilde{\gamma}_n}{2})=\frac{2\sqrt{2\pi}\log n}{\sqrt{n\alpha t_0^2}}.\end{equation}
Plugging \eqref{expgammatilde} into \eqref{suminitialtilde}, we derive 
\begin{equation}\label{partialsumtilde}\aligned\sum_{k=1}^{i_n}\widetilde{Q}_k(t)&=(1+O((\log n)^{-1}))\exp(-t-\frac{t^2}{2\widetilde{\gamma}_n})(1+\frac{t}{\widetilde{\gamma}_n})^{-2}\frac{\log n}{\widetilde{\gamma}_n}\\
&=(1+O((\log n)^{-1}))\exp(-t)(1-\frac{t^2}{2\widetilde{\gamma}_n}-\frac{2t}{\widetilde{\gamma}_n}+\frac{2\log\log n}{\widetilde{\gamma}_n})\endaligned \end{equation}
uniformly on $|t|\lesssim (\log n)^{1/4}.$ The proof is completed now. 
Similarly as for $nQ_{j_n}(t),$ we have 
\begin{equation}\label{nQin}n\widetilde{Q}_{i_n}(t)\asymp e^{-3t}\sqrt{n\log n} \exp(-4\widetilde{\gamma}_n)\ll e^{-t}(\log n)^{-1}.\end{equation}
This together with \eqref{partialsumtilde} completes the proof. 
 \end{proof}  
 
\begin{prop}\label{HS}
	With $\mathbb{K}_n$, $A(t)$ and $\widetilde{A}(t)$ defined as above, we have
	\[
	\|\mathbb{K}_n|_{A(t)}\|_2 \lesssim e^{-t}(\log n)^{1/4}n^{-\frac{1}{20}}
	\quad\text{and}\quad
	\|\mathbb{K}_n|_{\widetilde{A}(t)}\|_2 \lesssim \frac{e^{-t}\sqrt{\log n}}{n^{1/4}}
	\]
	uniformly for $|t|\lesssim (\log n)^{1/4}.$
\end{prop}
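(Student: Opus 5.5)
The two bounds have different difficulty. For $\widetilde A(t)$, rotation invariance diagonalises the problem and the bound follows from Proposition \ref{keyprop}. For the half-plane $A(t)$ there is no such orthogonality, and we must instead use that the kernel decorrelates in angle on a scale much smaller than the angular width of the region where $A(t)$ carries mass.

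\emph{Spectral radius.} Since $\widetilde A(t)$ is rotation invariant, we expand
\[
\|\mathbb K_n|_{\widetilde A(t)}\|_2^2=\int_{\widetilde A(t)}\int_{\widetilde A(t)}|K_n(z,w)|^2\,dz\,dw=\sum_{j,k}\frac{1}{J_jJ_k}\Bigl|\int_{\widetilde A(t)}e^{-nV(|z|)}z^j\bar z^{k}\,dz\Bigr|^2 .
\]
Integrating in $\theta$ first kills every term with $j\ne k$. Hence
\[
\|\mathbb K_n|_{\widetilde A(t)}\|_2^2=\sum_{k=1}^n\widetilde Q_k(t)^2\le \widetilde Q_1(t)\,{\rm Tr}(\mathbb K_n|_{\widetilde A(t)}),
\]
where the inequality uses Lemma \ref{decreasing}. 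Proposition \ref{keyprop} gives ${\rm Tr}\lesssim e^{-t}$. By \eqref{quotientilde} and \eqref{expgammatilde},
\[
\widetilde Q_1(t)\lesssim e^{-t}e^{-\widetilde\gamma_n/2}/\sqrt{\widetilde\gamma_n}\asymp e^{-t}\sqrt{\log n}/\sqrt n .
\]
This gives $\|\mathbb K_n|_{\widetilde A(t)}\|_2\lesssim e^{-t}(\log n)^{1/4}n^{-1/4}$, which is stronger than the claimed bound.

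\emph{Rightmost eigenvalue: truncation.} First, following the Laplace analysis in Lemma \ref{lemjkt}, the mass of $K_n(z,z)$ on $A(t)$ sits essentially in
\[
A'=A(t)\cap\{|z|\le t_0+n^{-1/2+\delta},\ |\Im z|\le n^{-1/4+\delta}\}.
\]
The $y$-width of that analysis is $(n\varphi_u'(a(t)))^{-1/2}\asymp n^{-1/4}(\log n)^{-1/4}$, so $\int_{A(t)\setminus A'}K_n(z,z)\,dz$ is superpolynomially small.

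We then use $|K_n(z,w)|^2\le K_n(z,z)K_n(w,w)$. Any pair with $z$ or $w$ in $A(t)\setminus A'$ therefore contributes at most $2\,{\rm Tr}(\mathbb K_n|_{A(t)})\int_{A(t)\setminus A'}K_n(z,z)\,dz$, which is negligible. It remains to bound the integral over $A'\times A'$. We split it according to $\theta:=\arg z-\arg w$, using a threshold $\eta=n^{-1/2+\delta'}$.

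\emph{Near pairs ($|\theta|\le\eta$).} We bound $|K_n|^2$ by the product of diagonals. By Lemma \ref{lemjkt}, $K_n(z,z)$ on $A'$ is spread over angles of width $\asymp n^{-1/4}(\log n)^{-1/4}$, with a density bounded by a constant multiple of its average. So the near-pair region carries a fraction $\lesssim \eta\, n^{1/4}(\log n)^{1/4}$ of ${\rm Tr}^2$. This gives a contribution $\lesssim e^{-2t}n^{-1/4+\delta'}(\log n)^{1/4}$.

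\emph{Far pairs ($|\theta|>\eta$).} Write
\[
K_n(z,w)=e^{-\frac n2V(|z|)-\frac n2V(|w|)}\sum_k c_k e^{ik\theta},\qquad c_k=\frac{(|z||w|)^k}{J_k}.
\]
The ratio $J_{k+1}/J_k$ is increasing in $k$ (log-convexity of moments, which is the same monotonicity input as in \cite[Lemma 2.4]{JQ19}). Hence $c_k$ is unimodal in $k$, and Abel summation gives
\[
\Bigl|\sum_k c_k e^{ik\theta}\Bigr|\le \frac{4\max_k c_k}{|1-e^{i\theta}|}.
\]
By Lemmas \ref{lemjk}–\ref{lemjkt}, the effective number of indices is $\sum_k c_k/\max_k c_k\asymp (1-q)^{-1}\asymp\sqrt{n/\log n}$. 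This yields the pointwise bound
\[
|K_n(z,w)|^2\lesssim \frac{\log n}{n\theta^2}\,K_n(z,z)K_n(w,w).
\]
Integrating over $|\theta|>\eta$ with the same angular density bound gives $\lesssim e^{-2t}\,n^{-1/4-\delta'}\,\mathrm{polylog}(n)$.

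\emph{Conclusion and main obstacle.} Adding the pieces gives $\|\mathbb K_n|_{A(t)}\|_2^2\lesssim e^{-2t}n^{-1/10}(\log n)^{1/2}$ for a suitable choice of $\delta,\delta'$; the target $n^{-1/20}$ leaves ample room. The step I expect to be hardest is the far-pair estimate. It needs the unimodality of $c_k$ uniformly in $|z|,|w|\in[a(t),t_0+n^{-1/2+\delta}]$. It also needs a uniform comparison of $\max_k c_k$ with $K_n(z,z)^{1/2}K_n(w,w)^{1/2}$, because $|z|\ne|w|$ shifts the peak index. I would handle both by redoing the Laplace expansion of Lemma \ref{lemjk} with $\sqrt{|z||w|}$ in place of $a(t)$.
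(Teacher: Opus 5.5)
Your argument is correct. For the spectral radius it matches the paper; for the half-plane it takes a genuinely different route.

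\textbf{Spectral radius.} For $\widetilde A(t)$ you use the same diagonalisation as the paper. Bounding $\sum_k\widetilde Q_k^2(t)\le \widetilde Q_1(t)\operatorname{Tr}(\mathbb K_n|_{\widetilde A(t)})$ via Lemma \ref{decreasing} is a cleaner way to sum the squares, and it gives the slightly sharper bound $e^{-t}(\log n)^{1/4}n^{-1/4}$.

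\textbf{Half-plane: how the two routes differ.} For $A(t)$ the paper stays in the monomial basis.
\begin{itemize}
\item It writes $\|\mathbb K_n|_{A(t)}\|_2^2=\sum_{j,k}M_{j,k}^2(t)$ and handles indices beyond $j_n$ by $M_{j,k}^2\le Q_jQ_k$ and monotonicity.
\item For $|k-j|\le s_n=n^{2/5}$ it uses the same Cauchy--Schwarz bound plus the geometric decay of $Q_k$.
\item For $|k-j|>s_n$ it integrates the angle exactly over $\{r\cos\theta>a(t)\}$. This gives $2\sin((k-j)\arccos(a/r))/(k-j)$, hence $1/|k-j|$ decay.
\end{itemize}
You work in physical space instead. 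You truncate radially and split pairs by angular separation. Near pairs are handled by $|K_n(z,w)|^2\le K_n(z,z)K_n(w,w)$ plus angular concentration. Far pairs get $1/|\theta|$ decay from Abel summation and log-convexity of $J_k$.

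The two decompositions are Fourier-dual views of one mechanism. The kernel decorrelates on the angular scale $\sqrt{\log n/n}$, while the mass is spread over angles of order $(n\log n)^{-1/4}$.

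\textbf{What each route buys.} The paper's route uses only the $J_k,Q_k$ asymptotics already in hand plus one explicit integral, but it relies on the exact half-plane geometry. Yours needs two extra inputs:
\begin{itemize}
\item the radial tail of $K_n(z,z)$, which is the spectral-radius moderate-deviation trace;
\item the bound $m(\phi)\le m(0)\lesssim (n\log n)^{1/4}\operatorname{Tr}(\mathbb K_n|_{A(t)})$ on the angular marginal $m$. This follows because the endpoint Laplace formulas in the proofs of Lemmas \ref{lemjktilde} and \ref{lemjkt}, taken at the same radius $a(t)$, differ per $k$ by the factor $\sqrt{2\pi/a(t)}\,(n\varphi_u'(a(t)))^{1/2}$.
\end{itemize}
In exchange, your argument is robust to the shape of the set. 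Optimising $\eta\approx\sqrt{\log n/n}$ improves the bound to $e^{-t}n^{-1/8}$ up to logarithms. The paper's scheme would reach the same order with $s_n\asymp(n\log n)^{1/4}$.

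\textbf{Two small corrections.}
\begin{itemize}
\item The obstacle you flag is not real. With $c_k(z,w)=(|z||w|)^k/J_k$ one has, termwise, $c_k(z,w)^2=c_k(z,z)\,c_k(w,w)$. Hence $\max_k c_k(z,w)\le(\max_k c_k(z,z)\,\max_k c_k(w,w))^{1/2}$, and unimodality holds for every $z,w$ by log-convexity. No shifted Laplace analysis is needed.
\item At $|z|=t_0+\rho$ the effective number of indices is $\asymp 1/\rho$. This equals $\sqrt{n/\log n}$ only when $\rho\asymp\sqrt{\log n/n}$; on your $A'$ it can drop to $n^{1/2-\delta}$. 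So the pointwise bound is really $|K_n(z,w)|^2\lesssim \rho_z\rho_w\theta^{-2}K_n(z,z)K_n(w,w)$. This costs at most $n^{2\delta}$ in the far-pair term, which is harmless for small $\delta$. Alternatively, truncate at $t_0+C\sqrt{\log n/n}$.
\end{itemize}
Also, the cut $|\Im z|\le n^{-1/4+\delta}$ in $A'$ is automatic from the radial cut, since $(\Im z)^2\le |z|^2-a(t)^2$.
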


\begin{proof}
	
By definition,
\[
\|\mathbb{K}_n|_{A(t)}\|_2^2=\int_{A(t)\times A(t)}|K_n(z,w)|^2\,dz\,dw.
\]
For simplicity, set
\[
M_{j,k}(t)=\frac{1}{\sqrt{J_{n-j}J_{n-k}}}
\int_{A(t)} \exp(-nV(|z|))\,z^{n-k}\bar{z}^{n-j}\,dz.
\]
Clearly, \(M_{k,k}(t)=Q_k(t)\). A straightforward expansion gives
\[
\|\mathbb{K}_n|_{A(t)}\|_2^2=\sum_{j,k=1}^n M_{j,k}^2(t).
\]
By the Cauchy--Schwarz inequality, these quantities satisfy
\begin{equation}\label{CS}
M_{j,k}^2(t)\le M_{j,j}(t)M_{k,k}(t)=Q_k(t)Q_j(t).
\end{equation}
We split the double sum into three regions:
\(1\le j,k\le j_n\);
\(1\le k\le j_n<j\le n\);
and \(j_n+1\le j,k\le n\).
From \eqref{CS} and \eqref{nQjn}, we obtain
\begin{equation}\label{HS1}
\sum_{j_n+1\le j,k\le n} M_{j,k}^2(t)
\le (\sum_{j=j_n+1}^n Q_j(t))^2 \lesssim e^{-6t-8\gamma_n},
\end{equation}
and similarly,
\begin{equation}\label{HS2}
\sum_{1\le k\le j_n < j\le n} M_{j,k}^2(t)
\le \sum_{j=j_n+1}^n Q_j(t)\sum_{k=1}^{n} Q_k(t)
\lesssim e^{-4t-4\gamma_n}.
\end{equation}

It remains to estimate the most involved part, \(\sum_{1\le j,k\le j_n}M_{j,k}^2(t)\). By symmetry,
\[
\sum_{1\le j,k\le j_n}M_{j,k}^2(t)
\lesssim \sum_{1\le j\le k\le j_n}M_{j,k}^2(t)
\le \sum_{j=1}^{j_n}\sum_{d=0}^{j_n-1}M_{j,j+d}^2(t).
\]
Set \(s_n=[n^{2/5}]\). Using \eqref{CS} and the monotonicity of \(Q_j\),
\[
\sum_{j=1}^{j_n}\sum_{d=0}^{s_n}M_{j,j+d}^2(t)
\le \sum_{j=1}^{j_n}\sum_{d=0}^{s_n}Q_j(t)Q_{j+d}(t)
\le \sum_{j=1}^{n}Q_j(t)\sum_{d=0}^{s_n}Q_{1+d}(t).
\]
Indeed, using the asymptotics \eqref{summand} and \eqref{expgamma}, and proceeding as in the derivation of \eqref{suminitial}, we get
\[
\sum_{d=0}^{s_n}Q_{1+d}(t)
\asymp e^{-t}\sqrt{\log n}\,n^{-1/2}
\sum_{k=1}^{s_n}(\frac{t_0}{a(t)})^{2k}.
\]
For this geometric sum,
\[
\sum_{k=1}^{s_n}(\frac{t_0}{a(t)})^{2k}
=
\frac{t_0^2(1-(\frac{t_0}{a(t)})^{2s_n})}{a^2(t)-t_0^2}
\lesssim
\sqrt{\frac{n}{\log n}}
(
1-(\frac{t_0}{\widetilde{a}(t)})^{2s_n}
).
\]
Now \(s_n=n^{2/5}+O(1)\), and it holds that \(s_n(\widetilde{a}(t)-t_0)\sim n^{-1/10}\sqrt{\log n}=o(1)\), whence
\[
1-(\frac{t_0}{\widetilde{a}(t)})^{2s_n}\sim 2s_n \frac{\widetilde{a}(t)-t_0}{t_0}
\asymp \frac{\sqrt{\log n}}{n^{1/10}}.
\]
Consequently,
\begin{equation}\label{HS3}
\sum_{j=1}^{j_n}\sum_{d=0}^{s_n}M_{j,j+d}^2(t)
\lesssim e^{-2t}\frac{\sqrt{\log n}}{n^{1/10}}.
\end{equation}

Recall that for \(k\neq j\),
\[
\int_{\{r\cos\theta>a,\ \theta\in[-\pi,\pi]\}}
\cos((k-j)\theta)\,d\theta
=
\frac{2\sin((k-j)\arccos(a/r))}{k-j}.
\]
Using polar coordinates, we obtain
\[
\begin{aligned}
M_{j,k}(t)&=
\frac{2}{(k-j)\sqrt{J_{n-j}J_{n-k}}}
\int_{a(t)}^{\infty} e^{-nV(r)} r^{2n-k-j+1}
\sin((k-j)\arccos(\frac{a(t)}{r}))\,dr\\
&\lesssim  \frac{1}{(k-j)\sqrt{J_{n-j}J_{n-k}}}\int_{a(t)}^{\infty} e^{-nV(r)} r^{2n-k-j+1}\,dr.
\end{aligned}
\]
A calculation analogous to that in Lemma \ref{lemjkt} yields
$$\int_{a(t)}^{\infty} e^{-nV(r)} r^{2n-k-j+1}\,dr\asymp \frac{t_0^{2n}\exp\{-nV(t_0)-\frac{\gamma_n}{2}(1-\frac{t}{\gamma_n})^2\}}{\sqrt{\alpha n\gamma_n}(a(t))^{k+j-1}} .$$
Then Lemma \ref{lemjk} gives 
$$\sqrt{J_{n-j}J_{n-k}}\asymp \frac{t_0^{2n-(j+k)+1}}{\sqrt{n}}\exp\{nV(t_0)+\frac{j^2+k^2}{\alpha nt_0^2}\} $$
Combining this with \ref{expgamma}, we obtain the upper bound estimate 
\[
M_{j,k}(t)\lesssim
\frac{(\log n)^{3/4}\, e^{-t}}{(k-j) n^{1/4}}
(\frac{t_0}{a(t)})^{k+j},
\]
and hence
\[
M_{j,j+d}^2(t)\le \frac{e^{-2t}(\log n)^{3/2}}{\sqrt{n} d^2} (\frac{t_0}{a(t)})^{4j}.
\]
Therefore,
\begin{equation}\label{HS4}
\begin{aligned}
\sum_{j=1}^{j_n}\sum_{d=s_n+1}^{j_n} M_{j,j+d}^2(t)
&\lesssim
\frac{e^{-2t}(\log n)^{3/2}}{\sqrt{n}}
\sum_{j=1}^{+\infty}
(\frac{t_0}{a(t)})^{4j}\sum_{d=s_n+1}^{+\infty} d^{-2}\\
&\asymp\frac{e^{-2t}(\log n)^{3/2}}{s_n \sqrt{n}}\times \frac{\sqrt{n}}{\sqrt{\log n}}\asymp e^{-2t}n^{-2/5}\log n.
\end{aligned}
\end{equation}
Here we used the sum
\[
\sum_{j=1}^{+\infty}
(\frac{t_0}{a(t)})^{4j}
\asymp
(a(t)-t_0)^{-1}
\asymp
\sqrt{n} (\log n)^{-1/2}.
\]
Combining \eqref{HS1}--\eqref{HS4}, we obtain
\[
\|\mathbb{K}_n|_{A(t)}\|_2\lesssim e^{-t}(\log n)^{1/4}n^{-1/20}.
\]

For the spectral radius case, we define analogously
\[
\widetilde{M}_{j,k}(t)=
\frac{1}{\sqrt{J_{n-j}J_{n-k}}}
\int_{\widetilde{A}(t)} e^{-nV(|z|)} z^{n-k}\bar{z}^{n-j}\,dz.
\]
Writing \(z=re^{i\theta}\), for \(j\neq k\) we have
\[
\widetilde{M}_{j,k}(t)=
\frac{1}{\sqrt{J_{n-j}J_{n-k}}}
\int_{\widetilde{a}(t)}^{\infty} e^{-nV(r)}r^{2n-k-j+1}\,dr
\int_0^{2\pi} e^{i(j-k)\theta}\,d\theta=0.
\]
Consequently,
\[
\|\mathbb{K}_n|_{\widetilde{A}(t)}\|_2^2
=
\sum_{j=1}^n \widetilde{Q}_j^2(t)
\le
\sum_{j=1}^{i_n}\widetilde{Q}_j^2(t)+n\widetilde{Q}_{i_n}^2(t).
\]
Using \eqref{suminitialtilde}, we get
\[
\sum_{j=1}^{i_n}\widetilde{Q}_j^2(t)
\lesssim
\frac{e^{-2t-\widetilde{\gamma}_n}}{\widetilde{\gamma}_n}
\sum_{j=1}^{i_n}(\frac{t_0}{\widetilde{a}(t)})^{4j}
\asymp 
\frac{e^{-2t} \log n}{\sqrt{n}}.
\]
Also, from \eqref{nQin} we have
\[
n\widetilde{Q}_{i_n}^2(t)
\asymp e^{-6t}\log n\, e^{-8\widetilde{\gamma}_n}
\ll \frac{e^{-2t} \log n}{\sqrt{n}}.
\]
Therefore,
\[
\|\mathbb{K}_n|_{\widetilde{A}(t)}\|_2 \lesssim \frac{e^{-t}\sqrt{\log n}}{n^{1/4}}.
\]
\end{proof}

{\bf Proof of Theorem \ref{main}}. 
We first obtain from the triangle inequality and the hole probability formula that 
$$\aligned|\mathbb{P}(X_n\le t)-e^{-e^{-t}}|&\le |\exp(-{\rm Tr} (\mathbb{K}_n|_{A(t)}))-e^{-e^{-t}}|\\
&+|\det({\rm I}-\mathbb{K}_n|_{A(t)})-\exp(-{\rm Tr} (\mathbb{K}_n|_{A(t)}))|. 
\endaligned $$
and 
$$\aligned|\mathbb{P}(X_n\le t)-e^{-e^{-t}}|&\ge |\exp(-{\rm Tr} (\mathbb{K}_n|_{A(t)}))-e^{-e^{-t}}|\\
&\quad-|\det({\rm I}-\mathbb{K}_n|_{A(t)})-\exp(-{\rm Tr} (\mathbb{K}_n|_{A(t)}))| \endaligned $$
The inequality \eqref{comp} and Proposition \ref{HS} yield 
\begin{equation}\label{ignore}\aligned
|\det({\rm I}-\mathbb{K}_n|_{A(t)})-\exp(-{\rm Tr} (\mathbb{K}_n|_{A(t)}))|	&\lesssim e^{-t}(\log n)^{1/4}n^{-\frac{1}{20}}\exp(-{\rm Tr} (\mathbb{K}_n|_{A(t)}))\\
&\lesssim e^{-t-e^{-t}}(\log n)^{1/4}n^{-\frac{1}{20}}.
\endaligned 
\end{equation}
The Proposition \ref{keyprop} tells us 
$$e^{-t}-{\rm Tr} (\mathbb{K}_n|_{A(t)})=e^{-t}\frac{8t^2+20t-25\log\log n+O(1)}{8\gamma_n},$$ 
which is uniformly $o(1)$ for $-\frac{1}{4}\log \log n\le t\le (\log n)^{1/4}.$ Thereby, 
\begin{equation}\label{keyintegre}\aligned |e^{-{\rm Tr} (\mathbb{K}_n|_{A(t)})}-e^{-e^{-t}}|&=e^{-e^{-t}}|\exp(e^{-t}-{\rm Tr} (\mathbb{K}_n|_{A(t)}))-1|\\
&=e^{-e^{-t}-t} \frac{|8t^2+20t-25\log\log n+O(1)|}{8\gamma_n}.
\endaligned \end{equation}
The expressions \eqref{ignore} and \eqref{keyintegre} guarantee that 
\begin{equation}\label{maincontri}|\mathbb{P}(X_n\le t)-e^{-e^{-t}}|= e^{-e^{-t}-t} \frac{1+o(1)}{8\gamma_n}|8t^2+20t-25\log\log n+O(1)|
	\end{equation}
uniformly on $[-\ell_1(n), \ell_2(n)].$

The functions $e^{-e^{-t}-t} t^{k}$ is bounded for $k=0, 1, 2$ ensures that 
$$\aligned \sup_{-\ell_1(n)\le t\le \ell_2(n)} |\mathbb{P}(X_n\le t)-e^{-e^{-t}}|&=\frac{25\log \log n(1+o(1))}{8\gamma_n} \sup_{-\ell_1(n)\le t\le \ell_2(n)}e^{-e^{-t}-t}\\
&=\frac{25\log\log n}{4e \log n}(1+o(1)). \endaligned $$ 
Here, for the last equality, we use the fact that 
$$2\gamma_n=\log n(1+o(1)).$$ 

On the other hand, we have from the triangle inequality and the monotonicity of the distribution function that 
$$\aligned \sup_{x\le -\ell_1(n)}|\mathbb{P}(X_n\le t)-e^{-e^{-t}}|&\le \mathbb{P}(X_n\le -\ell_1(n))+e^{-e^{\ell_1(n)}}\lesssim e^{-e^{\ell_1(n)}}\ll \frac{1}{\log n}. \endaligned $$
As a parallel result, it holds 
$$\aligned \sup_{x\ge \ell_2(n)}|\mathbb{P}(X_n\le t)-e^{-e^{-t}}|&\le \mathbb{P}(X_n\ge \ell_2(n))+e^{-e^{-\ell_2(n)}}\lesssim e^{-e^{-\ell_2(n)}}\ll \frac{1}{\log n}. \endaligned $$ 
Combining these three supremums together, we get the desired Berry-Esseen bound for the rightmost eigenvalue.
Similarly, 
$$e^{-t}-{\rm Tr} (\mathbb{K}_n|_{\widetilde{A}(t)})=e^{-t}\frac{t^2+4t-4\log\log n+O(1)}{2\widetilde{\gamma}_n},$$ 
which is uniformly $o(1)$ for $-\ell_1(n)\le t\le \ell_2(n).$ Thereby, 
$$\aligned |e^{-{\rm Tr} (\mathbb{K}_n|_{\widetilde{A}(t)})}-e^{-e^{-t}}|&=e^{-e^{-t}}|\exp(e^{-t}-{\rm Tr} (\mathbb{K}_n|_{\widetilde{A}(t)}))-1|\\
&=e^{-e^{-t}-t} \frac{|t^2+4t-4\log\log n+O(1)|}{2\widetilde{\gamma}_n}.
\endaligned $$
We derive from Proposition \ref{HS} that 
$$|\det({\rm I}-\mathbb{K}_n|_{\widetilde{A}(t)})-\exp(-{\rm Tr} (\mathbb{K}_n|_{\widetilde{A}(t)}))|\ll |e^{-{\rm Tr} (\mathbb{K}_n|_{\widetilde{A}(t)})}-e^{-e^{-t}}|.$$ 
Therefore, similarly it follows that  
$$\aligned \sup_{-\ell_1(n)\le t\le \ell_2(n)} |\mathbb{P}(\widetilde{X}_n\le t)-e^{-e^{-t}}|&=(1+o(1))\sup_{-\ell_1(n)\le t\le \ell_2(n)} |e^{-{\rm Tr} (\mathbb{K}_n|_{\widetilde{A}(t)})}-e^{-e^{-t}}|\\
&=\frac{2\log \log n(1+o(1))}{\widetilde{\gamma}_n} \sup_{-\ell_1(n)\le t\le \ell_2(n)}e^{-e^{-t}-t}\\
&=\frac{2\log\log n}{e \log n}(1+o(1)). \endaligned $$ 
 The same analysis yields the supremum over two side intervals is negligible and then  
the corresponding proof for the spectral radius is completed. 

Now we provide the proof of Theorem \ref{thmrealpart}.

\subsection{Proof of Theorem \ref{thmrealpart}}

As explained in the introduction, to study the large deviations of \(\max_{1\le i\le n}\Re \sigma_i\), it suffices to consider the trace of \(\mathbb{K}_n\) restricted to the half-plane \(\{\Re z>t\}\) for \(t>t_0\).

Define
\[
R_k(t):=\int_{\Re z\ge t} \exp(-nV(|z|))\,|z|^{2k}\,d^2z,\qquad 0\le k\le n-1.
\]
Then
\[
\operatorname{Tr}(\mathbb{K}_n|_{\{\Re z>t\}})
=\int_{\Re z\ge t}K_n(z,z)\,dz
=\sum_{k=1}^{n}\frac{R_{n-k}(t)}{J_{n-k}}.
\]
Recall that \(\varphi_u(r)=V(r)-(2-u)\log r\) for \(0\le u<2\). Proceeding analogously to the derivation of \eqref{jnklasttogo} and taking \(u=2k/n\), we obtain
\begin{equation}
\begin{aligned}
R_{n-k}(t)
&=(1+O(n^{-1})) \sqrt{2\pi t}\,\frac{e^{-n \varphi_u(t)}}{n^{3/2}(\varphi_u'(t))^{3/2}}\\
&=(1+O(n^{-1})) \sqrt{2\pi }\,e^{-nV(t)}t^{2n-2k+2}n^{-3/2}(tV'(t)-2+\frac {2k}n)^{-3/2}\\
&\sim \sqrt{2\pi }\,e^{-nV(t)}t^{2n-2k+2}n^{-3/2}(tV'(t)-2)^{-3/2},
\end{aligned}
\end{equation}
uniformly for \(t>t_0\) and \(1\le k\le j_n\). Combining this with Lemma~\ref{lemjk} yields
\begin{equation}\label{quotlarge}
\frac{R_{n-k}(t)}{J_{n-k}}
\sim \frac{\sqrt{\alpha }\,t}{2\pi  n (tV'(t)-2)^{3/2}}
\exp(-nV(t)+nV(t_0)-\frac{2k^2}{\alpha n t_0^2})\,(\frac{t}{t_0})^{2n-2k+1}.
\end{equation}

When summing over \(1\le k\le j_n\), the factor \(\exp(-\frac{2k^2}{\alpha n t_0^2})\) does not affect the leading order, just as in the derivation of \eqref{suminitial}. Hence
\begin{equation}\label{sumquotlarge}
\begin{aligned}
\sum_{k=1}^{j_n}\frac{R_{n-k}(t)}{J_{n-k}}
&\sim \frac{\sqrt{\alpha }\,t}{2\pi  n (tV'(t)-2)^{3/2}}
\exp(-nV(t)+nV(t_0))\,(\frac{t}{t_0})^{2n}
\sum_{k=1}^{j_n}(\frac{t_0}{t})^{2k-1}\\
&\sim \frac{\sqrt{\alpha }\,t^2 t_0}{2\pi  n(t^2-t_0^2) (tV'(t)-2)^{3/2}}
\exp(-nV(t)+nV(t_0))\,(\frac{t}{t_0})^{2n}.
\end{aligned}
\end{equation}

Comparing \eqref{quotlarge} with \eqref{sumquotlarge}, and using the fact
\[
n \exp(-\frac{2j_n^2}{\alpha n t_0^2})(\frac{t_0}{t})^{2j_n}\ll 1
\]
together with the monotonicity of \(\frac{L_{n-k}(t)}{J_{n-k}}\), we obtain
\[
\sum_{k=1}^{n}\frac{R_{n-k}(t)}{J_{n-k}}
\sim \sum_{k=1}^{j_n}\frac{R_{n-k}(t)}{J_{n-k}}
\sim \frac{\sqrt{\alpha }\,t^2 t_0}{2\pi  n(t^2-t_0^2) (tV'(t)-2)^{3/2}}
\exp(-nV(t)+nV(t_0))\,(\frac{t}{t_0})^{2n}.
\]

Set
\[
g(t):=V(t)-V(t_0)-2\log \frac{t}{t_0}.
\]
Since \(t_0\) is the unique minimizer of \(V(t)-2\log t\), we have \(g(t)>g(t_0)=0\). This ensures that
\[
\operatorname{Tr}(\mathbb{K}_n|_{\{\Re z>t\}})=\sum_{k=1}^{n}\frac{R_{n-k}(t)}{J_{n-k}}
\asymp \frac1n \exp(-n g(t))=o(1).
\]
Therefore,
\[
\det({\rm I}-\mathbb{K}_n|_{\{\Re z>t\}})
\sim \exp(-\operatorname{Tr}(\mathbb{K}_n|_{\{\Re z>t\}}))
\sim 1-\operatorname{Tr}(\mathbb{K}_n|_{\{\Re z>t\}}).
\]
Consequently, \(\|\mathbb{K}_n|_{\{\Re z>t\}}\|_2\ll 1\), and furthermore
\[
\mathbb{P}(\max_i\Re\sigma_i>t)
=1-\det({\rm I}-\mathbb{K}_n|_{\{\Re z>t\}})
\sim \operatorname{Tr}(\mathbb{K}_n|_{\{\Re z>t\}}).
\]
This completes the large deviation part for fixed \(t>t_0\).

We now consider the scaling \(t=t_0+ d_n\), with \(d_n\to 0\). Proceeding analogously to the fixed-\(t\) case, we obtain
\[
\begin{aligned}
\operatorname{Tr}(\mathbb{K}_n|_{\{\Re z>t_0+d_n\}})
&\sim \frac{\sqrt{\alpha}\,t_0^2}{4\pi  n\,  d_n\,((t_0+d_n)V'(t_0+d_n)-2)^{3/2}} \\
&\quad\times \exp(-nV(t_0+ d_n)+nV(t_0))\,(\frac{t_0+ d_n}{t_0})^{2n}.
\end{aligned}
\]
Using Taylor expansion with \(V'(t_0)=2/t_0\) and \(\alpha= V''(t_0)+2/t_0^2\), we get
\begin{equation}\label{modeV}
V(t_0+ d_n)-V(t_0)-2\log(1+ \frac{ d_n}{t_0})
=\frac{\alpha}{2} d_n^2+O(|d_n|^3),
\end{equation}
and
\begin{equation}\label{demoV}
(t_0+d_n)V'(t_0+ d_n)-2\sim \alpha  t_0 d_n.
\end{equation}
Combining these estimates yields the order
\[
\operatorname{Tr}(\mathbb{K}_n|_{\{\Re z>t_0+  d_n\}})
\asymp n^{-1} d_n^{-5/2}\exp(-\frac{\alpha}{2} n d_n^2).
\]
In particular, taking \(d_n=\frac{t\sqrt{\gamma_n}}{\sqrt{n\alpha}}\) with $t>1$, we have
\begin{equation}\label{Tro1}
n^{-1} d_n^{-5/2}\exp(-\frac{\alpha}{2} t^2 d_n^2)
\asymp (n(\log n)^5)^{\frac{1-t^2}{4}}=o(1).
\end{equation}
More generally, this implies that
\[
\operatorname{Tr}(\mathbb{K}_n|_{\{\Re z>t_0+ d_n\}})=o(1)
\]
whenever \(d_n\gg \frac{\sqrt{\gamma_n}}{\sqrt{n\alpha}}\). Hence, under this condition, the determinant approximation applies and we obtain the probability asymptotics
\begin{equation}\label{modefinal}
\begin{aligned}
\mathbb{P}(\max\Re\sigma_i\ge t_0+d_n)
&\sim \operatorname{Tr}(\mathbb{K}_n|_{\{\Re z>t_0+d_n\}}) \\
&\sim \frac{\sqrt{t_0}}{4\pi \alpha n d_n^{5/2}}
\exp(-nV(t_0+d_n)+nV(t_0))\,(\frac{t_0+d_n}{t_0})^{2n}.
\end{aligned}
\end{equation}

In the moderate deviation regime \(\sqrt{\log n}/\sqrt{n}\ll d_n\ll n^{-1/3}\), it follows from \eqref{modeV} that
\[
nV(t_0+ d_n)-nV(t_0)-2n\log(1+ \frac{d_n}{t_0})
=\frac{n\alpha}{2}d_n^2+o(1),
\]
so the asymptotic simplifies to
\[
\mathbb{P}(\max\Re\sigma_i\ge t_0+d_n)
\sim \frac{\sqrt{t_0}}{4\pi \alpha n d_n^{5/2}}
\exp(-\frac{\alpha  n d_n^2}{2}).
\]
For the specific choice \(d_n=\frac{t\sqrt{\gamma_n}}{\sqrt{\alpha n}}\) with $t>1$, we obtain
\begin{equation}\label{smalltolast}
\begin{aligned}
\mathbb{P}(\max\Re\sigma_i\ge t_0+t\frac{\sqrt{\gamma_n}}{\sqrt{\alpha n}})
\sim \frac{(\alpha n t_0^2)^{1/4}}{4\pi (t^2 \gamma_n)^{5/4}}
\exp(-\frac{t^2\gamma_n}{2}),
\end{aligned}
\end{equation}
and with \(\gamma_n=\frac12\log\frac{\alpha n t_0^2}{8\pi^4(\log n)^5}\), this becomes
\[
\mathbb{P}(\max\Re\sigma_i\ge t_0+t\frac{\sqrt{\gamma_n}}{\sqrt{\alpha n}})
\sim t^{-5/2}(\frac{8\pi^4 (\log n)^5}{n \alpha t_0^2})^{\frac{t^2-1}{4}}.
\]

Finally, consider deviations of the form
\[
\mathbb{P}(\max_{1\le i\le n}\Re \sigma_i\ge t_0+\frac{\sqrt{\gamma_n}}{\sqrt{\alpha n}}+\frac{ v_n}{\sqrt{\alpha n \gamma_n}})
\qquad (1\ll v_n\ll \log n).
\]
The general formula \eqref{modefinal} remains valid with \[
d_n=\frac{\sqrt{\gamma_n}}{\sqrt{\alpha n}}+\frac{ v_n}{\sqrt{\alpha n \gamma_n}}.
\]
Following the same computation as in \eqref{smalltolast}, we get
\[
\begin{aligned}
&\mathbb{P}(\max\Re\sigma_i\ge t_0+\frac{\sqrt{\gamma_n}}{\sqrt{\alpha n}}+\frac{ v_n}{\sqrt{\alpha n \gamma_n}})\\
&\sim \frac{(\alpha n t_0^2)^{1/4}}{4\pi \gamma_n^{5/4}}
\exp(-\frac{\gamma_n}{2}-v_n-\frac{v_n^2}{2\gamma_n})
=\exp(-v_n-\frac{v_n^2}{2\gamma_n}).
\end{aligned}
\]

We now turn to the spectral radius.

\subsection{Proof of Theorem \ref{thmsp}}

The proof for the spectral radius follows the same line of reasoning. For completeness, we state the key steps.

For \(t>t_0\), define
\[
\widetilde{R}_{k}(t)=\int_{|z|\ge t} \exp(-nV(|z|))\,|z|^{2k}\,d^2z,\qquad 0\le k\le n-1,
\]
and set \(i_n=\lfloor \alpha n \widetilde{\gamma}_n t_0^2\rfloor\). Analogously to the rightmost eigenvalue case, we have the basic correspondence
\begin{equation}\label{connection}
\mathbb{P}(\max_{1\le i\le n}|\sigma_i|>t)
\sim \operatorname{Tr}(\mathbb{K}_n|_{\{|z|>t\}})
\sim \sum_{k=1}^{i_n} \frac{\widetilde{R}_{n-k}(t)}{J_{n-k}}.
\end{equation}
The same saddle-point analysis yields, uniformly for \(1\le k\le i_n\),
\[
\widetilde{R}_{n-k}(t)\sim 2\pi e^{-n\varphi_u(t)}\frac{1}{n\varphi_u'(t)}
\sim \frac{2\pi e^{-nV(t)}t^{2n-2k+1}}{n(V'(t)-\frac{2}{t})},
\]
and thus
\[
\frac{\widetilde{R}_{n-k}(t)}{J_{n-k}}
\sim \frac{\sqrt{\alpha}}{\sqrt{2\pi n}(V'(t)-\frac 2t)}
\exp(-nV(t)+nV(t_0)-\frac{2k^2}{\alpha n t_0^2})\,(\frac{t}{t_0})^{2n-2k+1}.
\]
Summing over \(k\le i_n\), the exponential factor is negligible, and the geometric series gives
\[
\sum_{k=1}^{i_n} \frac{\widetilde{R}_{n-k}(t)}{J_{n-k}}
\sim  \frac{\sqrt{\alpha}\,t_0 t}{\sqrt{2\pi n}(t^2-t_0^2)(V'(t)-\frac 2t)}
\exp(-nV(t)+nV(t_0))\,(\frac{t}{t_0})^{2n}.
\]
Therefore, the large deviation for the spectral radius is
\[
\mathbb{P}(\max_{1\le i\le n}|\sigma_i|>t)
\sim \frac{\sqrt{\alpha}\,t_0 t}{\sqrt{2\pi n}(t^2-t_0^2)(V'(t)-\frac 2t)}
\exp(-nV(t)+nV(t_0))\,(\frac{t}{t_0})^{2n}.
\]

Now let \(t=t_0+d_n\) with \(\frac{\sqrt{\log n}}{\sqrt{n}}\ll d_n\ll 1\). A direct calculation gives, uniformly in \(1\le k\le i_n\),
\[
\widetilde{R}_{n-k}(t_0+ d_n)\sim 2\pi e^{-n\varphi_u(t_0+d_n)}\frac{1}{n\varphi_u'(t_0+d_n)}.
\]
From Lemma~\ref{lemphi},
\[
\varphi_u'(t_0+d_n)\sim \alpha  d_n+\frac{2k-1}{n t_0}\sim \alpha d_n
\qquad\text{for } \frac{\sqrt{\log n}}{\sqrt{n}}\ll d_n\ll 1.
\]
Using Lemma~\ref{lemjk}, we obtain
\[
\frac{\widetilde{R}_{n-k}(t_0+d_n)}{J_{n-k}}
\sim \frac{\exp(-n V(t_0+d_n)+nV(t_0))(t_0+d_n)^{2n-2k+1}}
{\sqrt{2\pi \alpha n} \,  d_n\, t_0^{2n-2k+1}}
\exp(-\frac{2k^2}{\alpha n t_0^2}).
\]
Ignoring the exponential factor and summing the geometric series, we get
\[
\sum_{k=1}^{i_n}\frac{\widetilde{R}_{n-k}(t_0+d_n)}{J_{n-k}}
\sim \frac{t_0\exp(-n V(t_0+d_n)+nV(t_0))(t_0+d_n)^{2n}}
{2\sqrt{2\pi \alpha n} \,  d_n^2 t_0^{2n}}.
\]
Combining this with \eqref{connection} completes the proof for this regime. When \(\frac{\sqrt{\log n}}{\sqrt{n}}\ll d_n\ll n^{-1/3}\), using \eqref{modeV} simplifies the above to
\begin{equation}\label{spmoderate}
\mathbb{P}(\max_{1\le i\le n}|\sigma_i|>t_0+ d_n)
\sim \frac{t_0}{2\sqrt{2\pi \alpha n} \, d_n^2 }
\exp(-\frac{1}{2}\alpha n  d_n^2).
\end{equation}

For the particular choice \(d_n=\frac{t\sqrt{\widetilde{\gamma}_n}}{\sqrt{\alpha n}}\) with $t>1$, we still have \(\varphi_u'(t_0+ d_n)\sim \alpha  d_n+\frac{2k-1}{n t_0}\), but the extra term \(\frac{2k-1}{n t_0}\) does not affect the leading order, so \eqref{spmoderate} remains valid. Substituting the explicit form of \(d_n\) yields
\[
\mathbb{P}(\max_{1\le i\le n}|\sigma_i|>t_0+t \frac{\sqrt{\widetilde{\gamma}_n}}{\sqrt{\alpha n}})
\sim t^{-2}(\frac{8\pi (\log n)^2}{\alpha n t_0^2})^{\frac{t^2-1}{2}}.
\]
Finally, if we replace \(t \frac{\sqrt{\widetilde{\gamma}_n}}{\sqrt{\alpha n}}\) by \(\frac{\sqrt{\widetilde{\gamma}_n}}{\sqrt{\alpha n}}+v_n\) (with \(v_n\gg 1\)), the same procedure gives
\[
\mathbb{P}(\max_{1\le i\le n}|\sigma_i|>t_0+\frac{\sqrt{\widetilde{\gamma}_n}}{\sqrt{\alpha n}}+v_n)
\sim \exp(-v_n-\frac{v_n^2}{2\widetilde{\gamma}_n}).
\]
The proof is complete. 

\section{Proof of Theorem \ref{minBerry}}
In this section, we give the proof of Theorem \ref{minBerry}. Recall
\[{\tau}_n=\log \frac{nV''(t_1) t_1^2}{8\pi (\log n)^2}.
\]
We set 
\[
\bar{F}_n(t)=\mathbb{P}(\min_{1\le i\le n }|\sigma_i|\ge t_1-\frac{\sqrt{\tau}_n}{\sqrt{V''(t_1) n}}+\frac{t}{\sqrt{V''(t_1) n\tau_n}}).
\]
Define 
${b}(t)=t_1-\frac{\sqrt{{\tau}_n}}{\sqrt{V''(t_1) n}}(1-\frac{t}{{\tau}_n})$
and the relevant set 
$${B}(t)=\{z: |z|\le {b}(t)\}.$$ 
As for the spectral radius,
$$\bar{F}_n(t)=\det({\rm I}-\mathbb{K}_n|_{{B}(t)})$$ 
and the proof of Theorem \ref{minBerry} relies on the trace and Hilbert-Schmidt norm of $\mathbb{K}_n$ restricted to ${B}(t).$  We state them as a Proposition.
\begin{prop}\label{propleft} Let ${B}(t)$ and $\mathbb{K}_n$ be defined as above. We have 
\begin{equation}\label{Trleft}
{\rm Tr}(\mathbb{K}_n|_{{B}(t)})=e^{t}(1+\frac{2\log\log n}{{\tau}_n}+\frac{2t}{{\tau}_n}-\frac{t^2}{2{\tau}_n})(1+O((\log n)^{-1}))\end{equation}
uniformly on $|t|\lesssim (\log n)^{1/4}.$ Meanwhile, for the Hilbert-Schmidt norm, we have 
\begin{equation}\label{HSleft}
\|\mathbb{K}_n|_{{B}(t)}\|_2\ll e^{t}(\frac{\log n}{n})^{1/4}. \end{equation}
\end{prop}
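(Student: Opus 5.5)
We mirror the treatment of ${\rm Tr}(\mathbb{K}_n|_{\widetilde{A}(t)})$ in Proposition~\ref{keyprop} and of $\|\mathbb{K}_n|_{\widetilde{A}(t)}\|_2$ in Proposition~\ref{HS}, replacing the outer edge $t_0$ by the inner edge $t_1$ and working with the small indices $k$. Since $B(t)$ is a disc, rotational invariance makes the operator diagonal in the monomial basis. Thus
\[
{\rm Tr}(\mathbb{K}_n|_{B(t)})=\sum_{k=0}^{n-1}P_k(t),\qquad P_k(t):=\frac{1}{J_k}\int_{|z|\le b(t)}e^{-nV(|z|)}|z|^{2k}\,dz,
\]
and $\|\mathbb{K}_n|_{B(t)}\|_2^2=\sum_k P_k(t)^2$, because the off-diagonal terms vanish by the angular integral.

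\textbf{Step 1: asymptotics of $J_k$ and of the truncated integral for small $k$.} I will prove analogues of Lemmas~\ref{lemphi}, \ref{lemjk} and \ref{lemjktilde}. Put $\psi_u(r)=V(r)-u\log r$ with $u=(2k+1)/n$ and $0\le k\lesssim\sqrt{n\log n}$. Since $rV'(r)$ is strictly increasing and $V'(t_1)=0$, the equation $\psi_u'=0$ has a unique root $s_u=t_1+\frac{u}{t_1V''(t_1)}+O(u^2)$. This gives $\psi_u(s_u)=\psi_u(t_1)-\frac{u^2}{2V''(t_1)t_1^2}+O(u^3)$, and hence
\[
J_k=2\pi\sqrt{\tfrac{2\pi}{nV''(t_1)}}\,e^{-nV(t_1)}t_1^{2k+1}\exp\Bigl(\tfrac{2k^2}{V''(t_1)nt_1^2}\Bigr)\bigl(1+o((\log n)^{-1})\bigr).
\]
On $[0,b(t)]$ the function $\psi_u$ is decreasing, because $b(t)<t_1<s_u$. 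So Laplace's method at the right endpoint gives
\[
\int_0^{b(t)}e^{-n\psi_u}\,dr\sim \frac{e^{-n\psi_u(b(t))}}{n|\psi_u'(b(t))|},
\]
with $n|\psi_u'(b)|=\sqrt{V''(t_1)n\tau_n}\,\bigl(1-\tfrac{t}{\tau_n}+\tfrac{2k}{t_1\sqrt{V''(t_1)n\tau_n}}\bigr)(1+o((\log n)^{-1}))$ and
\[
n\psi_u(b)=nV(t_1)+\tfrac{\tau_n}{2}\bigl(1-\tfrac{t}{\tau_n}\bigr)^2+(2k+1)\log b+o((\log n)^{-1}).
\]
Here I need $V\in C^3$ near $t_1$. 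I also need to check that the contribution near $r=0$ is negligible: for $k=0$ the integrand is $e^{-nV(r)}r$, and $V(r)>V(t_1)$ for $r<t_1$, so the endpoint $b$ still dominates provided $V$ does not blow down at $0$. Monotonicity of $P_k(t)$ in $k$ (now increasing) follows exactly as in Lemma~\ref{decreasing}, via \cite[Lemma 2.4]{JQ19}.

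\textbf{Step 2: summation.} Set $i_n=\lfloor\sqrt{V''(t_1)n\tau_n t_1^2}\rfloor$ and split the sum at $i_n$. The tail $\sum_{k>i_n}P_k\le nP_{i_n}$ carries the factor $(b/t_1)^{2i_n}\approx e^{-2\tau_n+2t}$ together with the Gaussian factor. It is therefore $\ll e^{t}(\log n)^{-1}$, as in \eqref{nQin}. On $k\le i_n$, the factors $\bigl(1+\frac{2k}{\cdots}\bigr)^{-1}\exp\bigl(-\frac{2k^2}{V''nt_1^2}\bigr)$ are sandwiched as in \eqref{twoside} and produce only $O((\log n)^{-1})$ relative errors. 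The geometric series gives
\[
\sum_k (b/t_1)^{2k}=\frac{t_1\sqrt{V''(t_1)n}}{2\sqrt{\tau_n}}\Bigl(1-\frac{t}{\tau_n}\Bigr)^{-1}\bigl(1+O((\log n)^{-1})\bigr).
\]
Multiplying by $\frac{1}{\sqrt{2\pi\tau_n}}\bigl(1-\frac{t}{\tau_n}\bigr)^{-1}e^{-\frac{\tau_n}{2}+t-\frac{t^2}{2\tau_n}}$ and using $e^{-\tau_n/2}=\frac{2\sqrt{2\pi}\log n}{\sqrt{nV''(t_1)t_1^2}}$ yields
\[
e^{t-\frac{t^2}{2\tau_n}}\Bigl(1-\frac{t}{\tau_n}\Bigr)^{-2}\frac{\log n}{\tau_n}.
\]
Expanding, with $\log n/\tau_n=1+\frac{2\log\log n+O(1)}{\tau_n}$, gives \eqref{Trleft}.

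\textbf{Step 3: Hilbert--Schmidt norm.} By the diagonal structure, $\|\mathbb{K}_n|_{B(t)}\|_2^2\le\sum_{k\le i_n}P_k^2+nP_{i_n}^2$. Since $P_k^2\lesssim \frac{e^{2t-\tau_n}}{\tau_n}(b/t_1)^{4k}$, summing gives $\lesssim e^{2t}\frac{\log n}{\sqrt n}$. The bound \eqref{HSleft} then follows up to a constant; to get the strict $\ll$ one uses the extra $\tau_n^{-1}$ factor, i.e. the sum is actually $\asymp e^{2t}\log n/(\tau_n\sqrt n)$.

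\textbf{Main obstacle.} The main obstacle is Step 1 near the origin and for $k=0$. There is no $|z|^{2k}$ confinement there, and only the strict increase of $rV'(r)$ on all of $(0,\infty)$ guarantees that $V$ is decreasing on $(0,t_1)$ and that the Laplace endpoint analysis at $b(t)$ is uniform in $0\le k\le i_n$.
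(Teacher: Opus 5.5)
Your trace argument follows the paper's proof: rotational diagonalization, endpoint Laplace at $b(t)$ for the truncated integrals, interior Laplace for $J_k$ near $t_1$, the cut at $k_n\asymp\sqrt{n\tau_n}$ with tail $\le nP_{k_n}(t)$, the geometric series, and $e^{-\tau_n/2}=2\sqrt{2\pi}\log n/(t_1\sqrt{nV''(t_1)})$. Your $J_k$ asymptotic, with the factor $\exp(2k^2/(V''(t_1)nt_1^2))$, is more accurate than the paper's \eqref{Jknewtilde}. That formula drops $e^{-n(V(t_u)-V(t_1))}$ and so carries $4k^2$ in \eqref{quotienleft}; either way this Gaussian factor costs only $O(\tau_n^{-1})$.

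There are three slips to fix in Steps 1--2:
\begin{enumerate}
\item $P_k(t)=\mathbb{P}(Y_{k+1}\le b(t))$ is \emph{decreasing} in $k$, not increasing. Decreasing is exactly what $\sum_{k>i_n}P_k\le nP_{i_n}$ requires.
\item $n\psi_u(b)$ contains $-(2k+1)\log b$, not $+(2k+1)\log b$.
\item The slope $|\psi_u'(b)|\asymp\sqrt{\tau_n/n}$ is small, so the endpoint Laplace step has relative error of order $n\psi_u''(b)/(n\psi_u'(b))^2\asymp\tau_n^{-1}$. Your ``$\sim$'' must therefore be recorded as $1+O((\log n)^{-1})$, which still fits the error budget of \eqref{Trleft}.
\end{enumerate}
The origin is not an obstacle: $V'<0$ on $(0,t_1)$, so $V$ is decreasing there and cannot blow down.

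The genuine error is in Step 3. We have $\sum_{k\ge0}(b/t_1)^{4k}\asymp t_1/(t_1-b(t))\asymp\sqrt{n/\tau_n}$ and $e^{-\tau_n}\asymp(\log n)^2/n$. Summing your bound $P_k^2\lesssim\tau_n^{-1}e^{2t-\tau_n}(b/t_1)^{4k}$ therefore gives $e^{2t}\sqrt{\log n/n}$, not $e^{2t}\log n/\sqrt n$; you lost the $\tau_n^{-1/2}$ from the geometric series. There is also no ``extra $\tau_n^{-1}$'' that would bring the sum down to $e^{2t}\log n/(\tau_n\sqrt n)$.

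This order is in fact sharp. For $k\lesssim\sqrt{n/\tau_n}$, every correction factor in $P_k(t)$ is bounded below; heuristically, a trace of size $e^t$ is spread evenly over $\asymp\sqrt{n/\tau_n}$ indices. Hence
\[
\|\mathbb{K}_n|_{B(t)}\|_2^2=\sum_{k}P_k(t)^2\asymp e^{2t}\sqrt{\frac{\log n}{n}},\qquad\text{i.e.}\qquad \|\mathbb{K}_n|_{B(t)}\|_2\asymp e^{t}\Bigl(\frac{\log n}{n}\Bigr)^{1/4}.
\]
So the strict ``$\ll$'' in \eqref{HSleft} cannot be reached by any refinement. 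The paper's own computation ends with $\|\mathbb{K}_n|_{B(t)}\|_2\lesssim e^{t}(\log n)^{1/4}n^{-1/4}$, and that $\lesssim$ is the correct form of \eqref{HSleft}. It is also all the proof of Theorem~\ref{minBerry} needs. Via \eqref{comp}, for $t$ in the range used there, it bounds $|\det({\rm I}-\mathbb{K}_n|_{B(t)})-\exp(-{\rm Tr}(\mathbb{K}_n|_{B(t)}))|$ by a constant times $e^{t-e^{t}}(\log n/n)^{1/4}$. That is negligible against the $\log\log n/\log n$ rate.
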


\begin{proof} 
We first verify the expression of ${\rm Tr}(\mathbb{K}_n|_{{B}(t)}).$
Setting 
$${T}_{k}(t):=2\pi\int_0^{{b}(t)} e^{-nV(r)} r^{2k+1} dr.$$
By definition, 
$${\rm Tr}(\mathbb{K}_n|_{{B}(t)})=\sum_{k=0}^{n-1}\frac{{T}_{k}(t)}{J_k}=\sum_{k=0}^{k_n}\frac{{T}_{k}(t)}{J_k}+\sum_{k_n+1}^{n-1}\frac{{T}_{k}(t)}{J_k},$$ where 
$k_n=\lfloor\sqrt{ t_1^2 V''(t_1) {\tau}_n  n }\rfloor.$ 
Lemma \ref{decreasing} yields 
\begin{equation}\label{decomleft}
	\sum_{k=0}^{k_n}\frac{{T}_{k}(t)}{J_k}\le {\rm Tr}(\mathbb{K}_n|_{{B}(t)})\le \sum_{k=0}^{k_n}\frac{{T}_{k}(t)}{J_k}+ \frac{n{T}_{k_n}(t)}{J_{k_n}}.
\end{equation}
Next, we work on ${T}_{k}(t)$ and $J_k$ for all $0\le k\le k_n.$
Since $r V'(r)$ is strictly increasing, it follows that 
$rV'(r)<0$ for $0<r<t_1.$ Hence $\varphi_u(r)=V(r)-(2-u)\log r$ with $u=2-\frac{2k+1}{n}$ is strictly decreasing on $(0, {b}(t))$ and then the Laplace method gives 
$$ {T}_{k}(t)=2\pi \exp(-n V({b}(t))) ({b}(t))^{2k+2}\frac{1+O(n^{-1})}{-n {b}(t) V'({b}(t))+2k+1 }.$$ 
Analogously, we have
\begin{equation}\label{JKnew} J_{k}=2\pi (1+O(n^{-1}))e^{-nV(t_u)}t_u^{2k+1}\sqrt{\frac{2\pi}{n\varphi''_u(t_u)}}.\end{equation}
Recall $V'(t_u)=(2-u)/t_u.$ For $k\ll n,$ we see 
$t_uV'(t_u)=2-u=o(1)$ and then it follows from the continuity of the function $rV'(r)$ that $t_u=t_1+o(1).$ Thus, the Taylor formula and the fact $V'(t_1)=0$ give 
$$\aligned 
2-u&=t_uV'(t_u)=t_1V''(t_1)(t_u-t_1)+O((t_u-t_1)^2). 
\endaligned $$  
Then $t_u>t_1$ satisfying $t_u-t_1=O(k n^{-1})$ and  
\begin{equation}\label{tuleft} t_u=\frac{2-u}{t_1 V''(t_1) }+t_1+O(k^2n^{-2})=t_1+\frac{2k}{nt_1V''(t_1)}+O(n^{-1}\log n).\end{equation}	
Consequently, we have 
$$\aligned V(t_u)&=V(t_1)+\frac{1}{2}V''(t_1)(t_u-t_1)^2+O(n^{-3/2}(\log n)^{3/2})\\
&=V(t_1)+\frac{2k^2}{n^2 t_1^2V''(t_1)}+O(n^{-3/2}(\log n)^{3/2})
\endaligned $$
and similarly 
$$\varphi_u''(t_u)=V''(t_1)+O(k n^{-1}).$$ 
Putting these two asymptotics into the expression  \eqref{JKnew} of $J_k,$ we obtain 
\begin{equation}\label{Jknewtilde} J_k=2\pi \sqrt{\frac{2\pi}{nV''(t_1)}}(1+O(k n^{-1})) e^{-n V(t_1)} t_u^{2k+1}.\end{equation}
Now 
$${b}(t)V'({b}(t))=-t_1\frac{\sqrt{V''(t_1){\tau}_n}}{\sqrt{ n}}(1-\frac{t}{{\tau}_n})+O(n^{-1}\log n),$$ 
 whence 
$$\aligned 2k+1-n{b}(t)V'({b}(t))&=t_1\sqrt{n V''(t_1){\tau}_n}(1-\frac{t}{{\tau}_n})+2k+O(\log n)\\
&=t_1\sqrt{n V''(t_1){\tau}_n}(1-\frac{t}{{\tau}_n}+\frac{2k}{t_1\sqrt{n V''(t_1){\tau}_n}})(1+O(\frac{\sqrt{\log n}}{\sqrt{n}})).\endaligned $$
Also, 
$$\aligned V({b}(t))&=V(t_1)+\frac{V''(t_1)}{2}({b}(t)-t_1)^2+O(|{b}(t)-t_1|^3)\\
&=V(t_1)+\frac{{\tau}_n}{2n}(1-\frac{t}{{\tau}_n})^2+O(n^{-3/2}(\log n)^{3/2}), \endaligned $$
Thereby, we derive 
\begin{equation}\label{tildeLknew}{T}_k(t)=\frac{2\pi ({b}(t))^{2k+2}(1+O(n^{-1/2}(\log n)^{3/2}))}{t_1\sqrt{n V''(t_1){\tau}_n}(1-\frac{t}{{\tau}_n}+\frac{2k}{t_1\sqrt{n V''(t_1){\tau}_n}})}\exp(-n V(t_1)-\frac{{\tau}_n}{2}(1-\frac{t}{{\tau}_n})^2).\end{equation} 
Combining \eqref{Jknewtilde} and \eqref{tildeLknew}, we have 
$${P}_k(t):=\frac{{T}_k(t)}{J_k}=\frac{({b}(t))^{2k+2}(1+O(n^{-1/2}(\log n)^{3/2})}{t_u^{2k+1}t_1\sqrt{2\pi{\tau}_n}(1-\frac{t}{{\tau}_n}+\frac{2k}{t_1\sqrt{n V''(t_1){\tau}_n}})}\exp(-\frac{{\tau}_n}{2}(1-\frac{t}{{\tau}_n})^2).$$
Using the definition of ${b}(t)$ and \eqref{tuleft}, together with the Taylor formula, we get 
$$\aligned \frac{({b}(t))^{2k+2}}{t_u^{2k+1}t_1}&=(1-\frac{\sqrt{{\tau}_n}}{t_1\sqrt{V''(t_1) n}}(1-\frac{t}{\sqrt{{\tau}}_n})-\frac{2k}{n t_1^2 V''(t_1))})^{2k}(1+O((\log n)^{3/2} n^{-1/2}))\\
&=\exp(-\frac{2k\sqrt{{\tau}_n}}{t_1\sqrt{V''(t_1) n}}(1-\frac{t}{\sqrt{{\tau}}_n})-\frac{4k^2}{n t_1^2 V''(t_1))})(1+O((\log n)^{3/2} n^{-1/2})).
\endaligned $$
It follows that 
\begin{equation}\label{quotienleft}\aligned{P}_k(t)&=\frac{(1+O(n^{-1/2}(\log n)^{3/2})}{\sqrt{2\pi{\tau}_n}(1-\frac{t}{{\tau}_n}+\frac{2k}{t_1\sqrt{n V''(t_1){\tau}_n}})}\\
&\quad \times \exp(-\frac{2k\sqrt{{\tau}_n}}{t_1\sqrt{V''(t_1) n}}(1-\frac{t}{{\tau}_n})-\frac{4k^2}{n t_1^2 V''(t_1))}-\frac{{\tau}_n}{2}(1-\frac{t}{{\tau}_n})^2). \endaligned \end{equation}
Similarly as for the sum $\sum_{j=1}^{j_n}r_k(t),$ the factor 
$$\exp(-\frac{4k^2}{n t_1^2V''(t_1)})(1+\frac{2k}{t_1\sqrt{n V''(t_1){\tau}_n}(1-\frac{t}{{\tau}_n})})^{-1}$$ is negligible while offering the error $O((\log n)^{-1})$ for the sum $\sum_{k=0}^{k_n}{P}_k(t).$ Therefore, it follows from \eqref{quotienleft} that 
 \begin{align}\label{sumquotienleft}\sum_{k=0}^{k_n}{P}_k(t)&=\frac{(1+O((\log n)^{-1})}{\sqrt{2\pi{\tau}_n}(1-\frac{t}{{\tau}_n})}\exp(-\frac{{\tau}_n}{2}(1-\frac{t}{{\tau}_n})^2)\sum_{k=0}^{k_n}\exp(-\frac{2k\sqrt{{\tau}_n}}{t_1\sqrt{V''(t_1) n}}(1-\frac{t}{{\tau}_n}))\notag\\
 &=\frac{(1+O((\log n)^{-1})}{\sqrt{2\pi{\tau}_n}(1-\frac{t}{{\tau}_n})}\exp(-\frac{{\tau}_n}{2}(1-\frac{t}{{\tau}_n})^2)\times\frac{1-\exp(-\frac{2(k_n+1)\sqrt{{\tau}_n}}{t_1\sqrt{V''(t_1) n}}(1-\frac{t}{{\tau}_n}))}{1-\exp(-\frac{2\sqrt{{\tau}_n}}{t_1\sqrt{V''(t_1) n}}(1-\frac{t}{{\tau}_n}))}
 . 
  \end{align}
The choice $k_n=\sqrt{t_1^2V''(t_1)\tau_n n}$ gives $$\exp(-\frac{2(k_n+1)\sqrt{{\tau}_n}(1-\frac t{{\tau}_n})}{t_1\sqrt{V''(t_1) n}})\sim\exp(-2{\tau}_n+2t)\ll (\log n)^{-1}.$$
The Taylor formula yields 
$$1-\exp(-\frac{2\sqrt{{\tau}_n}}{t_1\sqrt{V''(t_1) n}}(1-\frac{t}{{\tau}_n}))=\frac{2\sqrt{{\tau}_n}}{t_1\sqrt{V''(t_1) n}}(1-\frac{t}{{\tau}_n})(1+o((\log n)^{-1})).$$
Therefore, we have from \eqref{sumquotienleft} and these two facts that 
$$\sum_{k=0}^{k_n}{P}_k(t)=\frac{t_1\sqrt{V''(t_1) n}(1+O((\log n)^{-1}))}{2\sqrt{2\pi }{\tau}_n(1-\frac{t}{{\tau}_n})^2}\exp(-\frac{{\tau}_n}{2}(1-\frac{t}{{\tau}_n})^2).$$
Now ${\tau}_n=\log \frac{nV''(t_1) t_1^2}{8\pi (\log n)^2},$ whence 
$$\exp(-\frac{{\tau}_n}{2})=\frac{2\sqrt{2\pi} \log n}{\sqrt{V''(t_1) n} \,t_1}.$$ Therefore, 
\begin{equation}\label{partialsumleft}\sum_{k=0}^{k_n}{P}_k(t)=\frac{e^{t}\log n }{{\tau}_n(1-\frac{t}{{\tau}_n})^2}\exp(-\frac{t^2}{2 {\tau}_n})(1+O((\log n)^{-1})).\end{equation} 
While for the remainder, we get from \eqref{quotienleft} and $k_n=t_1\sqrt{V''(t_1) n{\tau}_n}+O(1)$ that 
\begin{equation}\label{partial0sumleft}\aligned n{P}_{k_n}(t)&\asymp \frac{n}{\sqrt{\log n}}\exp(-\frac{2k_n\sqrt{{\tau}_n}}{t_1\sqrt{V''(t_1) n}}(1-\frac{t}{{\tau}_n})-\frac{4k_n^2}{n t_1^2 V''(t_1))}-\frac{{\tau}_n}{2}+t)\\
&\ll \exp(3 t-5{\tau}_n).\endaligned \end{equation} 
Plugging the expressions \eqref{partialsumleft} and \eqref{partial0sumleft} into \eqref{decomleft} that 
$${\rm  Tr}(\mathbb{K}_n|_{{B}(t)})=\frac{e^{t}\log n }{{\tau}_n(1-\frac{t}{{\tau}_n})^2}\exp(-\frac{t^2}{2 {\tau}_n})(1+O((\log n)^{-1})).$$
Now ${\tau}_n=\log \frac{V''(t_1) n t_1^2}{8\pi (\log n)^2}$ gives 
$$\log n={\tau}_n+2\log\log n+O(1).$$
The choice of $|t|\lesssim (\log n)^{1/4},$ together with corresponding Taylor's formulas, yield in further that 
$${\rm  Tr}(\mathbb{K}_n|_{{B}(t)})=e^{t}(1+\frac{2\log\log n}{{\tau}_n}+\frac{2t}{{\tau}_n}-\frac{t^2}{2{\tau}_n})(1+O((\log n)^{-1})).$$
We now work on the Hilbert-Schmidt  norm. 
Define analogously
\[
{N}_{j, k}(t)=
\frac{1}{\sqrt{J_{j}J_{k}}}
\int_{{B}(t)} e^{-nV(|z|)} z^{k}\bar{z}^{j}\,dz
\]
for $0\le j, k\le n-1.$ 
Writing \(z=re^{i\theta}\), for \(j\neq k\) we have
\[
{N}_{j,k}(t)=
\frac{1}{\sqrt{J_{j}J_{k}}}
\int_0^{{b}(t)}e^{-nV(r)}r^{2n-k-j+1}\,dr
\int_0^{2\pi} e^{i(j-k)\theta}\,d\theta=0
\]
and for $j=k,$  
$${N}_{k, k}(t)=\frac{{T}_k(t)}{J_k}={P}_k(t).$$
Consequently,
\[
\|\mathbb{K}_n|_{{A}(t)}\|_2^2
=
\sum_{k=0}^{n-1} {P}_k^2(t)
\le
\sum_{k=0}^{k_n}  {P}_k^2(t)+n{P}_{k_n}^2(t).
\]
Using \eqref{quotienleft}, we get
\[
{P}_k^2(t)
\lesssim
\frac{1}{\log n}
\times \exp(-\frac{4k\sqrt{{\tau}_n}}{t_1\sqrt{V''(t_1) n}}-{\tau}_n+2t).
\]
Hence, the geometric sum $$\sum_{k=0}^{+\infty}\exp(-\frac{4k\sqrt{{\tau}_n}}{t_1\sqrt{V''(t_1) n}})=\frac{1}{1-\exp(-\frac{4\sqrt{{\tau}_n}}{t_1\sqrt{V''(t_1) n}})}\asymp \frac{\sqrt{n}}{\sqrt{\log n}}$$ yields  
$$\sum_{k=0}^{k_n}{P}_k^2(t)\lesssim \frac{\sqrt{n}}{(\log n)^{3/2}}
\exp(-{\tau}_n+2t)\asymp e^{2t} n^{-\frac12}\sqrt{\log n}.$$
Also, from \eqref{partial0sumleft}, we have
\[
n{P}_{k_n}^2(t)
\asymp \frac{n}{\log n}\exp(-\frac{4k_n\sqrt{{\tau}_n}}{t_1\sqrt{V''(t_1) n}}-\frac{8k_n^2}{n t_1^2 V''(t_1))}-{\tau}_n+2t)
\ll \frac{e^{2t} \sqrt{\log n}}{\sqrt{n}}.
\]
Therefore,
\[
\|\mathbb{K}_n|_{{B}(t)}\|_2 \lesssim \frac{e^{t}(\log n)^{1/4}}{n^{1/4}}.
\]
The proof is completed now. 
\end{proof}

\begin{proof}[{\bf Proof of Theorem \ref{minBerry}}] 
Recall $\ell_1(n)=\frac{1}{4}\log\log n$ and $\ell_2(n)=(\log n)^{1/4}.$ First, \eqref{Trleft} gives 
$$e^{t}-{\rm Tr}(\mathbb{K}_n|_{{B}(t)})=e^t\frac{4\log\log n+4t-t^2+O(1)}{2{\tau}_n},$$ which is $o(1)$ uniformly for $-\ell_2(n)\le t\le \ell_1(n).$ 
Thus 
	$$\aligned |\exp(-{\rm Tr}(\mathbb{K}_n|_{{B}(t)}))-e^{-e^{t}}|&=e^{-e^{t}}|\exp(e^{t}-{\rm Tr}(\mathbb{K}_n|_{{B}(t)}))-1|\\
	&\sim e^{-e^{t}}|e^{t}-{\rm Tr}(\mathbb{K}_n|_{{B}(t)})|\\
	&\sim \frac{1}{2{\tau}_n}e^{-e^{t}+t}|4\log\log n+4t-t^2+O(1)|. 
\endaligned$$
	By \eqref{HSleft} and the inequality \eqref{comp}, we derive similarly as for $\widetilde{X}_n$ that  
$$\sup_{-\ell_2(n)\le t\le \ell_1(n)}|\mathbb{P}(\bar{F}_n(t)-e^{-e^{t}}|\sim\sup_{-\ell_2(n)\le t\le \ell_1(n)}|\exp(-{\rm Tr}(\mathbb{K}_n|_{{B}(t)}))-e^{-e^{t}}|.$$

Thus
\begin{align}\label{middleleft}
\sup_{-\ell_2(n)\le t\le \ell_1(n)}|\mathbb{P}(\bar{F}_n(t)-e^{-e^{t}}|
&\sim \frac{1}{2\tau_n}\sup_{-\ell_2(n)\le t\le \ell_1(n)} e^{-e^{t}+t}|4\log\log n+4t-t^2+O(1)| \notag\\
&\sim \frac{2\log\log n}{e \log n}.
\end{align}
For the final asymptotic equivalence, we rely on the following facts: $\sup_{t\in\mathbb R} e^{-e^{t}+t}|t|^k$ is finite for every fixed $k$, $\sup_{t\in\mathbb R} e^{-e^{t}+t}=1/e$, and ${\tau}_n\sim \log n$.
Now 
\begin{align}\label{leftleft}
	\sup_{t\le -\ell_2(n)}|\bar{F}_n(t)-e^{-e^{t}}|&\le \sup_{t\le -\ell_2(n)}(1-\bar{F}_n(t))+\sup_{t\le -\ell_2(n)}(1-e^{-e^{t}})\notag\\
	&=1-\bar{F}_n(-\ell_2(n))+1-e^{-e^{-\ell_2(n)}}\notag\\
&\asymp e^{-\ell_2(n)}\asymp \exp(-(\log n)^{1/4}).\end{align}
Similarly 
	\begin{align}\label{rightleft}
	\sup_{t\ge \ell_1(n)}|\bar{F}_n(t)-e^{-e^{t}}|&\le \sup_{t\ge \ell_1(n)}\bar{F}_n(t)+\sup_{t\ge \ell_1(n)}e^{-e^{t}}\notag\\
	&=\bar{F}_n(\ell_1(n))+e^{-e^{\ell_1(n)}}\notag\\
&\asymp e^{-e^{\ell_1(n)}}\asymp  \exp(-(\log n)^{1/4}).\end{align} 
Combining \eqref{middleleft}, \eqref{leftleft} and 
\eqref{rightleft} together, we get the desired Berry-Esseen bound 
$$\sup_{t\in\mathbb{R}}|\bar{F}_n(t)-e^{-e^{t}}|\sim\frac{2\log\log n}{e\log n}.$$
The proof for the Berry-Esseen bound is complete.  
Now we work on the large and moderate deviations for $\min_{1\le i\le n}|\sigma_i|.$ 

For \(t<t_1\), define
\[
{L}_{k}(t)=\int_{|z|\le t} \exp(-nV(|z|))\,|z|^{2k}\,d^2z,\qquad 0\le k\le n-1.
\]
Analogously to the spectral radius, we have the basic observation 
$$\aligned
\mathbb{P}(\min_{1\le i\le n}|\sigma_i|<t)=1-\mathbb{P}(\min_{1\le i\le n}|\sigma_i|>t)=1-\det({\rm I}-\mathbb{K}_n|_{\{|z|\le t}\}). \endaligned$$ 
Then, once $\operatorname{Tr}(\mathbb{K}_n|_{\{|z|\le t\}})=o(1),$ we will derive the following crucial asymptotic equivalence  
\begin{equation}\label{connection}
\mathbb{P}(\min_{1\le i\le n}|\sigma_i|<t)\sim \operatorname{Tr}(\mathbb{K}_n|_{\{|z|\le t\}})
=\sum_{k=0}^{n-1} \frac{{L}_{k}(t)}{J_{k}}
\end{equation}
once $\operatorname{Tr}(\mathbb{K}_n|_{\{|z|\le t\}})=o(1).$
The same saddle-point analysis yields as for $\widetilde{T}_k(t)$, uniformly for \(0\le k\le k_n\), it holds 
\begin{equation}\label{tildeLleft}
{L}_{k}(t)\sim 2\pi \exp(-n V(t)) t^{2k+2}\frac{1}{-n t V'(t)+2k+1 }.
\end{equation}
Thereby, it follows from \eqref{tuleft} and \eqref{Jknewtilde} that 
\[\aligned 
\frac{{L}_{k}(t)}{J_{k}}
&\sim \frac{\sqrt{V''(t_1)}}{\sqrt{2\pi n}(-V'(t))}
\exp(-nV(t)+nV(t_1))\,(\frac{t}{t_1})^{2k+1}(1+\frac{2k}{nt_1^2V''(t_1)})^{-2k-1}\\
&\sim \frac{\sqrt{V''(t_1)}}{\sqrt{2\pi n}(-V'(t))}
\exp(-nV(t)+nV(t_1))\,(\frac{t}{t_1})^{2k+1} \exp(-\frac{4k^2}{n t_1^2 V''(t_1)}). \endaligned 
\]
Summing over \(0\le k\le k_n\), the exponential factor is negligible, and the geometric series gives
\[
\sum_{k=0}^{k_n} \frac{{L}_{k}(t)}{J_{k}}
\sim  \frac{\sqrt{V''(t_1)}\,t_1 t}{\sqrt{2\pi n}(t_1^2-t^2)(-V'(t))}
\exp(-nV(t)+nV(t_1))
\]
as well as 
$$\sum_{k=k_n+1}^{n-1}\frac{{L}_{k}(t)}{J_{k}} \lesssim \frac{n{L}_{k_n}(t)}{J_{k_n}}\sim \sqrt{n}
\exp(-nV(t)+nV(t_1))\,(\frac{t}{t_1})^{2k_n+1} \exp(-\frac{4k_n^2}{n t_1^2 V''(t_1)}).$$ 
Now
$$\frac{\sum_{k=k_n+1}^{n-1}\frac{{L}_{k}(t)}{J_{k}}}{\sum_{k=0}^{k_n} \frac{{L}_{k}(t)}{J_{k}}}\sim n (\frac{t}{t_1})^{2k_n+1} \exp(-\frac{4k_n^2}{n t_1^2 V''(t_1)})\ll 1,$$ which implies 
$${\rm Tr}(\mathbb{K}_n|_{\{|z|\le t\}})\sim \sum_{k=0}^{k_n}\frac{{L}_{k}(t)}{J_{k}}\asymp \frac{1}{\sqrt{n}}\exp(-nV(t)+nV(t_1))\ll 1.$$   Then 
$$\mathbb{P}(\min_{1\le i\le n}|\sigma_i|<t)\sim \sum_{k=0}^{k_n} \frac{{L}_{k}(t)}{J_{k}}\sim \frac{\sqrt{V''(t_1)}\,t_1 t}{\sqrt{2\pi n}(t_1^2-t^2)(-V'(t))}
\exp(-nV(t)+nV(t_1)).$$
The large deviation is established.

Now let \(t=t_1-d_n\) with \(\frac{\sqrt{\log n}}{\sqrt{n}}\ll d_n\ll 1\). Inserting $t=t_1-d_n$ into \eqref{dnleft}, we have
\[
{L}_{k}(t_1-d_n)\sim \frac{2\pi \exp(-n V(t_1-d_n)) (t_1-d_n)^{2k+2}}{-n (t_1-d_n) V'(t_1-d_n)+2k+1 }.
\]
 
We use the Taylor expansion, the assumption $V'(t_1)=0$ and the restraint $\sqrt{\log n}/\sqrt{n}\ll d_n\ll 1$ to get 
$$-n (t_1-d_n) V'(t_1-d_n)+2k+1=n d_n t_1V''(t_1)+2k+1+O(nd_n^2)\sim t_1V''(t_1)n d_n $$
uniformly in \(0\le k\le k_n\). Thus, 
$$\frac{{L}_k(t_1-d_n)}{J_k}\sim \frac{(1-\frac{d_n}{t_1})^{2k+1}}{\sqrt{2\pi V''(t_1) n d_n} }\exp(-nV(t_1-d_n)+n V(t_1)-\frac{4k^2}{t_1^2V''(t_1) n}).$$
Similarly as above, we have 
\begin{align}\label{dnleft} \mathbb{P}(\min_{1\le i\le n}|\sigma_i|\le t_1-d_n)&\sim \sum_{k=0}^{n-1}\frac{(1-\frac{d_n}{t_1})^{2k+1}}{\sqrt{2\pi V''(t_1) n }d_n }\exp(-nV(t_1-d_n)+n V(t_1)) \notag\\
&\sim \frac{t_1}{2\sqrt{2\pi V''(t_1) n }d_n ^2} \exp(-nV(t_1-d_n)+n V(t_1)).\end{align}

When \(\frac{\sqrt{\log n}}{\sqrt{n}}\ll d_n\ll n^{-1/3}\), the asymptotic \eqref{dnleft}  is improved to 
\begin{equation}\label{moderateleft}
\mathbb{P}(\min_{1\le i\le n}|\sigma_i|\le t_1-d_n)
\sim \frac{t_1}{2\sqrt{2\pi V''(t_1) n} \, d_n^2 }
\exp(-\frac{1}{2}V''(t_1) n  d_n^2).
\end{equation}

For the particular choice \(d_n=\frac{t\sqrt{{\tau}_n}}{\sqrt{V''(t_1) n}}\) with $t>1$, we will have $$-n (t_1-d_n) V'(t_1-d_n)+2k+1\sim n d_n t_1V''(t_1)(1+\frac{2k}{t_1V''(t_1) n d_n}).$$ The extra term \(\frac{2k}{t_1 V''(t_1)n d_n}\) does not contribute to the leading order as for the spectral radius, so \eqref{moderateleft} remains valid. Substituting the explicit form of \(d_n\) yields
\[
\mathbb{P}(\min_{1\le i\le n}|\sigma_i|<t_1- \frac{t\sqrt{{\tau}_n}}{\sqrt{V''(t_1) n}})
\sim t^{-2}(\frac{8\pi (\log n)^2}{V''(t_1) n t_1^2})^{\frac{t^2-1}{2}}.
\]
Finally, if we replace \( \frac{t\sqrt{{\tau}_n}}{\sqrt{V''(t_1) n}}\) by \(\frac{\sqrt{{\tau}_n}}{\sqrt{V''(t_1) n}}(1+\frac{v_n}{\tau_n})\) (with \(1\ll v_n\ll \tau_n\)) in \eqref{moderateleft}, simple calculus gives
\[
\mathbb{P}(\min_{1\le i\le n}|\sigma_i|<t_1-\frac{\sqrt{{\tau}_n}}{\sqrt{V''(t_1) n}}+v_n)
\sim \exp(-v_n-\frac{v_n^2}{2{\tau}_n}).
\]
The proof is complete.

\end{proof}

\end{document}